\theoremstyle{plain}
\newtheorem{Theorem}{Theorem}
\newtheorem{Definition}{Definition}
\newtheorem{Corollary}{Corollary}[Theorem]
\newtheorem{Lemma}{Lemma}
\newtheorem{Assumption}{Assumption}
\newtheorem{Model}{Model}
\newcommand\norm[1]{\left\lVert#1\right\rVert}
\crefname{hypothesis}{Hypothesis}{Hypotheses}
\title{Sharper Bounds for Proximal Gradient Algorithms with Errors\thanks{{Work supported by UK's EPSRC (EP/T026111/1, EP/S000631/1), and the MOD University Defence     Research Collaboration.}}}
\author{Anis Hamadouche\thanks{Anis Hamadouche, Yun Wu, Andrew M.\ Wallace, and Jo\~ao F.\ C.\ Mota     are with the School of Engineering \& Physical Sciences, Heriot-Watt University, Edinburgh EH14 4AS,     UK. (e-mail: \{ah225,y.wu,a.m.wallace,j.mota\}@hw.ac.uk).}
\and Yun Wu\footnotemark[2]
\and Andrew M.\ Wallace\footnotemark[2]
\and Jo\~ao F.\ C.\ Mota\footnotemark[2]}
\newcommand*{\addFileDependency}[1]{% argument=file name and extension
  \typeout{(#1)}% latexmk will find this if $recorder=0 (however, in that case, it will ignore #1 if it is a .aux or .pdf file etc and it exists! if it doesn't exist, it will appear in the list of dependents regardless)
  \@addtofilelist{#1}% if you want it to appear in \listfiles, not really necessary and latexmk doesn't use this
  \IfFileExists{#1}{}{\typeout{No file #1.}}% latexmk will find this message if #1 doesn't exist (yet)
}
\newcommand*{\myexternaldocument}[1]{%
    \externaldocument{#1}%
    \addFileDependency{#1.tex}%
    \addFileDependency{#1.aux}%
}
\begin{document}

\maketitle

% REQUIRED
\begin{abstract}
    \textbf{We analyse the convergence of the proximal gradient algorithm for convex composite problems in the presence of gradient and proximal computational inaccuracies. We derive  new tighter deterministic and probabilistic bounds that we use to verify a simulated (MPC) and a synthetic (LASSO) optimization problems solved on a reduced-precision machine in combination with an inaccurate proximal operator. We also show how the probabilistic bounds are more robust for algorithm verification and more accurate for application performance guarantees. Under some statistical assumptions, we also prove that some cumulative error terms follow a martingale property. And conforming to observations, e.g., in \cite{schmidt2011convergence}, we also show how the acceleration of the algorithm amplifies the gradient and proximal computational errors.}
\end{abstract}

% REQUIRED
\begin{keywords}
  Convex Optimization, Proximal Gradient Descent, Approximate Algorithms
\end{keywords}

% REQUIRED
\begin{AMS}
  49M37, 65K05, 90C25
\end{AMS}

\section{Introduction}
\label{Sec:Intro}
Many problems in science and engineering can be posed as \textit{composite optimization
problems}:
\begin{align}
  \label{Eq:Problem}
  \underset{x \in \mathbb{R}^n}{\text{minimize}} \,\,\,
  f(x) := g(x) + h(x)\,,
\end{align}
where the function $g\,:\, \mathbb{R}^{n} \to \mathbb{R}$ is real-valued
and differentiable, and the function $h:\mathbb{R}^n \to \mathbb{R} \cup
\{+\infty\}$ is not necessarily differentiable and is possibly infinite-valued,
enabling the inclusion of hard
constraints in~\eqref{Eq:Problem}. Examples
include various machine learning frameworks, e.g., logistic regression and
support vector machines \cite{cortes1995support}, sparse regression and inference
\cite{quinonero2005unifying,lawrence2003fast,lawrence2001sparse}, image processing \cite{afonso2010fast}, and discrete
optimal control \cite{nagahara2015maximum}. 

A popular class of algorithms to solve~\eqref{Eq:Problem} is
\textit{proximal gradient methods} \cite{beck2017first} which, in each
iteration, take a gradient step using the function $g$ and, subsequently,
evaluate the proximal operator of the function $h$ at the resulting point. Such
algorithms have been widely studied under different contexts, and several
guarantees have been established, both in the
convex~\cite{beck2009fast,beck2017first,bertsekas2015convex,combettes2005signal,palomar2010convex}
and nonconvex~\cite{bolte2018first,ochs2019non} cases. Stochastic versions of
the proximal gradient algorithm have also been proposed and shown to converge
in convex and nonconvex settings, e.g.,  
\cite{atchade2014stochastic,atchade2014stochastic,zhou2018distributed,nitanda2014stochastic,rosasco2019convergence,
davis2016sound,zhou2016convergence}.

All of these results, however, assume that computations are performed with
near-infinite precision, which is unrealistic when the computational platform
has limitations in power, precision, or both. Examples include applications that are associated with sensing and control of autonomous platforms, often using FPGAs or other finite precision computational hardware. With these applications in mind, we
analyze proximal gradient methods when both the gradient and the proximal
operator are computed approximately at each iteration, and obtain tight
performance bounds.
\subsection{Problem statement}
We consider a convex instantiation of~\eqref{Eq:Problem}. Namely, we assume that $g\,:\,
\mathbb{R}^{n} \to \mathbb{R}$ is a convex, differentiable function, and has a
Lipschitz-continuous gradient with constant $L > 0$, i.e., $\|\nabla g(x) - \nabla
g(y)\|_2 \leq L \|x - y\|_2$, for all $x$, $y \in \mathbb{R}^n$. We also assume that $h:\mathbb{R}^n \to \mathbb{R} \cup
\{+\infty\}$ is closed, proper, and convex. In this case, given arbitrary initial $x^1 = x^0 \in \mathbb{R}^n$, the accelerated proximal gradient
descent
algorithm applied to~\eqref{Eq:Problem} consists of iterating for $k>1$
\begin{equation}
  \label{Eq:PGNoiseless}
  \begin{split}
    y^k &= x^k +\beta_k(x^k-x^{k-1})
    \\
    x^{k+1} &=\text{prox}_{s_{k} h}\big(y^{k} - s_{k}\nabla g(y^{k})\big), 
  \end{split}
\end{equation}
where $0 < \beta_k \leq 1$ is the \textit{momentum} at
iteration $k$, which we assume takes the form $\beta_k = (\alpha_{k-1}-1)/\alpha_k$, where $\{\alpha_k\}_{k\geq1}$ is a given parameter sequence satisfying $\alpha_0 = 1$,  $\alpha_k \geq 1$, and $\alpha_{k}^2-\alpha_{k} = \alpha_{k-1}^2$ for all $k \geq 1$. As an example, we can use $\alpha_{k}=(k+2)/2$. The stepsize $s_{k}$ at iteration $k$ satisfies $0 < s_{k} \leq 1/L$, and
\begin{equation}
  \label{Eq:Prox}
    \text{prox}_{u}(y) 
    :=
    \underset{x \in \mathbb{R}^n}{\arg\min}\,\,\, 
    u(x) + \frac{1}{2}\|x - y\|_2^2
\end{equation}
is the proximal operator of $u:\mathbb{R}^n\to\mathbb{R}\cup \{+\infty\}$ at $y
\in \mathbb{R}^n$. The special case of $\beta_k = 0$ corresponds to the \textit{basic (unaccelerated) proximal gradient} algorithm 
\begin{equation}
  \label{Eq:BasicPGNoiseless}
    x^{k+1} =\text{prox}_{s_{k} h}\big(x^{k} - s_{k}\nabla g(x^{k})\big).
\end{equation}

We consider the case in which both the gradient of $g$
in \eqref{Eq:PGNoiseless} and~\eqref{Eq:BasicPGNoiseless} and the proximal operator~\eqref{Eq:Prox} are
computed approximately at each iteration.  Specifically, we consider the
\textit{approximate accelerated proximal gradient} algorithm
\begin{equation}
    \label{Eq:AcceleratedPG}
    \begin{split}
        y^k &= x^k +\beta_k(x^k-x^{k-1}),\\
        x^{k+1} 
        &\in 
        \text{prox}_{s_{k} h}^{\epsilon_{2}^{k}}
        \Big[y^{k} - s_{k}\big(\nabla g(y^{k})+\epsilon_1^{k}\big)\Big]\,,
    \end{split}
\end{equation}
and its original \textit{approximate basic (unaccelerated) proximal gradient} algorithm
\begin{equation}
    \label{Eq:UnacceleratedPG}
    x^{k+1} 
    \in 
    \text{prox}_{s_{k} h}^{\epsilon_{2}^{k}}
    \Big[x^{k} - s_{k}\big(\nabla g(x^{k})+\epsilon_1^{k}\big)\Big]\,
\end{equation}
where $\epsilon_1^{k} \in \mathbb{R}^n$ and $\epsilon_2^{k} \in \mathbb{R}_+$
model, respectively, the error in the gradient and the error in the computation
of the proximal operator at
iteration $k$. In~\eqref{Eq:AcceleratedPG} and ~\eqref{Eq:UnacceleratedPG}, $\text{prox}_{s_{k} h}^{\epsilon_{2}^{k}}(y)$ is the set of vectors that are $\epsilon_2^k$-suboptimal in the computation of
the proximal operator of $s_k h$ at a point $y \in \mathbb{R}^n$: %For a generic function $u: \mathbb{R}^n \to \mathbb{R} \cup \{+\infty\}$, we define $\text{prox}_{\zeta u}^\epsilon$ at point $y \in \mathbb{R}^n$, where $\zeta > 0$, as

\begin{equation}
  \label{Eq:ProxApproximate}    
    \text{prox}_{u}^{\epsilon}(y) 
    := 
    \Big\{
      x \in \mathbb{R}^n\,:\,
      u(x) + \frac{1}{2}\|x -y\|_2^2 \leq \epsilon + \underset{z}{\inf}\,\,
      u(z) + \frac{1}{2}\|z -
      y\|_2^2 
    \Big\}\,,
\end{equation}
which we will denote as the \textit{$\epsilon$-suboptimal} proximal of $u:\mathbb{R}^n\to\mathbb{R}\cup \{+\infty\}$ at $y
\in \mathbb{R}^n$.

While standard proximal gradient methods~\eqref{Eq:PGNoiseless} and~\eqref{Eq:BasicPGNoiseless} converge to a solution of~\eqref{Eq:Problem}
provided the stepsize $s_{k}$ is small enough, approximate
proximal gradient algorithms [\eqref{Eq:AcceleratedPG} and ~\eqref{Eq:UnacceleratedPG}] require, in addition, that the approximation
errors $\epsilon_1^k$ and $\epsilon_2^{k}$ satisfy some additional convergence criteria, for example, that they converge to zero along the iterations. %converge to zero at a fast enough rate. 

Our goal is then \textit{to characterize the convergence of the approximate
proximal gradient [\eqref{Eq:AcceleratedPG} and ~\eqref{Eq:UnacceleratedPG}] to a solution
of~\eqref{Eq:Problem}}. 
Differently from prior work, we assume not only deterministic
errors, but also probabilistic ones, according to models suited to approximate
computing. 

\subsection{Our approach}
In the case of deterministic errors, we get inspiration
from~\cite{beck2017first} to derive, using simple arguments, upper bounds on
$f(x^k)$ throughout the iterations. The resulting bounds are simpler and
tighter than other bounds \cite{schmidt2011convergence},\cite{aujol2015stability}. %, and also decouple the contribution of the two types of errors, $\epsilon_1^k$ and $\epsilon_2^k$
In the case of probabilistic errors, our arguments rely on
concentration of measure results for martingale sequences and bypass
the need to assume that $\epsilon_1^k$ and $\epsilon_2^k$ converge to zero. We
believe this line of reasoning is novel in the analysis of approximate proximal gradient
algorithms. 

\subsection{Applications}
In order to validate our convergence results, we use the proposed error bounds to analyse the convergence of \eqref{Eq:AcceleratedPG} and \eqref{Eq:UnacceleratedPG} when applied to Model Predictive Control (MPC) \cite{hegrenaes2005spacecraft} with different levels of injected gradient and proximal computation errors. We also apply the same set of bounds to analyse the \textit{proximal gradient} algorithm for solving randomly generated LASSO problems \cite{tibshirani1996regression}. For the latter, instead of generating the errors from a known distribution as in the MPC test, we use the developed benchmark \cite{hamadouche2021sspd} to vary the fixed-point machine representation and the proximal computation precision so that we obtain more realistic error sequences.

\subsection{Contributions}
We summarize our contributions as follows:
\begin{itemize}

  \item 
    We establish convergence bounds for the proximal gradient algorithm with
    deterministic and probabilistic errors. Our bounds are simpler than prior
    bounds. %and disentangle the contribution of the errors in the computation of the gradient vector and of the proximal operator. 
  \item We conduct experiments on a discrete model predictive control problem to verify the sharpness of
    our bounds and compare them with the bounds
    in~\cite{schmidt2011convergence}. The models for the errors are inspired by
    approximate computing techniques suited for
    low-precision machines, such as reduced-precision accelerators on FPGA and battery-operated devices, in which  algorithms are typically run approximately in order to save processing time and/or power. We also run experiments on a real benchmark that uses fixed-point arithmetic and tunable CVX solver precision \cite{grant2009cvx}.
  \item We propose new models for the proximal and gradient errors
  that satisfy martingale properties in consistence with experimental results.
\end{itemize}

\subsection{Organization}
We start by reviewing prior work in Section~\ref{Sec:RelatedWork}. In
Section~\ref{Sec:MainResults}, we then describe our model, state our assumptions, and
present the main results. The proofs are in Section~\ref{Sec:Proofs}, and some
auxiliary results are relegated to the appendix.
Section~\ref{Sec:Experiments} then describes our experimental results, and we
conclude in Section~\ref{Sec:Conclusion}.

\section{Related Work}
\label{Sec:RelatedWork}
%\anis{Short summary of the section}
\subsection{Origins of proximal gradient}
Many optimization algorithms have been developed to address large-scale problems arising in data science and machine learning applications. For instance, gradient methods, which use the gradient of the function as a search direction to iteratively find points with lower (or larger) cost, are suited
 to smooth convex problems with simple and typically inexpensive gradient calculation. For constrained problems, projected gradient methods \cite{lin2007projected} involve an extra projection step onto the feasible set. The first instance of a gradient method can be traced back to Louis Augustin Cauchy \cite{cauchy1847methode}, who suggested the use of derivatives to solve optimization problems in 1847 \cite{lemarechal2012cauchy}. The convergence of the resulting method for nonlinear optimization problems, however, was established only in 1944. \cite{curry1944method}.

Subgradient and projected subgradient methods, originally developed by Shor, in the 1970s \cite{shor2012minimization}, generalize gradient and projected gradient methods for the case in which the objective functions are not differentiable (but still convex) \cite{akgul1984topics,boyd2003subgradient,beck2017first}.

The proximal operator \eqref{Eq:Prox} generalizes the projection operator \cite{moreau1965proximite}. Specifically, if we set $u(x)$ in  \eqref{Eq:Prox} as the indicator function of a (convex) set $S \subset \mathbb{R}^n$, $\text{prox}_u(y)$ becomes the projection of $y$ onto $S$. Proximal splitting algorithms iteratively apply the proximal operator of a function in combination with the gradient or proximal operator of other functions, which often results in simple algorithms with tolerable per-iteration complexity \cite{aujol2019rates}. It is surprising that the proximal gradient algorithm \eqref{Eq:BasicPGNoiseless} applied to composite problems \eqref{Eq:Problem} has the same convergence rate as the classical gradient algorithm [i.e., when $h(x)=0$ in \eqref{Eq:Problem}], which applies only to much simpler problems. In particular, in both problems the objective function decreases along the iterations $k$ as $O(1/k)$ \cite{nesterov2013gradient, beck2017first, beck2009fast}. 

According to \cite{nesterov1983method}, it is also possible to accelerate gradient-based methods to achieve higher convergence rates by evaluating the gradient at a linear combination of two consecutive iterates,  in~\eqref{Eq:PGNoiseless}. 
The accelerated proximal gradient descent algorithm~\eqref{Eq:PGNoiseless} was applied in \cite{beck2009fast} to LASSO, and the resulting algorithm, famously known as FISTA, was shown to converge at a rate of $O(1/k^2)$. The same rate applies to general convex problems \cite{nesterov1983method,tseng2008accelerated, beck2009fast,nesterov2013gradient,beck2017first}.

\subsection{Stochastic proximal gradient}
The iterations of stochastic proximal gradient are exactly as in standard proximal gradient \eqref{Eq:PGNoiseless} or~\eqref{Eq:BasicPGNoiseless}, but the gradient of $g$ is computed approximately in order to save computation or to avoid retrieving all the points in a database. Specifically, in many applications, $g$ is a sum of functions each of which depends on one (or a few) datapoints of a given dataset. For example, $g(x) = \sum_{i=1}^m g_i(x)$, where $m$ is the number of points in the dataset and $g_i\,:\, \mathbb{R}^n \to \mathbb{R}$ measures the error of a model on the $i$th point of the dataset. As $\nabla g(x^k) = \sum_{i=1}^m \nabla g_i(x)$, proximal gradient in \eqref{Eq:PGNoiseless} or \eqref{Eq:BasicPGNoiseless} requires visiting all the points of the dataset at each iteration, which can be time-consuming. To overcome this, stochastic proximal gradient approximates the sum $\sum_{i=1}^m \nabla g_i(x^k)$ at iteration $k$ by $\sum_{i\in \mathcal{S}_k} \nabla g_i(x^k)$, where $\mathcal{S}_k$ is a random, but small, subset of $\{1, \ldots, m\}$. Errors in stochastic proximal gradient thus stem from approximating a sum of gradients with a truncated sum. The problem we address is more general than the one addressed by stochastic proximal gradient, as we do not necessarily assume that $g$ in \eqref{Eq:Problem} is additive, i.e., $g(x) = \sum_{i=1}^m g_i(x)$.

In a recent convergence analysis of the stochastic proximal gradient algorithm,  \cite{rosasco2019convergence} considered stochastic perturbations (realizations of a Gaussian random variable with $0$ mean) of the gradient and bypassed the need to make the assumption that the gradient error is summable, i.e., $\sum_{i=1}^{k}\alpha_i\sqrt{\mathbb{E}\big[\norm{\epsilon_1^i}_2^2\big]} < \infty$, where $\{\alpha_i\}$ is a sequence related to the acceleration momentum $\beta_i$, and $\mathbb{E}[\cdot]$ represents the expected value of a random variable. Note that $\epsilon_1^k$ here results from replacing the exact gradient by a stochastic estimate \cite{rosasco2019convergence, atchade2014stochastic} rather than using finite-precision computations, as in our case. In our analysis we adopt a relaxation step different from  \cite{rosasco2019convergence} and obtain  more realistic upper bounds on the function values.

In our work we consider the deterministic proximal gradient algorithm with perturbed gradient as in \cite{schmidt2011convergence, aujol2019rates}, whose convergence proofs follow a slightly different line than the proofs of stochastic proximal gradient algorithm \cite{rosasco2019convergence, atchade2014stochastic}. Moreover, instead of implicitly assuming the availability of the closed-form expression of the proximal operator as in \cite{rosasco2019convergence, atchade2014stochastic, aujol2019rates}, we consider additional deterministic and random perturbations in the proximal operation step, as in \cite{schmidt2011convergence}, and analyse the manifestation of both types of errors. We also establish probabilistic upper bounds (see Theorem \ref{prop:2}, \ref{prop:3} and \ref{Theorem2}). Our analysis is thus fundamentally different from the analyses in \cite{schmidt2011convergence} and \cite{aujol2019rates}, both of which assumed deterministic error models.

Without any assumption on the decay of the error terms, if we use probabilistic error models and set $\epsilon_{2_\Omega}^k = 0$ in~\eqref{Eq:AcceleratedPG}, for all $k > 0$, for a given probability sample space $\Omega$, and we use inexact gradient computations $\nabla^{\epsilon_1^k}g$, then we retrieve the same worst case convergence rates of \cite{atchade2017perturbed} and \cite{rosasco2019convergence}, i.e,  $O({1}/{\sqrt{k}})$, but with better coefficients.
Similar conclusions to the ones in \cite{aujol2019rates,atchade2014stochastic,atchade2017perturbed} would follow if we further assume that $\{\alpha_k\epsilon_{1_\Omega}^{k}\}$ is summable, which is a weaker requirement than the square summability of \cite{aujol2019rates}, thereby the recovery of the optimal rate $O({1}/{k^2})$ in the approximate accelerated proximal gradient algorithm.

\subsection{Approximate proximal gradient}
One year after the seminal work in \cite{beck2009fast}, it was shown that the same nearly optimal rates can still be achieved when the computation of the gradients and proximal operators are approximate \cite{schmidt2011convergence}. This variant is known as the \textit{approximate} proximal gradient algorithm. The analysis in \cite{schmidt2011convergence} requires the errors $\epsilon_1^k$ and $\epsilon_2^k$ to decrease with iterations $k$ at rates $O({1}/{k^{\varsigma + 1}})$ for the basic proximal gradient~\eqref{Eq:UnacceleratedPG}, and $O({1}/{k^{\varsigma + 2}})$ for the accelerated proximal gradient~\eqref{Eq:AcceleratedPG}, for any $\varsigma > 0$, in order to satisfy the summability assumptions of both error terms. The work in \cite{schmidt2011convergence} established the following ergodic convergence bound in terms of function values of the averaged iterates for the basic approximate proximal gradient \eqref{Eq:UnacceleratedPG}:
  \begin{equation}
    \label{schmidt1}
    \begin{split}
    &f\bigg(\frac{1}{k}\sum_{i=1}^{k}x^{i}\bigg)  
    -
    f(x^\star) 
    \leq 
    \frac{L}{2k}
    \Big[\norm{x^\star-x^0}_{2}+ 2A_k + \sqrt{2B_k}\Big]^2\\
    &\quad A_k = \sum_{i=1}^{k}\Big(\frac{\|\epsilon_1^{i}\|_2}{L}+\sqrt{\frac{2\epsilon_2^{i}}{L}}\Big), \quad B_k = \sum_{i=1}^{k} \frac{\epsilon_2^{i}}{L},
    \end{split}
  \end{equation}
where $x^\star$ is an optimal solution of \eqref{Eq:Problem}, $L$ is the \textit{Lipschitz} constant of the gradient, and $x^0$ is the initialization vector. The same work also analyzed the \textit{inexact accelerated approximate  proximal gradient}~\eqref{Eq:AcceleratedPG} and obtained the following convergence result in terms of the function values of the iterates,
  \begin{equation}
    \label{schmidt2}
    \begin{split}
    &f\big(x^{i}\big)  
    -
    f(x^\star) 
    \leq 
    \frac{2L}{(k+1)^2}
    \Big[\norm{x^\star-x^0}_{2}+ 2\tilde{A}_k + \sqrt{2\tilde{B}_k}\Big]^2\\
    &\quad \tilde{A}_k = \sum_{i=1}^{k} i \Big(\frac{\|\epsilon_1^{i}\|_2}{L}+\sqrt{\frac{2\epsilon_2^{i}}{L}}\Big), \quad \tilde{B}_k = \sum_{i=1}^{k} \frac{i^2\epsilon_2^{i}}{L}.
    \end{split}
  \end{equation}
This is the most closely related work to ours; however, our work derives similar, yet sharper, convergence bounds for the \textit{inexact proximal gradient} algorithm. In addition, we derive probabilistic bounds that can be estimated before running the algorithm for given bounded proximal and gradient errors. Specifically, the bounds constants can be computed from the machine representation and software solver tolerances (for the computation of the proximal operator).

The work in \cite{aujol2015stability} extended the analysis of \cite{schmidt2011convergence} to a more general momentum parameter selection (and thus to a different algorithm) $\alpha_k =({(k+a-1)}/{a})^d$,  where $d \in [0,1] $ and $a > \text{max}(1,(2d)^\frac{1}{d})$, which becomes FISTA \cite{beck2009fast} when $d=1$. The works in \cite{aujol2015stability, villa2013accelerated} also considered two different types of approximation in the proximal operator computation. For example,  \cite[Proposition 3.3]{aujol2015stability} makes assumptions similar to ours, but establishes different bounds. The same paper also suggests slowing down the over-relaxations of FISTA to stabilize the algorithm and shows how to obtain a better trade-off between acceleration and error amplification by controlling the approximation errors. In contrast, we show that the basic approximate proximal gradient algorithm \eqref{Eq:UnacceleratedPG} converges to a constant predictable residual without any assumptions on the gradient error terms (see Theorem \ref{prop:3}). We also show that errors in the accelerated proximal gradient method cause the algorithm to eventually diverge as $O(k)$ in the worst case scenario, but converge sub-optimally, i.e., to a constant error term, using stronger assumptions on the proximal error and under a standard suitable choice of the momentum sequence $\{\beta_k\}$.
\section{Main Results}
\label{Sec:MainResults}
Before stating our convergence guarantees for the approximate proximal gradient algorithm, we specify our
assumptions and describe the class of algorithms that our analysis covers.
\subsection{Setup and algorithms}
Recall that we aim to solve convex \textit{composite optimization problems} with the
format of~\eqref{Eq:Problem}, repeated here for convenience:
\begin{align}
  \label{Eq:ProblemRepeated}
  \underset{x \in \mathbb{R}^n}{\text{minimize}} \,\,\,
  f(x) := g(x) + h(x)\,.
\end{align}
All of our results assume the following:
\begin{Assumption}[Assumptions on the problem]
  \label{Ass:OptimizationProblem}
  \hfill    
            
  \medskip  
  \noindent
  \begin{itemize}

    \item The function $h:\mathbb{R}^n \to \mathbb{R} \cup \{+\infty\}$ is
      closed, proper, and convex.

    \item The function $g\,:\, \mathbb{R}^{n} \to \mathbb{R}$ is convex and
      differentiable,
      and its gradient $\nabla g\,:\, \mathbb{R}^n \to \mathbb{R}^n$ is
      Lipschitz-continuous with constant $L > 0$, that is,
      \begin{equation}
        \label{Eq:LipschitzContinuity}
        \big\|\nabla g(y) - \nabla g(x)\big\|_2 \leq L\big\|y - x\big\|_2\,,
      \end{equation}
      for all $x$, $y \in \mathbb{R}^n$, where $\|\cdot\|_2$ stands for the
      standard Euclidean norm. 

    \item The set of optimal solutions of~\eqref{Eq:ProblemRepeated} is nonempty: 
      \begin{equation}
        \label{Eq:NonemptySetSolutions}
        X^\star := \big\{x \in \mathbb{R}^n\, :\, f(x) \leq
        f(z),\,\,\text{\emph{ for all} $z \in \mathbb{R}^n$}\big\} \neq
        \emptyset\,.
      \end{equation}
    % \item We further assume that the norm of the iterative difference $\norm{x^{k}-x^{k-1}}_2$ is summable. %to check
  \end{itemize}
\end{Assumption}
The above assumptions are standard in the analysis of proximal gradient algorithms and are
actually required for convergence to an optimal solution from an arbitrary
initialization \cite{beck2017first,bertsekas2015convex}. 

A consequence of~\eqref{Eq:LipschitzContinuity} that we will
often use in our results is that~\cite[Lem.\
1.2.3]{Nesterov04-IntroductoryLecturesConvexOptimization}
\begin{equation}
  \label{Eq:LipschitzContinuityAlt}
  g(y) \leq g(x) + \nabla g(x)^\top (y-x) + \frac{L}{2}\|y - x\|_2^2\,,
\end{equation}
for any $x$, $y \in \mathbb{R}^n$. Also, as $h$ is closed, proper, and convex,
the function $z \mapsto h(z) + (1/2)\|z - y\|_2^2$ is coercive, which implies
that the set defining the approximate proximal set
in~\eqref{Eq:ProxApproximate} is nonempty for all $\epsilon \geq 0$, and $y \in \mathbb{R}^n$:
\begin{equation*}
  %\label{Eq:ProxApproximateNonempty}    
    \text{prox}_{h}^{\epsilon}(y) 
    := 
    \Big\{
      x \in \mathbb{R}^n\,:\,
      h(x) + \frac{1}{2}\|x -y\|_2^2 \leq \epsilon 
      + 
      \underset{z}{\inf}\,\,
      h(z) + \frac{1}{2}\|z -
      y\|_2^2 
    \Big\}
    \neq \emptyset
    \,.
\end{equation*}
When $\epsilon > 0$, this set may contain more than a single element, which
results in several possible instances of the accelerated approximate proximal gradient~\eqref{Eq:AcceleratedPG},
\begin{equation}
    \label{Eq:AcceleratedPGRepeated}
    \begin{split}
    &y^k = x^k +\beta_k(x^k-x^{k-1}),
    \\
    &x^{k+1} 
    \in 
    \text{prox}_{s_{k} h}^{\epsilon_{2}^{k}}
    \Big[y^{k} - s_{k}\big(\nabla g(y^{k})+\epsilon_1^{k}\big)\Big]\,,
    \end{split}
\end{equation}
whenever there exists a $k$ for which $\epsilon_2^{k} > 0$. However, as we
establish bounds on function values [i.e., $f(x^k)$], this ambiguity does not
affect our results. By setting $\beta_k = 0$, \eqref{Eq:AcceleratedPGRepeated} reduces to the basic approximate proximal gradient scheme, i.e.,
\begin{equation}
    \label{Eq:PGDApproximateRepeated}
    x^{k+1} 
    \in 
    \text{prox}_{s_{k} h}^{\epsilon_{2}^{k}}
    \Big[x^{k} - s_{k}\big(\nabla g(x^{k})+\epsilon_1^{k}\big)\Big].
\end{equation}
\subsection{Error models and assumptions}
In what follows we consider two models for the gradient error $\epsilon_1$.
\begin{Model}[Absolute Error Model]
\label{ErrModel1}
Under this model, each evaluation of the gradient of $g$ at a point $x$ is subject to additive noise $\epsilon_1$ whose magnitude is independent from the entries of $x$. Specifically, each evaluation of the gradient of $g$ in~\eqref{Eq:ProblemRepeated} yields
\begin{equation}
    \label{Eq:Model1}
    \nabla g^{\epsilon_1}(x) = \nabla g (x)+ \epsilon_1
\end{equation}
where
\begin{equation}
    |\epsilon_1| \leq \delta \mathbf{1}_n. 
    \label{Eq:Model1UpperDelta}
\end{equation}
$\mathbf{1}_n$ is an $n$-dimensional vector of ones, and $\delta$ is a positive scalar. $|.|$ stands for the vector componentwise absolute value. This can be used, for example, to model fixed-point representation approximations (see Appendix~\ref{AppendixFixedPoint}).
\end{Model}
\begin{Model}[Relative Error Model]
\label{ErrModel2}
Under this model, each evaluation of the gradient of $g$ at a point $x$ is subject to additive noise $\epsilon_1$ whose magnitude is proportional to the magnitude of the gradient $|\nabla g(x)|$. Specifically, the gradient of $g$ in \eqref{Eq:ProblemRepeated} is approximated by
\begin{equation}
    \label{Eq:Model2}
    \nabla g^{\epsilon_1}(x) = \nabla g (x)+ \epsilon_1
\end{equation}
where
\begin{align}
    & |\epsilon_1| \leq \delta|\nabla g(x)|. \label{Eq:Model2UpperDeltaNG}
\end{align} 
$\delta$ is a positive scalar, and $|.|$ stands for the vector componentwise absolute value. This can be used, for example, to model floating-point representation approximations (see Appendix \ref{AppendixFloatingPoint}).
\end{Model}
In both models the parameter $\delta$ is known as the machine precision.

For each of the above models, our analysis assumes two different scenarios:
\begin{enumerate}
  \item The sequences of errors $\{\epsilon_1^k\}_{k\geq 1}$ and
    $\{\epsilon_2^{k}\}_{k\geq 1}$ are deterministic, or

  \item The sequences of errors $\{\epsilon_1^k\}_{k\geq 1}$ and
    $\{\epsilon_2^{k}\}_{k\geq 1}$ are random, in which case we use
    $\epsilon_{1_\Omega}^k$ and $\epsilon_{2_\Omega}^k$ to denote the respective
    random vectors/variables of errors at iteration $k$, where $\Omega$ denotes the sample space of a given probability measure.
\end{enumerate}
In scenario 2, the sequences $\{x^k\}_{k \geq 1}$ and
$\{y^k\}_{k \geq 1}$ become random as well. And we also use $x^k_{_\Omega}$ and
$y^k_{_\Omega}$ to denote the respective random vectors at iteration $k$.
We make the following assumption in this case:
\begin{Assumption}
  \label{Assum:RandomVectors}
  In scenario 2, we assume that each random vector $\epsilon_{1_\Omega}^k$, for
  $k \geq 1$, satisfies
  \begin{subequations}
    \label{Eq:AssumptionsNoise}
    \begin{align}
      &\mathbb{E}\big[
      \epsilon_{1_\Omega}^k
      \,\big\vert\,
      \epsilon_{1_\Omega}^1, \ldots, \epsilon_{1_\Omega}^{k-1}
      \big]
      =
      \mathbb{E}\big[\epsilon_{1_\Omega}^k\big]
      =
      0\,,
      \label{Eq:AssumptionRandomZeroMean}
      \\
      &\mathbb{P}\big(|\epsilon_{1_\Omega j}^k| \leq \delta\big) = 1\,,\quad\quad\quad\quad
      \text{\emph{for all} $j = 1, \ldots, n$,}\quad(\text{Model 1})
      \label{Eq:AssumptionRandomBounded}
      \\
      &\mathbb{P}\big(|\epsilon_1| \leq \delta|\nabla g(x)|) = 1,\quad\quad
      \text{\emph{for all} $j = 1, \ldots, n$,}\quad(\text{Model 2})
      \label{Eq:AssumptionRandomBounded2}
      \\
      &\mathbb{E}\big[
      {\epsilon_{1_\Omega}^k }^\top x_{_\Omega}^k 
      \,\big\vert\,
      \epsilon_{1_\Omega}^1, \ldots, \epsilon_{1_\Omega}^{k-1},\,
      x_{1_\Omega}^1, \ldots, x_{1_\Omega}^{k-1}
      \big]
      =
      \mathbb{E}\big[
      {\epsilon_{1_\Omega}^k }^\top x_{_\Omega}^k 
      \big]
      =
      0\,,
      \label{Eq:AssumptionRandomInnerProduct}
    \end{align}
  \end{subequations}
  where  $\epsilon_{1_\Omega j}^k$ in~\eqref{Eq:AssumptionRandomBounded} denotes
  the $j$-th entry of $\epsilon_{1_\Omega}^k$, and $\delta > 0$ is the machine precision. 
\end{Assumption}
The first assumption, \eqref{Eq:AssumptionRandomZeroMean}, states that
$\epsilon_{1_\Omega}^k$ is independent from past realizations and has zero mean.
The second assumption, \eqref{Eq:AssumptionRandomBounded}, states that the
absolute value of each entry of $\epsilon_{1_\Omega}^k$ is bounded by $\delta$
almost surely. The third assumption, \eqref{Eq:AssumptionRandomBounded2}, states that the
absolute value of each entry of $\epsilon_{1_\Omega}^k$ is bounded by the corresponding entry of the gradient vector $\nabla g(x)$ scaled by $\delta$, almost surely. Finally, the fourth assumption, \eqref{Eq:AssumptionRandomInnerProduct}, states that the gradient error $\epsilon_{1_\Omega}^k$ and the iterate $x_{_\Omega}^k$ are orthogonal, and their inner product is conditional mean independent from past realizations. %\anis{[It's not clear to me that,   given~\eqref{Eq:AssumptionRandomZeroMean}, we really   need~\eqref{Eq:AssumptionRandomInnerProduct}. It's also not clear what the role   of $\epsilon_2^{k}$ is in~\eqref{Eq:AssumptionRandomInnerProduct}. For example,   if we assume realizations of $\epsilon_1^{k-1}$, $\epsilon_2^{k-1}$, and   $x^{k-1}$, then $x^k$ in~\eqref{Eq:AcceleratedPGRepeated} is completely specified, apart from belonging to a set]}. 

\begin{Assumption}
Let $\{x^k\}$ denote the sequence produced by~\eqref{Eq:AcceleratedPGRepeated} or~\eqref{Eq:PGDApproximateRepeated}. We define the residual error vector at iteration $k$ as
\begin{equation}
\label{Eq:ResidualVec}
r^k = x^k - \overline{x}^{k},   
\end{equation}
where $\overline{x}^{k}$ stands for the proximal error-free iterate
\begin{equation}
  \overline{x}^{k+1} 
  :=
  \text{prox}_{s h}
  \Big(x^k - s\big(\nabla g(x^k) + \epsilon_1^{k}\big)\Big).
\end{equation}
In scenario 2,  we assume
  \label{Assum:RandomVectors2}
  \begin{subequations}
    \label{Eq:AssumptionsResidual}
    \begin{align}
      &
      \mathbb{E}\big[
      r_{_\Omega}^k
      \,\big\vert\,
      r_{_\Omega}^1, \ldots, r_{_\Omega}^{k-1}
      \big]
      =
      \mathbb{E}\big[r_{_\Omega}^k\big]
      =
      0\,,
      \label{Eq:ResidAssumptionRandomZeroMean}
      \\
      &
      \mathbb{E}\big[
      {r_{_\Omega}^k }^\top x_{_\Omega}^k 
      \,\big\vert\,
      r_{_\Omega}^1, \ldots, r_{_\Omega}^{k-1},\,
      x_{1_\Omega}^1, \ldots, x_{1_\Omega}^{k-1}
      \big]
      =
      \mathbb{E}\big[
      {r_{_\Omega}^k }^\top x_{_\Omega}^k 
      \big]
      =
      0\,.
      \label{Eq:ResidAssumptionRandomInnerProduct}
    \end{align}
  \end{subequations}
Note that Lemma \ref{Lem:BackwardProxError}, stated in the appendix, bounds the norm of the residual vector $\big\|r^k\big\|_2$ as a function of $\epsilon_{2}^k$; therefore, bounding $\epsilon_{2}^k$ implies bounding $\big\|r^k\big\|_2$. 
\end{Assumption}
Before we proceed into the main theoretical results, we need the following Lemma which shows the effect of the error model on the geometry of the problem.
\begin{Lemma}
\label{Lem:LipsChitz+Errors}
If the exact gradient of $g$ is \textit{Lipschitz} with constant $L$, then the approximate gradient of $g$ is \textit{Lipschitz} with constant $L$ for absolute errors (Model \ref{ErrModel1}) and with constant $(1+\delta)L$ for relative errors (Model \ref{ErrModel2}).
\end{Lemma}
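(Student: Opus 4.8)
The plan is to verify, in each error model, the defining two‑point inequality for the perturbed gradient map $x \mapsto \nabla g^{\epsilon_1}(x)$, reducing it in both cases to the Lipschitz bound \eqref{Eq:LipschitzContinuity} for the exact gradient together with elementary norm inequalities. No deep machinery is needed; the content is entirely in how the error model is written before taking differences.

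For the absolute error model (Model~\ref{ErrModel1}), I would use \eqref{Eq:Model1} to write $\nabla g^{\epsilon_1}(x) = \nabla g(x) + \epsilon_1$, where the additive perturbation $\epsilon_1$ enters as a fixed offset (its magnitude bound $\delta$ in \eqref{Eq:Model1UpperDelta} does not involve $x$) — equivalently, querying the inexact oracle amounts to replacing $g$ by $g(\cdot) + \epsilon_1^\top(\cdot)$. The offset then cancels in the difference, $\nabla g^{\epsilon_1}(y) - \nabla g^{\epsilon_1}(x) = \nabla g(y) - \nabla g(x)$, so \eqref{Eq:LipschitzContinuity} gives $\big\|\nabla g^{\epsilon_1}(y) - \nabla g^{\epsilon_1}(x)\big\|_2 \le L\|y-x\|_2$, i.e. the approximate gradient is $L$‑Lipschitz.

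For the relative error model (Model~\ref{ErrModel2}), I would rewrite the componentwise bound \eqref{Eq:Model2UpperDeltaNG} as $\epsilon_1 = \diag(\eta)\,\nabla g(x)$ for some $\eta \in \mathbb{R}^n$ with $|\eta_j| \le \delta$ for all $j$ (take $\eta_j = \epsilon_{1,j}/\nabla g(x)_j$ when $\nabla g(x)_j \neq 0$, and $\eta_j = 0$ otherwise, noting that $|\epsilon_1| \le \delta|\nabla g(x)|$ forces $\epsilon_{1,j}=0$ in that case). Hence $\nabla g^{\epsilon_1}(x) = D\,\nabla g(x)$ with $D := I + \diag(\eta)$, and for any $x,y \in \mathbb{R}^n$,
\begin{equation*}
  \big\|\nabla g^{\epsilon_1}(y) - \nabla g^{\epsilon_1}(x)\big\|_2
  = \big\|D\big(\nabla g(y) - \nabla g(x)\big)\big\|_2
  \le \|D\|_2 \,\big\|\nabla g(y) - \nabla g(x)\big\|_2
  \le (1+\delta)\,L\,\|y-x\|_2 ,
\end{equation*}
where $\|D\|_2 = \max_j |1+\eta_j| \le 1+\delta$ since $\eta_j \in [-\delta,\delta]$, and the last step is \eqref{Eq:LipschitzContinuity}; this establishes the $(1+\delta)L$‑Lipschitz claim.

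Everything here is a short computation, so the only point requiring care is the bookkeeping of how the perturbation depends on the point at which the gradient is queried: for Model~\ref{ErrModel1} one must justify treating $\epsilon_1$ as a fixed additive term so that it drops out of the gradient difference, and for Model~\ref{ErrModel2} one must pass from the entrywise bound $|\epsilon_1|\le\delta|\nabla g(x)|$ to the diagonal‑scaling form and the spectral‑norm estimate $\|D\|_2 \le 1+\delta$. Once those reformulations are in place, both conclusions follow immediately from \eqref{Eq:LipschitzContinuity}.
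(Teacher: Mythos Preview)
Your proposal is correct and follows essentially the same route as the paper: for Model~\ref{ErrModel1} the additive offset cancels in the two-point difference, and for Model~\ref{ErrModel2} you write $\nabla g^{\epsilon_1}(x) = (I+\diag(\eta))\nabla g(x)$ with the same $\eta$ at both query points and bound the spectral norm by $1+\delta$---exactly what the paper does with its Hadamard-product notation $\epsilon_1^k = \kappa \odot \nabla g(\cdot)$ and the factor $|1+\kappa|$. Your version is slightly more careful in making the diagonal-matrix structure and the spectral-norm estimate explicit, but the argument is identical in substance.
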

\begin{proof}
See Section \ref{Subsec:ProofLipsChitz+Errors}.
\end{proof}
\subsection{Approximate proximal gradient}
In this section, we consider the approximate proximal gradient algorithm in~\eqref{Eq:PGDApproximateRepeated}, i.e., without
acceleration. We start by considering deterministic error sequences
$\{\epsilon_1^k\}_{k\geq 1}$ and $\{\epsilon_2^{k}\}_{k\geq 1}$, as in Assumption \ref{Ass:OptimizationProblem}, and then we consider the
case in which these sequences are random, as in
Assumption~\ref{Assum:RandomVectors}.

\subsubsection{Deterministic errors}
Our first result provides a bound on the ergodic convergence of the sequence
of function values, and decouples the contribution of the errors in the
computation of gradient, $\epsilon_1^k$, and in the computation of the proximal operator, $\epsilon_2^k$ and $r^k$.

\begin{Theorem}[Approximate proximal gradient, deterministic errors]
  \label{prop:1}
  Consider problem~\eqref{Eq:ProblemRepeated} and let
  Assumption~\ref{Ass:OptimizationProblem} hold. Then, for arbitrary error
  sequences $\{\epsilon_1^k\}_{k\geq 1}$ and $\{\epsilon_2^{k}\}_{k\geq 1}$, the
  sequence generated by approximate proximal gradient in~\eqref{Eq:PGDApproximateRepeated}
  with constant stepsize $s_k := s$, for all $k$,
  satisfies
% \begin{equation}
%     \label{Eq:Theorem1}
%     f\big(\frac{1}{k+1}\sum_{i=0}^{k}x^{i+1}\big)  
%     -
%     f(x^\star) 
%     \leq 
%     \frac{1}{k+1}
%     \Big[
%       \sum_{i=0}^{k}\epsilon_2^{i} 
%       + 
%       \sum_{i=0}^{k}\Big(\epsilon_1^{i}-\frac{1}{s}r^{i+1}\Big)^\top(x^\star-x^{i+1})
%     +
%     \frac{1}{2s}\norm{x^\star-x^0}_{2}^2\Big],
% \end{equation}
  \begin{equation}
    \label{Eq:Theorem1}
    \begin{split}
    &f\bigg(\frac{1}{k+1}\sum_{i=0}^{k}x^{i+1}\bigg)  
    -
    f(x^\star) 
    \leq 
    \frac{1}{k+1}
    \Bigg[
      \sum_{i=0}^{k}\epsilon_2^{i} 
      + 
      \sum_{i=0}^{k}\Big(\epsilon_1^{i}-\frac{1}{s}r^{i+1}\Big)^\top(x^\star-x^{i+1})
    \\&+ 
    \frac{1}{2s}\norm{x^\star-x^0}_{2}^2\Big]
    -
    \frac{1}{k+1}\Big[\frac{1}{2s}\sum_{i=0}^{k}\big\|r^{i+1}\big\|_2^2
    +
    \frac{1}{2s}\big\|x^\star-x^{k+1}\big\|_2^2 \Bigg],
    \end{split}
\end{equation}
  where $x^\star$ is any solution of~\eqref{Eq:ProblemRepeated}, $r^i$ is the residual vector associated with error $\epsilon_2^i$, defined by \eqref{Eq:ResidualVec},  $s \leq 1/L$ for absolute error Model \ref{ErrModel1}, and  $s \leq 1/(L+\delta)$ for relative error Model \ref{ErrModel2}.
\end{Theorem}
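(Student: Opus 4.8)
The plan is to prove a single-iteration inequality and then telescope it. Concretely, I would show that for every $x^\star \in X^\star$ and every $k \ge 0$,
\[
f(x^{k+1}) - f(x^\star)
\le
\Big(\epsilon_1^{k} - \tfrac1s r^{k+1}\Big)^{\!\top}\!\big(x^\star - x^{k+1}\big)
+ \tfrac{1}{2s}\big\|x^\star - x^{k}\big\|_2^2
- \tfrac{1}{2s}\big\|x^\star - x^{k+1}\big\|_2^2
- \tfrac{1}{2s}\big\|r^{k+1}\big\|_2^2
+ \epsilon_2^{k},
\]
with $r^{k+1}$ as in~\eqref{Eq:ResidualVec}. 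Summing over $i=0,\dots,k$, the consecutive terms $\tfrac1{2s}\|x^\star-x^{i}\|_2^2$ telescope, leaving $\tfrac1{2s}\|x^\star-x^0\|_2^2-\tfrac1{2s}\|x^\star-x^{k+1}\|_2^2$; dividing by $k+1$ and applying Jensen's inequality to the convex function $f$ to pull the running average inside $f$ yields exactly~\eqref{Eq:Theorem1}. So the real content is the one-step bound; the rest is routine.

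To prove the one-step bound I would mimic the noiseless argument (the ``fundamental prox-gradient inequality'' of~\cite{beck2017first}) while carrying the two perturbations explicitly. Write $G^k := \nabla g(x^k)+\epsilon_1^k$ for the computed gradient, $v^k := x^k - s G^k$, and let $\overline{x}^{k+1} := \mathrm{prox}_{sh}(v^k)$ be the exact-prox iterate, so $r^{k+1}=x^{k+1}-\overline{x}^{k+1}$. For the smooth part, add the descent inequality~\eqref{Eq:LipschitzContinuityAlt} for the pair $(x^k,x^{k+1})$ to the convexity inequality $g(x^k)\le g(x^\star)-\nabla g(x^k)^\top(x^\star-x^k)$; using $s\le 1/L$ to dominate $\tfrac L2\|x^{k+1}-x^k\|_2^2$ by $\tfrac1{2s}\|x^{k+1}-x^k\|_2^2$ and substituting $\nabla g(x^k)=G^k-\epsilon_1^k$, this produces an upper bound on $g(x^{k+1})$ in terms of $g(x^\star)$, $G^{k\top}(x^{k+1}-x^\star)$, $-\epsilon_1^{k\top}(x^{k+1}-x^\star)$, and $\tfrac1{2s}\|x^{k+1}-x^k\|_2^2$.

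For the nonsmooth part I would use two facts about the prox. First, exact optimality at $\overline{x}^{k+1}$ gives $-G^k+\tfrac1s(x^k-\overline{x}^{k+1})\in\partial h(\overline{x}^{k+1})$, hence the linear lower bound $h(x^\star)\ge h(\overline{x}^{k+1})+\big(-G^k+\tfrac1s(x^k-\overline{x}^{k+1})\big)^{\!\top}(x^\star-\overline{x}^{k+1})$. Second, the $\epsilon_2^k$-suboptimality of $x^{k+1}$ [definition~\eqref{Eq:ProxApproximate} with $u=s h$, $y=v^k$] gives $h(x^{k+1})-h(\overline{x}^{k+1})\le \tfrac{\epsilon_2^k}{s}+\tfrac1{2s}\big(\|\overline{x}^{k+1}-v^k\|_2^2-\|x^{k+1}-v^k\|_2^2\big)$. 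Substituting $\overline{x}^{k+1}=x^{k+1}-r^{k+1}$ and $v^k=x^k-sG^k$ into both and combining yields an upper bound on $h(x^{k+1})$ in terms of $h(x^\star)$, $G^{k\top}(x^\star-x^{k+1})$, cross terms in $r^{k+1}$, $-\tfrac1{2s}\|r^{k+1}\|_2^2$, and $\epsilon_2^k$. Adding the $g$- and $h$-bounds, the $G^k$-terms cancel exactly; the quadratic in $x^{k+1}-x^k$ together with the cross term $\tfrac1s(x^{k+1}-x^k)^\top(x^\star-x^{k+1})$ collapses, via $2a^\top b-\|a\|_2^2=\|b\|_2^2-\|b-a\|_2^2$, to $\tfrac1{2s}\|x^\star-x^k\|_2^2-\tfrac1{2s}\|x^\star-x^{k+1}\|_2^2$; and the surviving $\epsilon_1^k$ and $r^{k+1}$ terms assemble into $(\epsilon_1^k-\tfrac1s r^{k+1})^\top(x^\star-x^{k+1})$, which is the one-step bound. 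For the relative-error model, replace $s\le 1/L$ by the effective Lipschitz constant of Lemma~\ref{Lem:LipsChitz+Errors} and the corresponding restriction $s\le 1/(L+\delta)$.

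I expect the main obstacle to be purely the bookkeeping in that last combination: verifying that every $G^k$-dependent term cancels and that the $r^{k+1}$ contributions collapse to exactly $-\tfrac1{2s}\|r^{k+1}\|_2^2$ plus the single cross term $-\tfrac1s (r^{k+1})^{\top}(x^\star-x^{k+1})$, with no leftover. A secondary point to handle carefully is the scaling in the $\epsilon_2^k$-suboptimality condition~\eqref{Eq:ProxApproximate} (i.e.\ that it is applied to $s h$ rather than $h$), which is also what links $\epsilon_2^k$ to $\|r^{k+1}\|_2$ through Lemma~\ref{Lem:BackwardProxError}.
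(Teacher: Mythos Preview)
Your proposal is correct and follows essentially the same route as the paper. The paper packages the argument through the auxiliary function $G(x,x^k)=g(x^k)+\nabla^{\epsilon_1^k}g(x^k)^\top(x-x^k)+\tfrac{1}{2s}\|x-x^k\|_2^2+h(x)$: it uses $\epsilon_2^k$-suboptimality to get $G(x^{k+1},x^k)-G(\overline{x}^{k+1},x^k)\le\epsilon_2^k$, then $1/s$-strong convexity of $G(\cdot,x^k)$ (Theorem~\ref{thm:2}) to get $G(x^\star,x^k)-G(\overline{x}^{k+1},x^k)\ge\tfrac{1}{2s}\|x^\star-\overline{x}^{k+1}\|_2^2$, subtracts, applies~\eqref{Eq:LipschitzContinuityAlt} and convexity of $g$, and finally expands $\|x^\star-\overline{x}^{k+1}\|_2^2$ via $r^{k+1}$ to reach exactly your one-step bound; telescoping and Jensen finish the argument identically. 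Your use of the subgradient inclusion at $\overline{x}^{k+1}$ in place of the strong-convexity inequality for $G$ is an equivalent reformulation of the same step, so the bookkeeping you anticipate will indeed close with no leftover terms; your flagged scaling point on~\eqref{Eq:ProxApproximate} is the only place to be careful, and the paper handles it by writing the $\epsilon_2^k$-suboptimality directly in the $G$-scale (i.e.\ with the $\tfrac{1}{2s}\|\cdot\|_2^2$ quadratic), which you should match.
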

\begin{proof}See Section \ref{Subsec:Proofprop1}.\end{proof}
This result implies that the well-known $O(1/k)$ convergence rate for the gradient method without errors still holds when both $\{\epsilon_2^{k}\}$ and $\{(\epsilon_1^{k}-\frac{1}{s}r^{k+1})^\top (x^\star-x^k)\}$ are summable. Note that a faster convergence of these two errors will not improve the convergence rate but will yield better coefficients.

To obtain a bound independent of the particular sequence $\{x^k\}$, we can
apply Cauchy-Schwarz's inequality to the second term of the right-hand side
of~\eqref{Eq:Theorem1} followed by quasi-F\'ejer's bound (see Theorem~\ref{Thm:QuasiFejerPG} in the appendix):
\begin{Corollary} 
  \label{corol:1} 
Assume that for $k \geq k_0$, we have $\epsilon_2^{k} \leq c_2 \norm{{x}^{k+1}-{x}^{k}}_2 \leq c_2 \rho$ and $\epsilon_2^{k} \leq c_1 \norm{\nabla g(x^{k+1})-\nabla g(x^{k})}_2$ where $\rho$, $c_1$, $c_2$ and $k_0$ are constants. Then, for any $x^\star \in X^\star$ and $k\geq k_0$, under the same conditions as Theorem~\ref{prop:1}, the sequence generated by
approximate proximal gradient in~\eqref{Eq:PGDApproximateRepeated} satisfies
% \begin{align}
%     \label{Eq:Corollary1.1}
%     f\Big(\frac{1}{k+1}\sum_{i=0}^{k}x^{i+1}\Big)  -f(x^\star) &\leq \frac{1}{k+1} \sum_{i=1}^{k}\epsilon_2^{i} + \frac{1}{k+1}\sum_{i=1}^{k}\Big(\norm{\epsilon_1^{i}}_{2}+\sqrt{\frac{2\epsilon_2^{i}}{s}}\Big)\norm{x^\star - x^{0}}\notag\\
%     &\quad+\frac{1}{2s(k+1)}\norm{x^\star-x^0}_{2}^2+\frac{1}{k+1}\sum_{i=1}^{k}\big(\norm{\epsilon_1^{i}}_{2}+\sqrt{2\epsilon_2^{i}/{s}}\big) \big(\sum_{j=1}^iE^{j}+iC_\rho\big)
% \end{align}
\begin{equation}
  \label{Eq:Corollary1.1}
  \begin{split}
  &f\bigg(\frac{1}{k+1}\sum_{i=0}^{k}x^{i+1}\bigg)  
  -
  f(x^\star) 
  \leq 
  \frac{1}{k+1}
  \Bigg[
    \sum_{i=0}^{k}\epsilon_2^{i} 
+ \sum_{i=1}^{k}\Bigg(\norm{\epsilon_1^{i}}_{2}+\sqrt{\frac{2\epsilon_2^{i}}{s}}\Bigg)\norm{x^\star - x^{0}}_2
  \\&+\frac{1}{2s}\norm{x^\star-x^0}_{2}^2\Bigg]
  -\frac{1}{k+1}\Bigg[\frac{1}{2s}\sum_{i=0}^{k}\big\|r^{i+1}\big\|_2^2 
  +\frac{1}{2s}\big\|x^\star-x^{k+1}\big\|_2^2 \Bigg]
  \\&+\frac{1}{k+1}\sum_{i=1}^{k}\Bigg(\norm{\epsilon_1^{i}}_{2}+\sqrt{\frac{2\epsilon_2^{i}}{s}}\Bigg) \big(\sum_{j=1}^iE^{j}+iC_\rho\big),
  \end{split}
\end{equation}    
where $E^j = \norm{r^{j}}_2+s \norm{\epsilon_1^{j-1}}_2$ is an absolutely summable sequence and $C_\rho = \sqrt{2Lc_2 \rho} + c_1 \rho$.  For small perturbations and very small suboptimality stopping criterion, i.e., $\rho \approx 0$\footnote{$C_\rho = 0$ if the optimum $x^\star$ is reached.}, \eqref{Eq:Corollary1.1} can be approximated by
% \begin{align}
%     \label{Eq:CorollaryPGDDeterministicSmallPert}
%     f\Big(\frac{1}{k+1}\sum_{i=0}^{k}x^{i+1}\Big)  -f(x^\star) &\lessapprox \frac{1}{k+1} \sum_{i=1}^{k}\epsilon_2^{i} + \frac{1}{k+1}\sum_{i=1}^{k}\Big(\norm{\epsilon_1^{i}}_{2}+\sqrt{\frac{2\epsilon_2^{i}}{s}}\Big)\norm{x^\star - x^{0}}\notag\\
%     &\quad+\frac{1}{2s(k+1)}\norm{x^\star-x^0}_{2}^2
% \end{align}
\begin{equation}
\label{Eq:CorollaryPGDDeterministicApprox}
\begin{split}
  &f\bigg(\frac{1}{k+1}\sum_{i=0}^{k}x^{i+1}\bigg)  
  -
  f(x^\star) 
  \lessapprox 
  \frac{1}{k+1}
  \Bigg[
    \sum_{i=0}^{k}\epsilon_2^{i} 
+ \sum_{i=1}^{k}\Bigg(\norm{\epsilon_1^{i}}_{2}+\sqrt{\frac{2\epsilon_2^{i}}{s}}\Bigg)\norm{x^\star - x^{0}}_2
  \\&+\frac{1}{2s}\norm{x^\star-x^0}_{2}^2\Bigg],
\end{split}
\end{equation}    
where we have dropped the second order error terms. This result holds with $s \leq 1/L$ for absolute error Model \ref{ErrModel1}, and  $s \leq 1/(L+\delta)$ for relative error Model \ref{ErrModel2}.
\end{Corollary}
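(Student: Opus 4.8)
The plan is to start from the exact estimate~\eqref{Eq:Theorem1} of Theorem~\ref{prop:1} and to eliminate the only trajectory-dependent quantity on its right-hand side, namely the cross term $\sum_{i=0}^{k}\big(\epsilon_1^{i}-\frac{1}{s}r^{i+1}\big)^{\top}(x^{\star}-x^{i+1})$. First I would bound each summand by Cauchy-Schwarz followed by the triangle inequality,
$$
\big(\epsilon_1^{i}-\frac{1}{s}r^{i+1}\big)^{\top}(x^{\star}-x^{i+1})
\;\leq\;
\Big(\norm{\epsilon_1^{i}}_{2}+\frac{1}{s}\norm{r^{i+1}}_{2}\Big)\norm{x^{\star}-x^{i+1}}_{2},
$$
and then invoke Lemma~\ref{Lem:BackwardProxError} to replace $\frac{1}{s}\norm{r^{i+1}}_{2}$ by $\sqrt{2\epsilon_2^{i}/s}$, so that each summand is at most $\big(\norm{\epsilon_1^{i}}_{2}+\sqrt{2\epsilon_2^{i}/s}\big)\norm{x^{\star}-x^{i+1}}_{2}$ (the $i=0$ contribution being void once the $0$-th error terms are taken to vanish, consistent with the $0$-th iterate being the given initialization, which is why the corresponding sums in~\eqref{Eq:Corollary1.1} start at $i=1$).

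Next I would control the factor $\norm{x^{\star}-x^{i+1}}_{2}$ with the quasi-F\'ejer estimate of Theorem~\ref{Thm:QuasiFejerPG}, which bounds $\norm{x^{\star}-x^{k+1}}_{2}$ by $\norm{x^{\star}-x^{0}}_{2}$ plus an accumulated perturbation whose increment at index $j$ decomposes into $E^{j}=\norm{r^{j}}_{2}+s\norm{\epsilon_1^{j-1}}_{2}$ and a part depending on $\epsilon_2^{j}$. It is here that the extra hypotheses of the corollary enter: for $k\geq k_{0}$, the assumptions $\epsilon_2^{k}\leq c_{2}\norm{x^{k+1}-x^{k}}_{2}\leq c_{2}\rho$ and $\epsilon_2^{k}\leq c_{1}\norm{\nabla g(x^{k+1})-\nabla g(x^{k})}_{2}$, combined with the Lipschitz continuity~\eqref{Eq:LipschitzContinuity} and the admissible stepsize, collapse the $\epsilon_2$-dependent part of the increment into the single uniform constant $C_{\rho}=\sqrt{2Lc_{2}\rho}+c_{1}\rho$, yielding $\norm{x^{\star}-x^{i+1}}_{2}\leq\norm{x^{\star}-x^{0}}_{2}+\sum_{j=1}^{i}E^{j}+iC_{\rho}$ for $i\geq k_{0}$. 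Substituting this into the sum from the previous step and separating the $\norm{x^{\star}-x^{0}}_{2}$ contribution from the $\big(\sum_{j=1}^{i}E^{j}+iC_{\rho}\big)$ contribution reproduces the two error sums appearing in~\eqref{Eq:Corollary1.1}; carrying the nonpositive quadratic terms $-\frac{1}{2s}\sum_{i=0}^{k}\norm{r^{i+1}}_{2}^{2}$ and $-\frac{1}{2s}\norm{x^{\star}-x^{k+1}}_{2}^{2}$ along unchanged and dividing by $k+1$ then gives~\eqref{Eq:Corollary1.1}. Absolute summability of $\{E^{j}\}$ follows from summability of $\{\norm{r^{j}}_{2}\}$ (equivalently, of $\{\epsilon_2^{j}\}$, by Lemma~\ref{Lem:BackwardProxError}) together with that of $\{\norm{\epsilon_1^{j}}_{2}\}$.

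For the approximate form~\eqref{Eq:CorollaryPGDDeterministicApprox}, I would specialize to the regime $\rho\approx 0$ of a very tight suboptimality stopping criterion, in which $C_{\rho}\approx 0$, and then discard the leftover term $\frac{1}{k+1}\sum_{i=1}^{k}\big(\norm{\epsilon_1^{i}}_{2}+\sqrt{2\epsilon_2^{i}/s}\big)\sum_{j=1}^{i}E^{j}$, which is second order in the error magnitudes (a product of one sum of error norms with another), hence negligible next to the terms that are first order in the errors; dropping the nonpositive quadratic terms as well, which can only weaken the upper bound, leaves~\eqref{Eq:CorollaryPGDDeterministicApprox}.

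The step I expect to be the main obstacle is the second one: propagating the quasi-F\'ejer recursion and, in particular, verifying that the two structural hypotheses on $\epsilon_2^{k}$ together with Lipschitz continuity reduce the $\epsilon_2$-dependent increment exactly to $C_{\rho}=\sqrt{2Lc_{2}\rho}+c_{1}\rho$, while correctly handling the transient indices $k<k_{0}$, for which $C_{\rho}$ need not hold and the corresponding increments must be absorbed into the $E^{j}$ sum or bounded separately. The Cauchy-Schwarz reduction and the final regrouping of terms are routine once the quasi-F\'ejer bound of Theorem~\ref{Thm:QuasiFejerPG} is in hand.
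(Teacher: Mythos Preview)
Your proposal is correct and follows essentially the same route as the paper's own proof: start from the bound of Theorem~\ref{prop:1}, apply Cauchy--Schwarz together with Lemma~\ref{Lem:BackwardProxError} to the cross term, and then use the quasi-F\'ejer estimate of Theorem~\ref{Thm:QuasiFejerPG} recursively to reduce $\norm{x^\star-x^{i}}_{2}$ to $\norm{x^\star-x^{0}}_{2}+\sum_{j=1}^{i}E^{j}+iC_{\rho}$. The only cosmetic discrepancy is the index in the residual factor ($x^{i}$ versus $x^{i+1}$), which the paper itself is not fully consistent about and which does not affect the argument.
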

\begin{proof}See Section \ref{Subsec:Proofcorol1}.\end{proof}
Notice that the use of Cauchy-Schwarz's inequality followed by quasi-F\'ejer's inequality leads to a looser bound than the original result \eqref{Eq:Theorem1}. Nonetheless, the $O(1/k)$ convergence rate is still guaranteed with weaker summability assumptions of $\{\epsilon_2^{k}\}_{k\geq 1}$ and $\{\norm{\epsilon_1^{k}}_2\}_{k\geq 1}$. If we set both errors to zero for all $k\geq 1$, we recover the error-free optimal upper bound $\frac{1}{2sk}\norm{x^\star-x^0}_{2}^2$ \cite{beck2017first}.% Do we check experimentally how much we lose with these relaxations?}.

%\anis{Why are the first terms in~\eqref{Eq:Theorem1}
%and~\eqref{Eq:Corollary1.1} different?}

The decoupled error terms in  \eqref{Eq:Theorem1} and~\eqref{Eq:Corollary1.1} completely eliminate any error redundancy. Although \eqref{Eq:Corollary1.1} shares the same error-free term with \eqref{schmidt1}, our proposed bound gives a better approximation of the discrepancy caused by perturbations, and consequently we obtain better error terms. For instance, let us consider the case where both proximal and gradient errors decrease as $O({1}/{k})$ (i.e., nonsummable). Then Corollary~\ref{corol:1} yields an overall convergence rate of $O(\log k/{k})$ which is less conservative than what would have been obtained from \eqref{schmidt1}, i.e, $O(\log^2 k/{k})$. Additionally, as a necessary condition for convergence, we only require the partial sums $\sum_{i=1}^{k}\epsilon_2^{i}$ and $\sum_{i=1}^{k}\norm{\epsilon_1^{i}}_2$ to be in $o(k)$ as compared to the stronger condition $o(\sqrt{k})$ of \cite{schmidt2011convergence}. 

\subsubsection{Random errors}
Let us now consider the case in which $\epsilon_1^k$, $\epsilon_2^{k}$ and
therefore $x^k$, are random, and let $\epsilon_{1_\Omega}^k$, $\epsilon_{2_\Omega}^k$ and $x_{\Omega}^k$ be the corresponding random variables/vectors.   

\begin{Theorem} [\textbf{Random errors}]\label{prop:2}
Consider problem~\eqref{Eq:ProblemRepeated} and let Assumption~\ref{Ass:OptimizationProblem} hold. Assume that the gradient error $\{\epsilon_{1_\Omega}^k\}_{k\geq 1}$ and residual proximal error $\{r_{_\Omega}^{k}\}_{k\geq 1}$ sequences satisfy Assumption~\ref{Assum:RandomVectors}, \ref{Assum:RandomVectors2} and $\mathbb{P}\big(\epsilon_{2_\Omega}^k \leq \varepsilon_0\big) = 1$, for all $k > 0$, and for some $\varepsilon_0 \in \mathbb{R}$. Let $\{x_{\Omega}^i\}$ denote a sequence generated by the approximate proximal gradient algorithm in~\eqref{Eq:PGDApproximateRepeated} with constant stepsize $s_k = s$, for all $k$. Assume that $\norm{x_{_\Omega}^k-x_{_\Omega}^\star}_{2}^2 \leq \norm{x_{_\Omega}^0-x_{_\Omega}^\star}_{2}^2$ hold with probability $p$, for all $k$. Then, for any $\gamma >0$,
\begin{equation}
\begin{split}
    &f\bigg(\frac{1}{k}\sum_{i=1}^{k}x_{_\Omega}^{i}  \bigg)-f(x^\star)
\leq \frac{1}{k}\sum_{i=1}^{k}\epsilon_{2_\Omega}^{i} + \frac{\gamma}{\sqrt{k}}\Bigg(\sqrt{n}M_{\nabla g}|\delta|+\sqrt{\frac{2\varepsilon_0}{s}}\Bigg)\norm{x^\star-x^0}_{2}
\\&+\frac{1}{2sk}\norm{x^\star-x^0}_{2}^2,
\end{split}
\label{Eq:Theorem2}
\end{equation}
with probability at least  $p^k\big(1-2\exp(-\frac{\gamma^2}{2})\big)$, where $x^\star$ is any solution of~\eqref{Eq:ProblemRepeated},  $M_{\nabla g} = 1$, $s \leq 1/L$ for absolute error Model \ref{ErrModel1}, and $M_{\nabla g} = \underset{i \in \mathbb{N}_{+}} \sup\bigg\{\norm{\nabla g(x^i)}_{\infty}\bigg\}$, $s_k := s \leq 1/(L+\delta)$ for relative error Model \ref{ErrModel2}. 
\end{Theorem}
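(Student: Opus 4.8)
The plan is to bootstrap off the deterministic analysis in Theorem~\ref{prop:1} (or its corollary), applied pathwise on the probability space, and then convert the data-dependent error terms into a uniform probabilistic bound via a martingale concentration argument. First I would write the deterministic inequality from Theorem~\ref{prop:1} for the random sequence $\{x_{_\Omega}^k\}$, keeping the term $\sum_{i}(\epsilon_{1_\Omega}^{i}-\tfrac1s r_{_\Omega}^{i+1})^\top(x^\star-x_{_\Omega}^{i+1})$ intact rather than bounding it by Cauchy--Schwarz. The key observation is that, under Assumptions~\ref{Assum:RandomVectors} and~\ref{Assum:RandomVectors2} — specifically~\eqref{Eq:AssumptionRandomInnerProduct} and~\eqref{Eq:ResidAssumptionRandomInnerProduct} together with the zero-mean conditions — the inner products ${\epsilon_{1_\Omega}^{i}}^\top x_{_\Omega}^{i}$ and ${r_{_\Omega}^{i}}^\top x_{_\Omega}^{i}$ have conditional mean zero given the past, so the partial sums $S_k := \sum_{i=1}^{k}(\epsilon_{1_\Omega}^{i}-\tfrac1s r_{_\Omega}^{i+1})^\top(x^\star-x_{_\Omega}^{i+1})$ form a martingale (this is exactly the martingale property advertised in the abstract, presumably established in an earlier lemma in Section~\ref{Sec:MainResults}'s full version).

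Second, I would control the martingale increments in order to invoke a Hoeffding/Azuma-type concentration inequality. Each increment is of the form $(\epsilon_{1_\Omega}^{i}-\tfrac1s r_{_\Omega}^{i+1})^\top(x^\star-x_{_\Omega}^{i+1})$; using the almost-sure bounds $\|\epsilon_{1_\Omega}^{i}\|_\infty \le \delta$ (Model~\ref{ErrModel1}) or $|\epsilon_{1_\Omega}^i| \le \delta|\nabla g(x^i)|$ so that $\|\epsilon_{1_\Omega}^i\|_2 \le \sqrt n M_{\nabla g}\delta$ (Model~\ref{ErrModel2}), together with the residual bound $\|r_{_\Omega}^{i+1}\|_2 \le \sqrt{2 s\, \epsilon_{2_\Omega}^{i}} \le \sqrt{2 s\, \varepsilon_0}$ from Lemma~\ref{Lem:BackwardProxError} and the event $\{\|x^\star - x_{_\Omega}^{i+1}\|_2 \le \|x^\star - x^0\|_2\}$, each increment is bounded in absolute value by roughly $(\sqrt n M_{\nabla g}\delta + \sqrt{2\varepsilon_0/s})\,\|x^\star-x^0\|_2$. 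Azuma's inequality then gives $\mathbb{P}(S_k \ge \gamma \sqrt{k}\,(\sqrt n M_{\nabla g}\delta + \sqrt{2\varepsilon_0/s})\|x^\star-x^0\|_2) \le \exp(-\gamma^2/2)$ (with a symmetric tail, hence the factor $2$), which produces the $\gamma/\sqrt{k}$ term in~\eqref{Eq:Theorem2}.

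Third, I would assemble the pieces: on the intersection of the Azuma event with the $k$ quasi-Féjer events $\{\|x_{_\Omega}^k - x_{_\Omega}^\star\|_2^2 \le \|x_{_\Omega}^0 - x_{_\Omega}^\star\|_2^2\}$ (each of probability $p$), the deterministic inequality simplifies: the $\sum \epsilon_{2_\Omega}^i$ term stays, the martingale term is replaced by its concentration bound, the $-\tfrac1{2s}\sum\|r^{i+1}\|_2^2$ and $-\tfrac1{2s}\|x^\star-x^{k+1}\|_2^2$ terms are nonpositive and dropped, and $\tfrac1{2s}\|x^\star-x^0\|_2^2$ remains, yielding exactly~\eqref{Eq:Theorem2}. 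The probability of this intersection is at least $p^k(1 - 2\exp(-\gamma^2/2))$ by a union bound, assuming — as seems implicit — that the quasi-Féjer events and the Azuma event are handled via independence or a crude union bound over the complement. The main obstacle I anticipate is the interplay between the martingale structure and the conditioning events: one must be careful that conditioning on $\{\|x^\star - x^{i+1}\|_2 \le \|x^\star-x^0\|_2\}$ does not destroy the conditional-mean-zero property of the increments, so the clean route is to bound $S_k$ unconditionally via Azuma (the increment bound $(\sqrt n M_{\nabla g}\delta + \sqrt{2\varepsilon_0/s})\|x^\star-x^0\|_2$ must itself hold almost surely, which in turn may require the quasi-Féjer bound to hold surely or the increment bound to be stated in a form that survives on the good event) and only afterwards intersect with the quasi-Féjer events — reconciling these two requirements, and getting the constant $M_{\nabla g}$ and the $\sqrt n$ factor to come out exactly as stated for each error model, is where the care is needed.
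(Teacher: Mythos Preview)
Your proposal is correct and mirrors the paper's own proof almost step for step: define the martingale $T_k=\sum_i(\epsilon_{1_\Omega}^{i}-\tfrac{1}{s}r_{_\Omega}^{i+1})^\top(x^\star-x_{_\Omega}^{i+1})$, verify the martingale property from Assumptions~\ref{Assum:RandomVectors}--\ref{Assum:RandomVectors2}, bound the increments via $\|\epsilon_{1_\Omega}^i\|_2\le\sqrt{n}M_{\nabla g}\delta$ and Lemma~\ref{Lem:BackwardProxError}, apply Azuma--Hoeffding, then invoke the quasi-F\'ejer assumption on each $\|x^\star-x_{_\Omega}^i\|_2$ and substitute into the deterministic bound~\eqref{Eq:Theorem1}. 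The only minor difference is ordering: the paper applies Azuma first with the data-dependent increment bounds $c_i=(\sqrt{n}M_{\nabla g}\delta+\sqrt{2\epsilon_2^i/s})\|x^\star-x_{_\Omega}^i\|_2$ and \emph{then} replaces $\sqrt{\sum_i\|x^\star-x_{_\Omega}^i\|_2^2}$ by $\sqrt{k}\,\|x^\star-x^0\|_2$ via the probability-$p$ events, which is exactly the ``clean route'' you describe in your last paragraph---and you are right that the interaction between the random $c_i$'s and the conditioning is glossed over there.
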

\begin{proof}See Section \ref{Subsec:Proofprop2}\end{proof}
For large scale problems,\footnote{And for same levels of error magnitudes $\delta$ and $\varepsilon_0$.} we typically have $n \gg \frac{1}{s} \geq L$; therefore, we obtain the following approximated bound
\begin{equation}
    f\bigg(\frac{1}{k}\sum_{i=1}^{k}x_{_\Omega}^{i}  \bigg)-f(x^\star)
\lessapprox \frac{1}{k}\sum_{i=1}^{k}\epsilon_{2_\Omega}^{i} + \gamma M_{\nabla g}\sqrt{\frac{n}{k}}|\delta|\norm{x^\star-x^0}_{2}+
    \frac{1}{2sk}\norm{x^\star-x^0}_{2}^2
    \label{prop2approx},
\end{equation}
with the same probability. In the absence of computational errors, \eqref{Eq:Theorem2} coincides with the results of Theorem~\ref{prop:1} and Corollary~\ref{corol:1} which reduce to the deterministic noise-free convergence upper bound, i.e, $\frac{1}{2sk}\norm{x^\star-x^0}_{2}^2$. With exact proximal computation, i.e., $\epsilon_{2_\Omega}^k = 0$, for all $k$, if we let the machine precision $\delta$ to decrease as $O({1}/{k^{0.5+\varsigma}})$, i.e, progressively increase computation accuracy, then we retrieve back the optimal convergence rate $O({1}/{k})$. In order to recover the same convergence rate for the inexact proximal case, we also need the sum of the ensemble means $E(\epsilon_{2_\Omega}^{i})$ to decrease as $O({1}/{k^{1+\varsigma}})$ which is a weaker requirement than $O({1}/{k^{2+\varsigma}})$ in  \cite{schmidt2011convergence}. This result also suggests that a slower $O({1}/{\sqrt{k}})$ convergence rate (same as the noise-free subgradient method) is achieved when the sequence of ensemble means $\{E(\epsilon_{2_\Omega}^{k})\}$ is summable for all centered and bounded sequences $\{\epsilon_{1_\Omega}^{k}\}$, and consequently the proximal error is the main contributor to any divergence from the optimal set $X^\star$.

Notice that for a fixed machine precision $\delta$ and probability parameter $\gamma$ we obtain a predictable error residual rather than unpredictable  running error terms as in Theorem~\ref{prop:1}, Corollary~\ref{corol:1} or \eqref{schmidt1} without making any summability assumptions on $\{\norm{\epsilon_{1_\Omega}^{k}}_2\}$ (as in Corollary \ref{corol:1}) or $\{\epsilon_{1_\Omega}^{k}\}$ in general. 

Morevover, the effect of the dimension of the problem $n$ shows up explicitly in the convergence bound \eqref{Eq:Theorem2} which is missing in Theorem~\ref{prop:1} and Corollary \ref{corol:1} as well as in the original work of \cite{schmidt2011convergence}. The latter suggests that using progressively sparser gradient vectors\footnote{As is the case in \textit{proximal gradient} algorithm when applied to LASSO.} can potentially accelerate the convergence speed but never faster than the optimal (limit) speed of $O({1}/{k})$. Overall, better parameter design results in smaller constants, but not necessarily in exceeding the optimal convergence rate.

The following result applies if we assume statistical stationarity\footnote{Whose ensemble mean and variance are time-invariant.}. 
\begin{Theorem} [\textbf{Random stationary errors}]\label{prop:3}
Consider problem~\eqref{Eq:ProblemRepeated}, let Assumption~\ref{Ass:OptimizationProblem} hold and assume that the rounding error $\{\epsilon_{1_\Omega}^k\}_{k\geq 1}$ and residual error $\{r_{_\Omega}^{k}\}_{k\geq 1}$ sequences satisfy Assumption~\ref{Assum:RandomVectors} and that the proximal computation error is upper bounded, i.e $\mathbb{P}\big(\epsilon_{2_\Omega}^k \leq \varepsilon_0\big) = 1$  for all $k \geq 1$ and stationary with constant mean $E(\epsilon_{2_\Omega})$. Let $\{x_{\Omega}^i\}$ denote a sequence generated by the approximate proximal gradient algorithm in~\eqref{Eq:PGDApproximateRepeated} with constant stepsize $s_k = s$, for all $k$. Assume that $\norm{x_{_\Omega}^k-x_{_\Omega}^\star}_{2}^2 \leq \norm{x_{_\Omega}^0-x_{_\Omega}^\star}_{2}^2$ hold with probability $p$, for all $k$. Then, for any $\gamma >0$,
\begin{equation}
    f\bigg(\frac{1}{k}\sum_{i=1}^{k}x_{_\Omega}^{i}  \bigg)-f(x^\star) \leq E\big(\epsilon_{2_\Omega}\big) + \frac{\gamma}{\sqrt k}\bigg(\frac{\varepsilon_0}{2} +\sqrt{n}M_{\nabla g}|\delta|\norm{x^\star-x^0}_{2}\bigg)+
    \frac{1}{2sk}\norm{x^\star-x^0}_{2}^2
    \label{Eq:Theorem3},
\end{equation}
with probability at least $p^k\big(1-4\exp(-\frac{\gamma^2}{2})\big)$, where $x^\star$ is any solution of~\eqref{Eq:ProblemRepeated},  $M_{\nabla g} = 1$, $s \leq 1/L$ for absolute error Model \ref{ErrModel1}, and $M_{\nabla g} = \underset{i \in \mathbb{N}_{+}} \sup\bigg\{\norm{\nabla g(x^i)}_{\infty}\bigg\}$, $s_k := s \leq 1/(L+\delta)$ for relative error Model \ref{ErrModel2}.
\end{Theorem}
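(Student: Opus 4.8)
The plan is to derive \eqref{Eq:Theorem3} from the pathwise deterministic estimate of Theorem~\ref{prop:1} by combining two martingale concentration bounds: one for the gradient and proximal-residual inner products (as already needed for Theorem~\ref{prop:2}) and a second one --- the new ingredient here --- for the cumulative proximal error, which stationarity lets us centre at $\mathbb{E}(\epsilon_{2_\Omega})$. First, apply Theorem~\ref{prop:1} realization by realization to the random sequences $\{x^k_{_\Omega}\}$, $\{\epsilon_{1_\Omega}^k\}$, $\{\epsilon_{2_\Omega}^k\}$, $\{r^k_{_\Omega}\}$ and discard the two nonpositive terms $-\frac{1}{2s}\sum_i\|r^{i+1}_{_\Omega}\|_2^2$ and $-\frac{1}{2s}\|x^\star-x^{k+1}_{_\Omega}\|_2^2$ on its right-hand side. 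This gives, almost surely,
\[
f\Big(\tfrac{1}{k}\textstyle\sum_{i=1}^{k}x^{i}_{_\Omega}\Big)-f(x^\star)\ \le\ \tfrac{1}{k}\sum_{i=1}^{k}\epsilon_{2_\Omega}^{i}\ +\ \tfrac{1}{k}\sum_{i=1}^{k}Z_i\ +\ \tfrac{1}{2sk}\|x^\star-x^0\|_2^2,
\]
where $Z_i:=\big(\epsilon_{1_\Omega}^{i}-\tfrac{1}{s}r^{i}_{_\Omega}\big)^\top(x^\star-x^{i}_{_\Omega})$. For Model~\ref{ErrModel1} this requires $s\le 1/L$ and $M_{\nabla g}=1$; for the relative-error Model~\ref{ErrModel2}, Lemma~\ref{Lem:LipsChitz+Errors} forces $s\le 1/(L+\delta)$, and since the componentwise gradient error is then bounded by $\delta|\nabla g(x^i)|$ we get $\|\epsilon_{1_\Omega}^{i}\|_2\le\sqrt{n}\,M_{\nabla g}|\delta|$.

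Next, set $\mathcal{F}_{i-1}:=\sigma\big(\epsilon_{1_\Omega}^{j},r^{j}_{_\Omega},x^{j}_{_\Omega}:j<i\big)$ and expand $Z_i=\epsilon_{1_\Omega}^{i\top}x^\star-\epsilon_{1_\Omega}^{i\top}x^{i}_{_\Omega}-\tfrac{1}{s}r^{i\top}_{_\Omega}x^\star+\tfrac{1}{s}r^{i\top}_{_\Omega}x^{i}_{_\Omega}$. Using the zero conditional means \eqref{Eq:AssumptionRandomZeroMean} and \eqref{Eq:ResidAssumptionRandomZeroMean} for the first and third terms, and the conditional-orthogonality hypotheses \eqref{Eq:AssumptionRandomInnerProduct} and \eqref{Eq:ResidAssumptionRandomInnerProduct} for the second and fourth, yields $\mathbb{E}[Z_i\mid\mathcal{F}_{i-1}]=0$; the latter two hypotheses are genuinely needed because $x^{i}_{_\Omega}$ is not $\mathcal{F}_{i-1}$-measurable, so the cross terms do not vanish by zero-mean alone. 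Likewise, stationarity gives $\mathbb{E}[\epsilon_{2_\Omega}^{i}]=\mathbb{E}(\epsilon_{2_\Omega})$, so $\{\epsilon_{2_\Omega}^{i}-\mathbb{E}(\epsilon_{2_\Omega})\}$ is a bounded martingale-difference sequence.

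Then, on the event $\mathcal{A}_k:=\{\|x^{i}_{_\Omega}-x^\star\|_2\le\|x^0-x^\star\|_2,\ i=1,\dots,k\}$, which holds with probability at least $p^k$, Cauchy-Schwarz together with $\|\epsilon_{1_\Omega}^{i}\|_2\le\sqrt{n}M_{\nabla g}|\delta|$ and Lemma~\ref{Lem:BackwardProxError} (which bounds $\|r^{i}_{_\Omega}\|_2$, hence the residual inner product $\tfrac{1}{s}r^{i\top}_{_\Omega}(x^\star-x^{i}_{_\Omega})$, in terms of $\varepsilon_0$) bound $|Z_i|$ uniformly. Applying the sharp bounded-increment form of Azuma-Hoeffding to $\sum_{i\le k}Z_i$, and separately to $\sum_{i\le k}(\epsilon_{2_\Omega}^{i}-\mathbb{E}(\epsilon_{2_\Omega}))$ using $\epsilon_{2_\Omega}^{i}\in[0,\varepsilon_0]$ almost surely, gives $\tfrac{1}{k}\sum_{i\le k}Z_i\le\tfrac{\gamma}{\sqrt k}\sqrt{n}M_{\nabla g}|\delta|\,\|x^\star-x^0\|_2$ (the proximal-residual part of $Z_i$ being absorbed through Lemma~\ref{Lem:BackwardProxError}) and $\tfrac{1}{k}\sum_{i\le k}\epsilon_{2_\Omega}^{i}\le\mathbb{E}(\epsilon_{2_\Omega})+\tfrac{\gamma\varepsilon_0}{2\sqrt k}$, each off an event of probability at most $2\exp(-\gamma^2/2)$. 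Intersecting $\mathcal{A}_k$ with the complements of these two events and using a union bound gives probability at least $p^k\big(1-4\exp(-\gamma^2/2)\big)$; substituting these estimates into the displayed inequality and collecting the $\gamma/\sqrt k$ terms yields \eqref{Eq:Theorem3}.

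The hard part is reconciling the martingale step with the boundedness that Azuma-Hoeffding requires: the increment bound on $Z_i$ is only available on $\mathcal{A}_k$, whereas the martingale property holds for the unconditioned sequence, so one cannot simply condition on $\mathcal{A}_k$ first. The clean way around this --- and the point I would be most careful about --- is to apply the concentration inequalities to the stopped processes $Z_i\,\mathbf{1}\{i\le\tau\}$, with $\tau:=\inf\{i:\|x^{i}_{_\Omega}-x^\star\|_2>\|x^0-x^\star\|_2\}$, which remain martingale-difference sequences with uniformly bounded increments and agree with $Z_i$ for $i\le k$ on $\mathcal{A}_k$; the same device handles the proximal-error sum. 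Everything else is routine bookkeeping.
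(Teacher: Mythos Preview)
Your approach is essentially the paper's. The paper's proof of Theorem~\ref{prop:3} simply applies Hoeffding's inequality (Lemma~\ref{lem:4}) to $\sum_{i=1}^k\epsilon_{2_\Omega}^i$ using $0\le\epsilon_{2_\Omega}^i\le\varepsilon_0$ to obtain $\tfrac{1}{k}\sum_i\epsilon_{2_\Omega}^i\le E(\epsilon_{2_\Omega})+\tfrac{\gamma\varepsilon_0}{2\sqrt k}$ with probability at least $1-2\exp(-\gamma^2/2)$, then substitutes this directly into the bound~\eqref{Eq:Theorem2} of Theorem~\ref{prop:2} --- whose proof already contains exactly your martingale argument for $T_k=\sum_i Z_i$ --- and combines the two probabilistic events via the union bound of Lemma~\ref{lem:sumprob}. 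You have unpacked both stages in one pass rather than invoking Theorem~\ref{prop:2} as a black box; your stopping-time treatment of the conditioning on~$\mathcal{A}_k$ is more careful than anything the paper does, which simply asserts the increment bound on that event.

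One small flag: the parenthetical ``the proximal-residual part of $Z_i$ being absorbed through Lemma~\ref{Lem:BackwardProxError}'' does not hold as stated. That lemma gives $\|r^i_{_\Omega}\|_2\le\sqrt{2s\varepsilon_0}$, so Cauchy--Schwarz yields $|Z_i|\le\big(\sqrt{n}M_{\nabla g}|\delta|+\sqrt{2\varepsilon_0/s}\big)\|x^\star-x^0\|_2$ on $\mathcal{A}_k$, and the residual contributes an \emph{additive} $\sqrt{2\varepsilon_0/s}\,\|x^\star-x^0\|_2$ inside the $\gamma/\sqrt k$ bracket rather than disappearing. Following the paper's own recipe of substituting~\eqref{e2leq} into~\eqref{Eq:Theorem2} produces this extra term as well; it is present in~\eqref{Eq:Theorem2} but silently dropped in the stated bound~\eqref{Eq:Theorem3}, so the discrepancy lies in the paper's statement rather than in your outline.
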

\begin{proof}See Section \ref{Subsec:ProofEq:Theorem3}\end{proof}
Once again, if both errors are forced to zero in \eqref{Eq:Theorem3} then the optimal convergence rate is obtained as in Theorem~\ref{prop:1} and Theorem~\ref{prop:2}. \eqref{Eq:Theorem3} also implies that we obtain a worst case convergence rate of $O(1)$, i.e, convergence up to a predicted constant residual $E(\epsilon_{2_\Omega})$.

Notice that if we further assume that the proximal error sequence is stationary with zero mean, i.e., $E(\epsilon_{2_\Omega}) = 0$, then we can recover the $O({1}/{\sqrt{k}})$ convergence rate. In practice, however, the latter assumption can be very misleading and a better approach would be to compensate for the algorithm to reduce or completely eliminate the positive bias term $E(\epsilon_{2_\Omega}) \geq 0$. Notice that, in the long run, a faster decay of  $\{\epsilon_{2_\Omega}^{k}\}$ will not improve the $O({1}/{\sqrt{k}})$ rate but will achieve better error coefficients in the short run. In other words, choosing progressively decreasing error sequence $\{\epsilon_{2_\Omega}^{k}\}$ improves (i.e., reduces) the coefficient of $\frac{1}{\sqrt{k}}$ and,  because of the nonnegativity of the latter it is practically impossible to design a zero-mean proximal error; therefore, the algorithm does not converge in the strict sense of the word but only converges to a neighborhood set around the optimal set $X^\star$ whose radius is determined by the residual $E(\epsilon_{2_{\Omega}})$.
\subsection{Accelerated Approximate PG}
\subsubsection{Deterministic errors} We now analyze the effect of computational inaccuracy on the modified version of the inexact PG algorithm, viz. the inexact accelerated PG. In what follows, we establish upper bounds on the convergence of the accelerated PG in the presence of deterministic errors in the computation of the gradient as well as in the proximal operation step.
\begin{Theorem} [\textbf{Accelerated with deterministic errors}]\label{Theorem1} Consider the approximate accelerated PG in~\eqref{Eq:AcceleratedPG} with constant stepsize $s_k := s$, arbitrary error   sequences $\{\epsilon_1^k\}_{k\geq 1}$ and $\{\epsilon_2^{k}\}_{k\geq 1}$, and with parameter $\beta_k = (\alpha_{k-1} - 1)/\alpha_k$, where $\alpha_k$ satisfies
\begin{itemize}
    \item $\alpha_k \geq 1 \quad \forall \quad k > 0$ and $\alpha_0 = 1$
    \item $\alpha_k^2-\alpha_k = \alpha_{k-1}$
    \item $\{\alpha_k\}_{k=0}^\infty$ is an increasing sequence and proportional to $k$ ($O(k)$)
\end{itemize}
Also, assume that $s \leq 1/L$, under error Model \ref{ErrModel1}, and  $s_k := s \leq 1/(L+\delta)$, under error Model \ref{ErrModel2}. Then, the sequence $\{x^k\}$ produced by ~\eqref{Eq:AcceleratedPG} satisfies
\begin{equation}
   f(x^{k+1})-f(x^\star) \leq \frac{1}{\alpha_k^2} \bigg[\sum_{i=0}^k \alpha_i^2\epsilon_2^{i} + \sum_{i=0}^k \alpha_i \bigg({\epsilon_1^{i}} -\frac{1}{s} r^{i+1}\bigg)^\top u^{i+1}+\frac{1}{2s}\norm{x^\star-x^{0}}_{2}^2.  \bigg]
   \label{Eq:Theorem4},
\end{equation}
where $u^{i} = x^\star- x_{}^i + (1-\alpha_{i-1})(x_{}^{i} -x^{i-1})$, $x^\star$ is any solution of~\eqref{Eq:ProblemRepeated}.
\end{Theorem}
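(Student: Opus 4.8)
The plan is to reproduce the FISTA-type telescoping argument (see \cite{beck2009fast,beck2017first}), but initialized from the error-aware one-step estimate that already underlies the proof of Theorem~\ref{prop:1}. Write $\overline{x}^{i+1} = \text{prox}_{sh}(y^i - s(\nabla g(y^i) + \epsilon_1^i))$, $r^{i+1} = x^{i+1} - \overline{x}^{i+1}$, and $v^j := f(x^j) - f(x^\star) \geq 0$. Combining the descent inequality~\eqref{Eq:LipschitzContinuityAlt} at $y^i$, convexity of $g$ and of $h$, the $\epsilon_2^i$-suboptimality~\eqref{Eq:ProxApproximate} of $x^{i+1}$, and the splitting of the error terms $\epsilon_1^i$ and $r^{i+1}$ --- exactly as in the proof of Theorem~\ref{prop:1}, with $y^i$ playing the role of $x^i$ --- one obtains, for every $z \in \mathbb{R}^n$,
\[
 f(x^{i+1}) - f(z) \;\leq\; \epsilon_2^i + \Big(\epsilon_1^i - \tfrac{1}{s}\,r^{i+1}\Big)^{\!\top}(z - x^{i+1}) + \tfrac{1}{2s}\norm{y^i - z}_2^2 - \tfrac{1}{2s}\norm{x^{i+1} - z}_2^2,
\]
where a nonpositive remainder proportional to $\norm{x^{i+1} - y^i}_2^2$ has been discarded: the stepsize bounds $s \leq 1/L$ (Model~\ref{ErrModel1}) and $s \leq 1/(L+\delta)$ (Model~\ref{ErrModel2}), via Lemma~\ref{Lem:LipsChitz+Errors}, make that remainder's coefficient nonnegative.

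Next I would evaluate this inequality at the two anchor points $z = x^\star$ and $z = x^i$, and add $\alpha_i$ times the first to $(\alpha_i^2 - \alpha_i)$ times the second; the weight $\alpha_i^2 - \alpha_i = \alpha_i(\alpha_i-1)$ is nonnegative since $\alpha_i \geq 1$. The left-hand side collapses to $\alpha_i^2 v^{i+1} - (\alpha_i^2 - \alpha_i)v^i = \alpha_i^2 v^{i+1} - \alpha_{i-1}^2 v^i$ by the recursion $\alpha_i^2 - \alpha_i = \alpha_{i-1}^2$; the $\epsilon_2^i$ contributions add to $\alpha_i^2\epsilon_2^i$; and the two inner-product terms combine --- after factoring $\alpha_i$ and inserting the definition $u^{i+1} = x^\star - x^{i+1} + (1-\alpha_i)(x^{i+1} - x^i)$ --- into $\alpha_i(\epsilon_1^i - \tfrac{1}{s}r^{i+1})^\top u^{i+1}$.

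The crux is the quadratic term. What remains on the right is $\tfrac{1}{2s}[\alpha_i(\norm{y^i - x^\star}_2^2 - \norm{x^{i+1} - x^\star}_2^2) + (\alpha_i^2 - \alpha_i)(\norm{y^i - x^i}_2^2 - \norm{x^{i+1} - x^i}_2^2)]$, and I would show this equals $\tfrac{1}{2s}(\norm{u^i}_2^2 - \norm{u^{i+1}}_2^2)$ \emph{exactly}. Expanding each difference via $\norm{a-c}_2^2 - \norm{b-c}_2^2 = (a-b)^\top(a+b-2c)$ and substituting the momentum identity $\alpha_i y^i = (\alpha_i + \alpha_{i-1} - 1)x^i - (\alpha_{i-1}-1)x^{i-1}$ (which is merely $\beta_i = (\alpha_{i-1}-1)/\alpha_i$ rewritten), or equivalently verifying the recursion $u^{i+1} = u^i - \alpha_i(x^{i+1} - y^i)$ and squaring, makes all cross terms cancel identically --- notably this step uses only the momentum formula, not the $\alpha$-recursion. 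Assembling the pieces gives the per-iteration bound $\alpha_i^2 v^{i+1} - \alpha_{i-1}^2 v^i \leq \alpha_i^2\epsilon_2^i + \alpha_i(\epsilon_1^i - \tfrac{1}{s}r^{i+1})^\top u^{i+1} + \tfrac{1}{2s}(\norm{u^i}_2^2 - \norm{u^{i+1}}_2^2)$.

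Finally I would sum this from the initial (momentum-free) step $i = 0$, where $y^0 = x^0$, $\alpha_0 = 1$, $u^0 = x^\star - x^0$, and the coefficient $\alpha_0^2 - \alpha_0$ of $v^0$ vanishes, up to $i = k$; both the $v$-terms and the $\norm{u^i}_2^2$-terms telescope, leaving $\alpha_k^2 v^{k+1} \leq \sum_{i=0}^{k}\alpha_i^2\epsilon_2^i + \sum_{i=0}^{k}\alpha_i(\epsilon_1^i - \tfrac{1}{s}r^{i+1})^\top u^{i+1} + \tfrac{1}{2s}\norm{x^\star - x^0}_2^2 - \tfrac{1}{2s}\norm{u^{k+1}}_2^2$. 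Discarding the last, nonpositive, term and dividing by $\alpha_k^2$ yields~\eqref{Eq:Theorem4}. I expect the only genuinely delicate point to be the exact cancellation in the quadratic identity (and, to a lesser extent, aligning the bookkeeping of the first step with the summation starting at $i=0$); everything else parallels the proof of Theorem~\ref{prop:1}.
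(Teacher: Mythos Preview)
Your proof is correct and follows the same FISTA-type telescoping strategy as the paper's. The one organizational difference: the paper plugs a \emph{single} point $x=\alpha_i^{-1}x^\star+(1-\alpha_i^{-1})x^i$ into the one-step estimate and then uses convexity of $f$ to split $f(x)$, which yields the quadratic telescope $\tfrac{1}{2s\alpha_i^2}\big(\norm{u^i}_2^2-\norm{u^{i+1}}_2^2\big)$ directly (since for that $x$ one has $x-y^i=\alpha_i^{-1}u^i$ and $x-x^{i+1}=\alpha_i^{-1}u^{i+1}$), whereas you evaluate at the two anchors $z=x^\star$ and $z=x^i$ and combine with weights $\alpha_i$ and $\alpha_i^2-\alpha_i$, recovering the same telescope via the identity $u^{i+1}=u^i-\alpha_i(x^{i+1}-y^i)$. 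These are well-known equivalent maneuvers in FISTA analysis; your route sidesteps the extra appeal to convexity of $f$ at the price of the quadratic cancellation you correctly flagged as the delicate step.
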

\begin{proof}See Section \ref{Subsec:ProofEq:Theorem1}\end{proof}
To obtain a bound independent of the particular sequence $\{u^k\}$, we can
apply Cauchy-Schwarz's inequality to the second term of the right-hand side
of~\eqref{Eq:Theorem1} followed by F\'ejer's bound (see Theorem~\ref{Thm:Fejer} in the appendix):
\begin{Corollary}\label{corol:2}
Under the same conditions as Theorem \ref{Theorem1}, the sequence $\{x^k\}$ produced by ~\eqref{Eq:AcceleratedPG} satisfies  
% \begin{equation}
% \begin{split}
%   &f(x^{k+1})-f(x^\star) \leq \frac{1}{\alpha_k^2} 
%   \bigg[\sum_{i=0}^k \alpha_i^2\norm{\epsilon_2^{i}}_2 + \sum_{i=0}^k \alpha_i\norm{u^{i+1}}_2 \Big(\norm{\epsilon_1^{i}}_2  
%   \\&+\sqrt{\frac{2\epsilon_2^{i}}{s}} \Big)+\frac{1}{2s}\norm{x^\star-x^{0}}_{2}^2\bigg]+\sum_{i=0}^k \frac{\alpha_i^2}{s}\norm{r^{i+1}}_2 
%   \bigg(\norm{r^{i+1}}_2+s_i \norm{\epsilon_1^{i}}_2 + C_{\rho,s_{k_0}}\bigg),
% \end{split}
% \label{corol2}
% \end{equation}
\begin{equation}
\begin{split}
   &f(x^{k+1})-f(x^\star) \leq \frac{1}{\alpha_k^2} 
   \bigg[\sum_{i=0}^k \alpha_i^2\epsilon_2^{i} + \sum_{i=0}^k \alpha_i\norm{u^{i+1}}_2 \Big(\norm{\epsilon_1^{i}}_2  
   \\&+\sqrt{\frac{2\epsilon_2^{i}}{s}} \Big)+\frac{1}{2s}\norm{x^\star-x^{0}}_{2}^2\bigg],
\end{split}
\label{corol2}
\end{equation}
where $u^{i} = x^\star- x_{}^i + (1-\alpha_{i-1})(x_{}^{i} -x^{i-1})$, $x^\star$ is any solution of~\eqref{Eq:ProblemRepeated}.%, and $C_{\rho,s_{k_0}} = \sqrt{\frac{2c_2 \rho}{s_{k_0}}} +s_{k_0} c_1 L \rho$.
\end{Corollary}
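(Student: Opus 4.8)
The plan is to derive~\eqref{corol2} directly from the bound~\eqref{Eq:Theorem4} of Theorem~\ref{Theorem1} by relaxing only its middle (cross) term, the one coupling the error vectors $\epsilon_1^{i}-\frac{1}{s}r^{i+1}$ with the auxiliary vectors $u^{i+1}$; the other two terms, $\sum_{i=0}^{k}\alpha_i^2\epsilon_2^{i}$ and $\frac{1}{2s}\norm{x^\star-x^{0}}_2^2$, are carried over verbatim. Since the momentum parameters obey $\alpha_i\geq 1>0$ for all $i\geq 0$, multiplying a scalar inequality by $\alpha_i$ preserves its direction, so it suffices to bound each inner product term by term. First I would apply the Cauchy--Schwarz inequality,
\[
  \Big(\epsilon_1^{i}-\frac{1}{s}r^{i+1}\Big)^\top u^{i+1}
  \;\leq\;
  \Big\|\epsilon_1^{i}-\frac{1}{s}r^{i+1}\Big\|_2\,\big\|u^{i+1}\big\|_2 .
\]

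Next I would split the two error sources inside the norm with the triangle inequality, $\big\|\epsilon_1^{i}-\frac{1}{s}r^{i+1}\big\|_2\leq \big\|\epsilon_1^{i}\big\|_2+\frac{1}{s}\big\|r^{i+1}\big\|_2$, and then convert the residual norm into a function of the proximal inaccuracy by invoking Lemma~\ref{Lem:BackwardProxError}: because $x^{i+1}$ is an $\epsilon_2^{i}$-suboptimal minimizer of the strongly convex objective defining the $\epsilon$-suboptimal proximal set in~\eqref{Eq:ProxApproximate}, that lemma gives $\frac{1}{s}\big\|r^{i+1}\big\|_2\leq\sqrt{2\epsilon_2^{i}/s}$. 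Substituting this estimate, multiplying by $\alpha_i\geq 0$, summing over $i=0,\dots,k$, reinserting into~\eqref{Eq:Theorem4} and dividing by $\alpha_k^2$ produces exactly~\eqref{corol2}. The stepsize conditions ($s\leq 1/L$ under Model~\ref{ErrModel1}, $s\leq 1/(L+\delta)$ under Model~\ref{ErrModel2}) and the hypotheses on $\{\alpha_k\}$ are inherited from Theorem~\ref{Theorem1} and need not be revisited here; only the positivity $\alpha_i\geq 1$ is used. If one further wished to remove the dependence on $\{u^{i+1}\}$ altogether, one would at this stage bound $\big\|u^{i+1}\big\|_2$ using F\'ejer's inequality (Theorem~\ref{Thm:Fejer}), just as the analogous step is done for Corollary~\ref{corol:1}.

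There is no serious obstacle here, since the corollary is essentially a one-line relaxation of Theorem~\ref{Theorem1}; the main point requiring care is bookkeeping, namely ensuring that the coefficient multiplying $\sqrt{\epsilon_2^{i}}$ comes out as $\sqrt{2\epsilon_2^{i}/s}$ rather than $\sqrt{2\epsilon_2^{i}}/s$, which hinges on applying Lemma~\ref{Lem:BackwardProxError} with the normalization consistent with~\eqref{Eq:ProxApproximate} (i.e.\ with the quadratic penalty weighted so that the prox subproblem is $\frac{1}{s}$-strongly convex in the $h$-scale). It is also worth recording, as the surrounding discussion emphasizes, that this bound is strictly weaker than~\eqref{Eq:Theorem4}: Cauchy--Schwarz discards the sign information carried by the inner products, so any cancellation among $\epsilon_1^{i}$, $r^{i+1}$ and $u^{i+1}$ is lost --- this is the price paid for a bound whose error contribution is expressed purely through the magnitudes $\norm{\epsilon_1^{i}}_2$, $\epsilon_2^{i}$ and $\norm{u^{i+1}}_2$.
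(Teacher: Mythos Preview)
Your proposal is correct and follows essentially the same approach as the paper: start from~\eqref{Eq:Theorem4}, apply Cauchy--Schwarz to each cross term $\big(\epsilon_1^{i}-\tfrac{1}{s}r^{i+1}\big)^\top u^{i+1}$, split via the triangle inequality, and replace $\tfrac{1}{s}\norm{r^{i+1}}_2$ by $\sqrt{2\epsilon_2^{i}/s}$ using Lemma~\ref{Lem:BackwardProxError}. Your write-up is in fact cleaner than the paper's, which contains a few typographical slips in the intermediate display.
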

\begin{proof}See Section \ref{Subsec:Proofcorol2}\end{proof}
Ignoring second order error terms (for small square summable perturbations and very small suboptimality stopping criterion, i.e., $\rho \approx 0$), \eqref{corol2} can be approximated by
\begin{equation}
\begin{split}
   &f(x^{k+1})-f(x^\star) \lessapprox \frac{1}{\alpha_k^2} 
   \bigg[\sum_{i=0}^k \alpha_i^2\norm{\epsilon_2^{i}}_2 + \sum_{i=0}^k \alpha_i \Big(\norm{\epsilon_1^{i}}_2  
   +\sqrt{\frac{2\epsilon_2^{i}}{s}} \Big)\norm{x^{0}-x^\star}_2\\&+\frac{1}{2s}\norm{x^\star-x^{0}}_{2}^2\bigg].
\end{split}
\label{corol2:2}
\end{equation}

Notice that if we trivially choose $\beta_k = 0$ we recover back the nonaccelerated basic scheme. In the noise-free case, \eqref{Eq:Theorem1} and~\eqref{corol2} reduce to $\frac{1}{2s\alpha_k^2}\norm{x^\star-x^0}_{2}^2$, which coincides with the convergence rate of the accelerated proximal gradient algorithm \cite[Thm.\ 10.34]{beck2017first}, i.e, $O({1}/{k^2})$ if $\alpha_k$ is in the order of $O(k)$.

For non-zero errors, the latter convergence rate still holds when both $\{\alpha_{k}^2\epsilon_2^{k}\}$ and $\{\alpha_{k}\norm{\epsilon_1^{k}}_2\}$ are summable, which is a stronger requirement than what was required in the basic (nonaccelerated) case since at iteration $k$ the errors $\epsilon_1$ and $\epsilon_2$ get amplified by factors proportional to $O(k)$ and $O(k^2)$, respectively.  As a result, if $\alpha_k \propto k$, then a sufficient condition for convergence is for  $\norm{\epsilon_1}_2$ to decrease as $O({1}/{k^{2+\varsigma}})$ and  we only require $\sqrt{\epsilon_2}$ to decrease as $O({1}/{k^{1.5+\varsigma}})$\footnote{Equivalently $\epsilon_2 \propto O(\frac{1}{k^{3+\varsigma}})$} instead of $O({1}/{k^{2+\varsigma}})$ as in \cite[Proposition.\  2]{schmidt2011convergence} with $\varsigma > 0$. Also, we obtain a better factor in general. For instance, if both $\norm{\epsilon_1}_2$ and $\epsilon_2$ decrease as $O({1}/{k^2})$ then an overall convergence rate of $O({\log k}/{k^2})$ is obtained using \eqref{corol2} as compared to $O({\log ^2 k}/{k^2})$ in \cite[Proposition.\  2]{schmidt2011convergence}.

Finally, when both error sequences are nonsummable and inaccessible (cannot be measured) at iteration $k$, then none of the above obtained bounds nor the results of \cite{schmidt2011convergence} can give an accurate estimate of the convergence or divergence rates because of the dependence on error realizations.

\subsubsection{Random errors} The following result gives an estimate of the convergence rate when both errors are stochastic and bounded following a probabilistic analysis approach.
\begin{Theorem} [\textbf{Accelerated with random errors}]\label{Theorem2}
Consider problem~\eqref{Eq:ProblemRepeated} and let Assumption~\ref{Ass:OptimizationProblem} hold. Suppose that the rounding error $\{\epsilon_{1_\Omega}^k\}_{k\geq 1}$ and residual error $\{r_{_\Omega}^{k}\}_{k\geq 1}$ sequences satisfy Assumptions~\ref{Assum:RandomVectors} and \ref{Assum:RandomVectors2}, respectively. Let the norm of the iterative difference $\norm{x^{k}-x^{k-1}}_2$ be summable. Then, for all $\gamma > 0$, the the sequence generated by the approximate accelerated PG in~\eqref{Eq:AcceleratedPG} with constant stepsize $s_k := s$ and with the following choices:
\begin{itemize}
    \item $\beta_k = \frac{\alpha_{k-1}-1}{\alpha_k}$
    \item $\alpha_k \geq 1 \quad \forall \quad k > 0$ and $\alpha_0 = 1$
    \item $\alpha_k^2-\alpha_k = \alpha_{k-1}$
    \item $\{\alpha_k\}_{k=0}^\infty$ increases as $o(k)$
\end{itemize}
for all $k$, satisfies  
% \begin{equation}
%   f(x_{_\Omega}^{k+1})-f(x^\star) \leq \frac{1}{\alpha_k^2} \bigg[\sum_{i=0}^k \alpha_i^2\mathbb{E}\big(\epsilon_{2_\Omega}^{i}\big) +( \gamma|\delta|\sqrt{n}+\sqrt{2s\varepsilon_0})\norm{x^\star-x^0}_{2} \sqrt{\frac{k(k + 1)(2k + 1)}{6}} +\frac{1}{2s}\norm{x^\star-x^0}_{2}^2\bigg]
%   \label{Eq:Theorem5},
% \end{equation}
\begin{equation}
   f(x_{_\Omega}^{k+1})-f(x^\star) \leq \frac{1}{\alpha_k^2} \bigg[S_{\epsilon_{2_\Omega}}+S_{r_{_\Omega}}+S_{\epsilon_{1_\Omega}} +\frac{1}{2s}\norm{x^\star-x^0}_{2}^2\bigg]
   \label{Eq:Theorem5},
\end{equation}
where 
% \begin{align}
%     S_{\epsilon_{2_\Omega}}&=\mathbb{E}(\sum_{i=0}^{k}\alpha_{i}^2\epsilon_{2_\Omega}^{i}) + \frac{\gamma}{2}\sum_{i=1}^{k}i^{2}\epsilon_{2_\Omega}^i,\\
%     S_{\epsilon_{1_\Omega}}&=\gamma|\delta|\sqrt{n}\sum_{i=1}^{k} i\norm{u_{_\Omega}^i}_{2},\\
%     S_{r_{_\Omega}}&=\gamma\sqrt{2s}\sum_{i=1}^{k} i\norm{u_{_\Omega}^i}_{2}\sqrt{\epsilon_2^i}.
% \end{align}
\begin{align}
    S_{\epsilon_{2_\Omega}}&=\mathbb{E}\big[\sum_{i=0}^{k}{i}^2\epsilon_{2_\Omega}^{i}\big] + \frac{\gamma}{2}\sqrt{\sum_{i=1}^{k}i^{4}(\epsilon_{2_\Omega}^i)^2},\\
    S_{\epsilon_{1_\Omega}}&=\gamma|\delta|M_{\nabla g}\sqrt{n\sum_{i=1}^{k} i^2\norm{u_{_\Omega}^i}_{2}^2},\\
    S_{r_{_\Omega}}&=\gamma\sqrt{\frac{2}{s}\sum_{i=1}^{k} i^2\norm{u_{_\Omega}^i}_{2}^2\epsilon_2^i}.
\end{align}
with probability at least $1-6\exp(-\gamma^2/2)$, where $u_{_\Omega}^{i} = x^\star- x_{}^i + (1-\alpha_{i-1})(x_{}^{i} -x^{i-1})$, $x^\star$ is any solution of~\eqref{Eq:ProblemRepeated},  $M_{\nabla g} = 1$, $s \leq 1/L$ for absolute error Model \ref{ErrModel1}, and $M_{\nabla g} = \underset{i \in \mathbb{N}_{+}} \sup\bigg\{\norm{\nabla g(x^i)}_{\infty}\bigg\}$, $s_k := s \leq 1/(L+\delta)$ for relative error Model \ref{ErrModel2}. $\mathbb{E}[.]$ stands for the expectation operator.
\end{Theorem}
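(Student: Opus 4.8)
The plan is to lift the pathwise deterministic estimate of Theorem~\ref{Theorem1} to a high-probability statement: I would apply \eqref{Eq:Theorem4} realization-wise, then identify martingale structure in the three error-dependent sums on its right-hand side and control each by an Azuma--Hoeffding concentration inequality, finishing with a union bound. Since the hypotheses imposed on $\{\alpha_k\}$, $\beta_k$, and $s$ in Theorem~\ref{Theorem2} are (a tightening of, $o(k)$ in place of $O(k)$) those of Theorem~\ref{Theorem1}, inequality \eqref{Eq:Theorem4} holds for every $\omega\in\Omega$ with $\epsilon_1^i,\epsilon_2^i,r^{i+1},u^{i+1}$ replaced by $\epsilon_{1_\Omega}^i,\epsilon_{2_\Omega}^i,r_\Omega^{i+1},u_\Omega^{i+1}$, giving $\alpha_k^2\big(f(x_\Omega^{k+1})-f(x^\star)\big)\le A_k+B_k+C_k+\tfrac{1}{2s}\norm{x^\star-x^0}_2^2$, where $A_k:=\sum_{i=0}^k\alpha_i^2\epsilon_{2_\Omega}^i$, $B_k:=\sum_{i=0}^k\alpha_i\,{\epsilon_{1_\Omega}^i}^\top u_\Omega^{i+1}$, and $C_k:=-\tfrac1s\sum_{i=0}^k\alpha_i\,{r_\Omega^{i+1}}^\top u_\Omega^{i+1}$. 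It then suffices to bound $A_k$, $B_k$, $C_k$ above, each off an event of probability $2e^{-\gamma^2/2}$, and divide by $\alpha_k^2$.

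For $B_k$, working with the filtration $\mathcal F_i:=\sigma(\epsilon_{1_\Omega}^1,\dots,\epsilon_{1_\Omega}^i)$ I would use the zero-mean property \eqref{Eq:AssumptionRandomZeroMean} and the orthogonality relation \eqref{Eq:AssumptionRandomInnerProduct} to show that, once the deterministic-$x^\star$ and previous-iterate pieces of $u_\Omega^{i+1}$ (which are conditionally mean zero) are peeled off, the partial sums of $\alpha_i\,{\epsilon_{1_\Omega}^i}^\top u_\Omega^{i+1}$ form a martingale. Cauchy--Schwarz with the almost-sure bound \eqref{Eq:AssumptionRandomBounded} (resp. \eqref{Eq:AssumptionRandomBounded2}) on $\epsilon_{1_\Omega}^i$ bounds the $i$-th increment by $\alpha_i\sqrt n\,|\delta|M_{\nabla g}\norm{u_\Omega^{i+1}}_2$ (with $M_{\nabla g}=1$ under Model~\ref{ErrModel1}), so Azuma--Hoeffding gives $B_k\le\gamma|\delta|M_{\nabla g}\sqrt{n\sum_{i=1}^k i^2\norm{u_\Omega^i}_2^2}=S_{\epsilon_{1_\Omega}}$ off an event of probability $2e^{-\gamma^2/2}$ (using $\alpha_i=O(i)$).

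The term $C_k$ is handled in the same way, with Assumption~\ref{Assum:RandomVectors2} (relations \eqref{Eq:ResidAssumptionRandomZeroMean}, \eqref{Eq:ResidAssumptionRandomInnerProduct}) replacing Assumption~\ref{Assum:RandomVectors} and Lemma~\ref{Lem:BackwardProxError} used to bound $\norm{r_\Omega^{i+1}}_2$ by $\sqrt{2s\,\epsilon_{2_\Omega}^i}$; this makes the $i$-th increment at most $\alpha_i\sqrt{2\epsilon_{2_\Omega}^i/s}\,\norm{u_\Omega^{i+1}}_2$, so $C_k\le\gamma\sqrt{\tfrac2s\sum_{i=1}^k i^2\norm{u_\Omega^i}_2^2\,\epsilon_{2_\Omega}^i}=S_{r_\Omega}$ with the same probability. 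For $A_k$ the summands are nonnegative; centering, $\sum_i\alpha_i^2\big(\epsilon_{2_\Omega}^i-\mathbb E[\epsilon_{2_\Omega}^i]\big)$ is again a martingale with $i$-th increment controlled by $\alpha_i^2\epsilon_{2_\Omega}^i$, and Azuma--Hoeffding produces $A_k\le\mathbb E\big[\sum_{i=0}^k i^2\epsilon_{2_\Omega}^i\big]+\tfrac\gamma2\sqrt{\sum_{i=1}^k i^4(\epsilon_{2_\Omega}^i)^2}=S_{\epsilon_{2_\Omega}}$ off an event of probability $2e^{-\gamma^2/2}$. Union-bounding over these six (two-sided, one per sum) exceptional events and dividing by $\alpha_k^2$ yields \eqref{Eq:Theorem5} with probability at least $1-6e^{-\gamma^2/2}$; the $o(k)$ growth of $\alpha_k$ and the assumed summability of $\norm{x^k-x^{k-1}}_2$ enter only to keep $\norm{u_\Omega^i}_2$, hence the variance proxies $\sum i^2\norm{u_\Omega^i}_2^2$, finite.

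The main obstacle is the martingale verification for the mixed inner-product terms: $u_\Omega^{i+1}$ depends on $x_\Omega^{i+1}$, which in turn depends on $\epsilon_{1_\Omega}^i$ and $r_\Omega^{i+1}$, so the same-index cross terms ${\epsilon_{1_\Omega}^i}^\top x_\Omega^{i+1}$ and ${r_\Omega^{i+1}}^\top x_\Omega^{i+1}$ do not vanish automatically; the orthogonality assumptions \eqref{Eq:AssumptionRandomInnerProduct} and \eqref{Eq:ResidAssumptionRandomInnerProduct} are precisely what neutralizes them, and the filtration must be chosen with some care because the $\epsilon$-suboptimal proximal step is set-valued. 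A secondary technical point is making the centering of $A_k$ rigorous without an a priori uniform bound on $\epsilon_{2_\Omega}^i$, which is why its deviation is expressed through $\sqrt{\sum i^4(\epsilon_{2_\Omega}^i)^2}$ rather than a deterministic constant.
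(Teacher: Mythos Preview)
Your proposal is correct and follows essentially the same route as the paper: start from the pathwise bound \eqref{Eq:Theorem4}, verify the martingale property for the inner-product error sums using Assumptions~\ref{Assum:RandomVectors} and~\ref{Assum:RandomVectors2}, apply Azuma--Hoeffding with increment bounds coming from Cauchy--Schwarz plus Lemma~\ref{Lem:BackwardProxError}, handle the $\epsilon_2$ sum by centering and concentration, and finish with a union bound. The only organizational difference is that the paper groups $B_k$ and $C_k$ into a single martingale $T_k=\sum_i\alpha_i\big(\epsilon_{1_\Omega}^i-\tfrac1s r_\Omega^{i+1}\big)^\top u_\Omega^{i+1}$ and then splits the resulting Azuma bound via Minkowski, and it invokes Hoeffding (Lemma~\ref{lem:4}) rather than Azuma--Hoeffding for $A_k$; your three-martingale decomposition is equivalent and in fact aligns more cleanly with the stated probability $1-6e^{-\gamma^2/2}$.
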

\begin{proof}See Section \ref{Subsec:Proofprop5}\end{proof}
\begin{Corollary} [\textbf{Accelerated with random errors}]\label{Corol1OfTheorem2}
Consider problem~\eqref{Eq:ProblemRepeated} and let the assumptions of Theorem \ref{Theorem2} hold. Let the sequence $\{\norm{u_{_\Omega}^{i}}_2\}$ be upper bounded by $M_u\norm{x^0-x^\star}_2$ with a real finite constant $M_u < \infty$ and let $\varepsilon_0$ be an upper bound on the proximal error, i.e., $\epsilon_{2_\Omega}^k \leq \varepsilon_0$ for all $k$. Then we have, for all $k$,  
% \begin{equation}
%   f(x_{_\Omega}^{k+1})-f(x^\star) \leq \frac{1}{\alpha_k^2} \bigg[\sum_{i=0}^k \alpha_i^2\mathbb{E}\big(\epsilon_{2_\Omega}^{i}\big) +( \gamma|\delta|\sqrt{n}+\sqrt{2s\varepsilon_0})\norm{x^\star-x^0}_{2} \sqrt{\frac{k(k + 1)(2k + 1)}{6}} +\frac{1}{2s}\norm{x^\star-x^0}_{2}^2\bigg]
%   \label{Eq:Theorem5},
% \end{equation}
\begin{equation}
   f(x_{_\Omega}^{k+1})-f(x^\star) \leq \frac{1}{\alpha_k^2} \bigg[S_{\epsilon_{2_\Omega}}+S_{r_{_\Omega}}+S_{\epsilon_{1_\Omega}} +\frac{1}{2s}\norm{x^\star-x^0}_{2}^2\bigg]
   \label{Eq:Corollary5.1},
\end{equation}
where 
\begin{align}
    S_{\epsilon_{2_\Omega}}&=\varepsilon_0\frac{k(k + 1)(2k + 1)}{6} + \frac{\gamma}{2}\varepsilon_0\sqrt{\frac{k(k+1)(2k+1)(3k^2+3k-1)}{30}},\\
    S_{\epsilon_{1_\Omega}}&=\gamma|\delta|M_uM_{\nabla g}\norm{x^0-x^\star}_2\sqrt{\frac{nk(k + 1)(2k + 1)}{6}},\\
    S_{r_{_\Omega}}&=\gamma M_u\norm{x^0-x^\star}_2\sqrt{\frac{2s \varepsilon_0 k(k + 1)(2k + 1)}{6}}.
\end{align}
with probability at least $1-6\exp(-\gamma^2/2)$, where $u_{_\Omega}^{i} = x^\star- x_{}^i + (1-\alpha_{i-1})(x_{}^{i} -x^{i-1})$, $x^\star$ is any solution of~\eqref{Eq:ProblemRepeated},  $M_{\nabla g} = 1$, $s \leq 1/L$ for absolute error Model \ref{ErrModel1}, and $M_{\nabla g} = \underset{i \in \mathbb{N}_{+}} \sup\bigg\{\norm{\nabla g(x^i)}_{\infty}\bigg\}$, $s_k := s \leq 1/(L+\delta)$ for relative error Model \ref{ErrModel2}. 
\end{Corollary}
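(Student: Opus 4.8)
The plan is to obtain Corollary~\ref{Corol1OfTheorem2} as a direct specialization of Theorem~\ref{Theorem2}: no new fact about the algorithm or the concentration machinery is needed, only a deterministic majorization of the three error aggregates $S_{\epsilon_{2_\Omega}}$, $S_{\epsilon_{1_\Omega}}$, $S_{r_{_\Omega}}$ under the additional boundedness hypotheses $\norm{u_{_\Omega}^i}_2 \le M_u\norm{x^0-x^\star}_2$ and $\epsilon_{2_\Omega}^k \le \varepsilon_0$. First I would observe that each of these three quantities is nondecreasing in every $\epsilon_{2_\Omega}^i \ge 0$ and in every $\norm{u_{_\Omega}^i}_2 \ge 0$, since they appear with nonnegative coefficients and inside square roots of sums of nonnegative terms; hence replacing each $\epsilon_{2_\Omega}^i$ by $\varepsilon_0$ and each $\norm{u_{_\Omega}^i}_2$ by $M_u\norm{x^0-x^\star}_2$ can only increase the right-hand side of~\eqref{Eq:Theorem5}. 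For the one averaged term, $\mathbb{E}[\sum_{i=0}^{k} i^2 \epsilon_{2_\Omega}^i]$, I would use instead that $\mathbb{P}(\epsilon_{2_\Omega}^i \le \varepsilon_0)=1$ implies $\mathbb{E}[\epsilon_{2_\Omega}^i]\le\varepsilon_0$, so that $\mathbb{E}[\sum_{i=0}^{k} i^2 \epsilon_{2_\Omega}^i] \le \varepsilon_0 \sum_{i=1}^{k} i^2$.

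After these substitutions every remaining sum is a pure power sum, and I would close it with Faulhaber's formulas $\sum_{i=1}^{k} i^2 = k(k+1)(2k+1)/6$ and $\sum_{i=1}^{k} i^4 = k(k+1)(2k+1)(3k^2+3k-1)/30$. Concretely, $S_{\epsilon_{2_\Omega}} \le \varepsilon_0\sum_{i=1}^{k} i^2 + \tfrac{\gamma}{2}\varepsilon_0\sqrt{\sum_{i=1}^{k} i^4}$ gives the first displayed expression; pulling $M_u\norm{x^0-x^\star}_2$ out of the square roots in $S_{\epsilon_{1_\Omega}}$ and $S_{r_{_\Omega}}$ and substituting $\epsilon_2^i\le\varepsilon_0$ under the root in $S_{r_{_\Omega}}$ reduces both to $\sqrt{\sum_{i=1}^{k} i^2}$ times explicit constants, and invoking the closed form for $\sum_{i=1}^{k} i^2$ yields the stated forms of $S_{\epsilon_{1_\Omega}}$ and $S_{r_{_\Omega}}$. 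Plugging these three majorants back into~\eqref{Eq:Theorem5} in place of the original aggregates produces~\eqref{Eq:Corollary5.1}.

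Finally, the probability $1-6\exp(-\gamma^2/2)$ is inherited unchanged: Theorem~\ref{Theorem2} already guarantees~\eqref{Eq:Theorem5} on an event $\mathcal{E}$ of that probability, and the majorizations above are deterministic inequalities valid pointwise on $\mathcal{E}$ (the bound on $\norm{u_{_\Omega}^i}_2$ is assumed to hold for all $k$, and $\epsilon_{2_\Omega}^i\le\varepsilon_0$ almost surely), so chaining them on $\mathcal{E}$ preserves both the inequality and its probability. I do not anticipate a genuine obstacle here; the only points requiring a little care are (i) verifying the claimed monotonicity so that the uniform substitution is legitimate, (ii) treating the averaged proximal-error term via $\mathbb{E}[\epsilon_{2_\Omega}^i]\le\varepsilon_0$ rather than a pathwise bound, and (iii) matching the algebra of the $\sqrt{2/s}$, $\sqrt{n}$, $|\delta|$ and $M_{\nabla g}$ factors against the power-sum closed forms so that the constants in the statement come out exactly.
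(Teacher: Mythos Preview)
Your proposal is correct and matches the paper's own argument essentially line for line: the paper's proof simply substitutes the bounds $\norm{u_{_\Omega}^{i}}_2 \leq M_u\norm{x^0-x^\star}_2$ and $\epsilon_{2_\Omega}^i\le\varepsilon_0$ into the aggregates of Theorem~\ref{Theorem2} and then applies the closed forms $\sum_{i=1}^k i^2 = k(k+1)(2k+1)/6$ and $\sum_{i=1}^k i^4 = k(k+1)(2k+1)(3k^2+3k-1)/30$. Your write-up is in fact more careful than the paper's (you make explicit the monotonicity justification and the handling of the expectation term), but the route is identical.
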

\begin{proof}
Substituting for $\sum_{i=1}^{k} i^{2}$ by
\begin{equation}
\sum_{i=1}^{k} i^{2} = \frac{k(k + 1)(2k + 1)}{6},
\end{equation}
and substituting for $\sum_{i=1}^{k} i^{4}$ by,
\begin{equation}
\sum_{i=1}^{k} i^{4} = \frac{k(k+1)(2k+1)(3k^2+3k-1)}{30},
\end{equation}
and using $\norm{u_{_\Omega}^{i}}_2 \leq M_u\norm{x^0-x^\star}_2$, $\norm{\epsilon_{1_\Omega}^{i}}_2 \leq |\delta|M_{\nabla g}\sqrt{n}$ in Theorem~\ref{Theorem2} completes the proof.
\end{proof}
It is not surprising that in the absence of any perturbations, both probabilistic and deterministic analyses lead to the optimal convergence rate of $O({1}/{k^2})$ for the accelerated scheme \eqref{Eq:Theorem4}-\eqref{Eq:Theorem5}. However, as stated previously in Corollary \ref{corol:2}, under the influence of computational inexactness and due to error amplification, acceleration has a counter-effect in the Nesterov's sense \cite{nesterov1983method} and the method becomes more sensitive to noise whenever we want to speed up the PG algorithm.

Although computational errors are deterministic in nature \cite{higham2002accuracy}, probabilistic results such as \eqref{Eq:Theorem5} give us practical convergence bounds when errors cannot be measured or are undetectable but with known upper bounds. If the ensemble mean $\mathbb{E}[\epsilon_{2_\Omega}^{i}]$ is constant for all $i \geq 1$ in \eqref{Eq:Theorem5}, i.e, the error sequence $\{\epsilon_{2_\Omega}^k\}$ is stationary, then \eqref{Eq:Theorem5} becomes totally independent from the instantaneous running errors $\epsilon_{1_\Omega}^{i}$, $\epsilon_{2_\Omega}^{i}$ as well as from the running iterates $x_{_\Omega}^{i}$ and would be only determined by the machine precision $\delta$, the tolerance $\mathbb{E}[\epsilon_{2_\Omega}^{}]$ and the given probability parameter $\gamma$. The factor $\alpha_k$ is designed to be proportional to the iteration counter $o(k)$.

Note that for this design, i.e., $\alpha_k \propto o(k)$, the gradient error term \\$\frac{\gamma|\delta|}{\alpha_k^2}\norm{x^\star-x^0}_{2}\sqrt{\frac{nk(k + 1)(2k + 1)}{6}}$ in \eqref{Eq:Theorem5} continues to decrease as $O({1}/{\sqrt{k}})$ (since ${1}/{\alpha_k^2} \propto O(1/k^2)$) as in the basic case without summability assumption on $\{\epsilon_1^k\}$ but we still cannot guarantee convergence unless $\{\alpha_{k}^2E\big(\epsilon_{2_\Omega}^{k}\big)\}$ is summable or equivalently $E\big(\epsilon_{2_\Omega}^{k}\big)$ decreases as $O({1}/{k^{2+\varsigma}})$.
We also note that the absolute convergence of the algorithm rests upon the summability of $\{\alpha_{k}^2E\big(\epsilon_{2_\Omega}^{k}\big)\}$  and errors boundedness without any additional (stronger) requirements. Furthermore, in order to achieve an optimal convergence rate of $O({1}/{k^2})$ at a fixed dimensionality of the problem $n$, we require $\delta$ to decrease (i.e., increase machine precision) as $O({1}/{k^{1.5+\varsigma}})$ instead of $O({1}/{k^{2+\varsigma}})$ \cite{schmidt2011convergence}.  Consequently, if we let $\norm{\epsilon_{1_\Omega}^{k}}_2$ (or equivalently $\delta$) and $\sqrt{\epsilon_{2_\Omega}^{k}}$ to decrease as $O({1}/{k^2})$ we recover the optimal rate $O({1}/{k^2})$ instead of $O({\log^2 k}/{k^2})$ \cite{schmidt2011convergence}. To the lower limit, if $\norm{\epsilon_{1_\Omega}^{k}}_2$ (or equivalently $\delta$) and $\sqrt{\epsilon_{2_\Omega}^{k}}$ decrease as $O({1}/{k^{1.5}})$ we achieve an asymptotic convergence rate of $O({\log k}/{k^2})$ which is still less conservative than the latter even for worse computational errors. The algorithm fails to converge with non-summable proximal error. 

In summary, although boundedness of the gradient error is sufficient for the gradient error term $S_{\epsilon_{1_\Omega}}$ to asymptotically vanish, the algorithm fails to converge without the summability of the proximal error term $\{\alpha_{k}^2E\big(\epsilon_{2_\Omega}^{k}\big)\}$. 
\section{Proofs}
\label{Sec:Proofs}
\label{Sec:Proofs}
\subsection{Proof of Lemma~\ref{Lem:LipsChitz+Errors}}
\label{Subsec:ProofLipsChitz+Errors}
%\begin{proof}
Let us show that \textit{Lipschitz} continuity of the gradient still holds with absolute errors. We have for absolute gradient error $\epsilon_1^{k}$
\begin{align}
    \norm{\nabla^{\epsilon_1^{k}} g(y^{k})-\nabla^{\epsilon_1^{k}} g(z^{k})}_2 &=     \norm{\nabla g(y^{k})+\epsilon_1^{k}-\nabla g(z^{k})-\epsilon_1^{k}}_2
    \\
    &= \norm{\nabla g(y^{k})-\nabla g(z^{k}) }_2\leq L  \norm{y^{k}-z^{k}}_2\notag.
\end{align}
The latter cancellation occurs because at instant $k$ the absolute error is the same no matter what the argument of $g$ is. However, for relative errors, i.e., when $\epsilon_1^{k} = \epsilon_1^{k}(y^k)= \kappa \odot \nabla g(y^{k})$ where $|\kappa| \leq \delta$, where $\odot$ stands for the vector element-wise Hadamard product, we have
\begin{align}
    \norm{\nabla^{\epsilon_1^{k}} g(y^{k})-\nabla^{\epsilon_1^{k}} g(z^{k})}_2 &=     \norm{\nabla g(y^{k})+\epsilon_1^{k}(y^{k})-\nabla g(z^{k})-\epsilon_1^{k}(z^{k})}_2
    \\
    &=\norm{\nabla g(y^{k})(1+\kappa)-\nabla g(z^{k})(1+\kappa)}_2\notag
    \\
    &= |1+\kappa|\norm{\nabla g(y^{k})-\nabla g(z^{k}) }_2\leq (1+\delta)L\norm{y^{k}-z^{k}}_2.\notag 
\end{align}
%\end{proof}
\subsection{Proof of Theorem~\ref{prop:1}}
\label{Subsec:Proofprop1}

Recall the definition of $\epsilon$-suboptimal proximal operator
in~\eqref{Eq:ProxApproximate}:
\begin{equation}
  \label{Eq:ProxApproximate-Repetition}    
    \text{prox}_{u}^{\epsilon}(y) 
    := 
    \Big\{
      x \in \mathbb{R}^n\,:\,
      u(x) + \frac{1}{2}\|x -y\|_2^2 \leq \epsilon + \underset{z}{\inf}\,\,
      u(z) + \frac{1}{2}\|z -
      y\|_2^2 
    \Big\}\,.
\end{equation}
Because this is a set, the point $x^{k+1}$ in approximate
proximal gradient~\eqref{Eq:PGDApproximateRepeated} is not defined uniquely.  To bound the
effect of the error $\epsilon_2^{k}$, we will therefore compute its difference
with respect to the case where $\epsilon_2^{k} = 0$, as measured by a function that
we will define shortly.  Recall that $\overline{x}^{k+1}$ is the noiseless
computation of the proximal operator in~\eqref{Eq:PGDApproximateRepeated} at
$x^k$ with constant stepsize $s$:
\begin{align}
  \overline{x}^{k+1} 
  :&=
  \text{prox}_{s h}
  \Big[x^k - s\big(\nabla g(x^k) + \epsilon_1^{k}\big)\Big]
  \label{Eq:ProofThm1-Init1},
  \\
  &=
  \text{prox}_{s h}
  \Big[x^k - s\nabla^{\epsilon_1^{k}} g(x^k)\Big]
  \label{Eq:ProofThm1-Init2}
  \\
  &=
  \underset{x}{\arg\min} \,\,\,  
  g(x^{k}) + \nabla^{\epsilon_1^{k}} g(x^k)^\top (x-x^k)
  +
  \frac{1}{2s}\big\|x-x^k\big\|_{2}^2+h(x) 
  \label{Eq:ProofThm1-Init3}
  \\
  &=:
  \underset{x}{\arg\min} \,\,\,  
  G\big(x,\, x^k\big)\,.
  \label{Eq:ProofThm1-Init4}
\end{align}
From~\eqref{Eq:ProofThm1-Init1} to~\eqref{Eq:ProofThm1-Init2}, we used
$\nabla^{\epsilon_1^{k}}g(x^k) := \nabla g(x^k) + \epsilon_1^{k}$ as the inexact
gradient of $g$ at $x^k$. From~\eqref{Eq:ProofThm1-Init2}
to~\eqref{Eq:ProofThm1-Init3}, we developed the squared $\ell_2$-norm term in
the definition of the proximal operator [cf.\ \eqref{Eq:Prox}] and added
$g(x^k)$ to the objective function. Finally,
from~\eqref{Eq:ProofThm1-Init3} to~\eqref{Eq:ProofThm1-Init4}, we defined
\begin{equation}
  \label{G0}
  G\big(x,\,x^k\big) 
  := 
  g(x^{k}) + \nabla^{\epsilon_1^{k}}
  g(x^k)^\top (x-x^k)+\frac{1}{2s}\big\|x-x^k\big\|_{2}^2
  +
  h(x)\,.
\end{equation}

As $h$ is convex [cf.\ Assumption~\ref{Ass:OptimizationProblem}], the quadratic
term in~\eqref{G0} makes the function $G(\cdot,\, x^k)$ strongly convex
with parameter $1/s$~\cite{beck2017first}.

Recall that $\overline{x}^{k+1}$ is the optimal solution of~\eqref{Eq:ProofThm1-Init4} and that
$x^{k+1}$ is the actual, noisy iterate in~\eqref{Eq:PGDApproximateRepeated}.
Therefore, according to~\eqref{Eq:PGDApproximateRepeated} and to the definition of
the $\epsilon$-suboptimal proximal operator
in~\eqref{Eq:ProxApproximate-Repetition}, 
\begin{align}
  &
  h\big(x^{k+1}\big)
  +
  \frac{1}{2s}\Big\|x^{k+1} - x^k + s \nabla^{\epsilon_1^{k}} g(x^k)\Big\|_2^2
  \leq
  \epsilon_2^{k}
  +
  h\big(\overline{x}^{k+1}\big)
  \label{Eq:ProofTheorem1-RelBetweenNoiselessNoisy1}\\
  \notag&+
  \frac{1}{2s}\Big\|\overline{x}^{k+1} - x^k + s \nabla^{\epsilon_1^{k}}
  g(x^k)\Big\|_2^2
  \\
  \label{Eq:ProofTheorem1-RelBetweenNoiselessNoisy2}  
  \Longleftrightarrow\qquad
  &
  h\big(x^{k+1}\big)
  +
  \frac{1}{2s}\big\|x^{k+1} - x^k\big\|_2^2
  +
  \nabla^{\epsilon_1^{k}} g(x^k)^\top \big(x^{k+1} - x^k\big)
  \leq
  \\
  &\epsilon_2^{k}
  +
  h\big(\overline{x}^{k+1}\big)
  +
  \frac{1}{2s}\big\|\overline{x}^{k+1} - x^k\big\|_2^2
  +\nabla^{\epsilon_1^{k}} g(x^k)^\top \big(\overline{x}^{k+1} - x^k\big)
  \notag
  \\
  \Longleftrightarrow\qquad
  &G\big(x^{k+1},\,x^k\big)
  -
  G\big(\overline{x}^{k+1},\,x^k\big) 
  \leq \epsilon_2^{k}\,.
  \label{suboptimal}
\end{align}
From~\eqref{Eq:ProofTheorem1-RelBetweenNoiselessNoisy1}
to~\eqref{Eq:ProofTheorem1-RelBetweenNoiselessNoisy2}, we developed the
squared-norm terms and cancelled the common term.
From~\eqref{Eq:ProofTheorem1-RelBetweenNoiselessNoisy2} to~\eqref{suboptimal},
we added the constant $g(x^k) - \frac{s}{2}\big\|\nabla g(x^k)\big\|_2^2$ to both sides and used the definition~\eqref{G0}. Notice
that~\eqref{suboptimal} bounds the distance between $x^{k+1}$ and
$\overline{x}^{k+1}$ as measured by $G(\cdot,\, x^k)$.

Because $G(\cdot,\,x^k)$ is strongly convex, Theorem~\ref{thm:2} in the
Appendix establishes that 
\begin{equation}
  \label{Eq:ProofTheorem1-ApplicationOfStrongConv}
  G\big(x,\, x^k\big) - G\big(\overline{x}^{k+1},\, x^k\big) 
  \geq 
  \frac{1}{2s} \big\|x - \overline{x}^{k+1}\big\|_2^2\,,
\end{equation}
for any $x \in \mathbb{R}^n$. In particular, it holds for any optimal solution
$x^\star$ of~\eqref{Eq:ProblemRepeated}.

Thus, subtracting~\eqref{Eq:ProofTheorem1-ApplicationOfStrongConv} with $x =
x^\star$ from~\eqref{suboptimal} yields
\begin{align}
  &
  G\big(x^{k+1},\,x^k\big)
  -
  G\big(x^\star,\,x^k\big) 
  \leq 
  \epsilon_2^{k}
  -
  \frac{1}{2s}\big\|x^\star-\overline{x}^{k+1}\big\|_2^2
  \label{27}
  \\
  \Longleftrightarrow\qquad
  &
  g(x^k) 
  + 
  \nabla^{\epsilon_1^{k}} g(x^k)^\top \big(x^{k+1}-x^k\big)
  +
  \frac{1}{2s}\big\|x^{k+1}-x^k\big\|_{2}^2 + h\big(x^{k+1}\big)
  \label{Eq:ProofTheorem1-ApplyingDefinitionG}
  \\
  \notag
  &
  - 
  G\big(x^\star,\,x^k\big)
  \leq 
  \epsilon_2^{k}
  -
  \frac{1}{2s}\big\|x^\star-\overline{x}^{k+1}\big\|_2^2
  \\
  \Longleftrightarrow\qquad
    &g(x^k) 
  + 
  \nabla g(x^k)^\top \big(x^{k+1}-x^k\big) 
  + {\epsilon_1^{k}}^\top \big(x^{k+1}-x^k\big)
  \label{Eq:ProofTheorem1-BeforeConvex}
  \\
  \notag
  &
  +
  \frac{1}{2s}\big\|x^{k+1}-x^k\big\|_{2}^2 + h\big(x^{k+1}\big)
  - 
  G\big(x^\star,\,x^k\big)
  \leq 
  \epsilon_2^{k} 
  -
  \frac{1}{2s}\big\|x^\star-\overline{x}^{k+1}\big\|_2^2.
\end{align}
From~\eqref{27} to~\eqref{Eq:ProofTheorem1-ApplyingDefinitionG}, we simply used
the definition of $G(x,\, x^k)$ in~\eqref{G0} with $x = x^{k+1}$ and we also used
$\nabla^{\epsilon_1^{k}}g(x^k) := \nabla g(x^k) + \epsilon_1^{k}$ in \eqref{Eq:ProofTheorem1-BeforeConvex}. 
From~\eqref{Eq:ProofTheorem1-ApplyingDefinitionG}
to~\eqref{Eq:ProofTheorem1-BeforeConvex}, we
used~\eqref{Eq:LipschitzContinuityAlt}, which follows from the fact that $g$
has a \textit{Lipschitz} gradient with constant $L$ (and $s \leq 1/L$), with
$x=x^k$ and $y = x^{k+1}$.

Applying \eqref{Eq:LipschitzContinuityAlt} to \eqref{Eq:ProofTheorem1-BeforeConvex} (with $s \leq 1/L$) and using $f:=g+h$, we obtain
\begin{align}
  &
  g\big(x^{k+1}\big) + h\big(x^{k+1}\big)
  - 
  G\big(x^\star,\,x^{k}\big) 
  \leq
  \epsilon_2^{k}
  -
  \frac{1}{2s}\big\|x^\star-\overline{x}^{k+1}\big\|_2^2
  \label{Eq:ProofTheorem1-StrongConvg}
  \\
  \notag
  &
  + 
  {\epsilon_1^{k}}^\top \big(x^k - x^{k+1}\big),
  \\
  \Longleftrightarrow\qquad
  &
  f\big(x^{k+1}\big)
  - 
  G\big(x^\star,\,x^{k}\big) 
  \leq
  \epsilon_2^{k}
  -
  \frac{1}{2s}\big\|x^\star-\overline{x}^{k+1}\big\|_2^2
  + 
  {\epsilon_1^{k}}^\top \big(x^k - x^{k+1}\big)\,.
  \label{G-G(34)}
\end{align}
We now expand $G(x^\star,\,x^k)$ in~\eqref{G-G(34)} as follows 
\begin{equation}
\begin{split}
  f(x^{k+1})
  -
  g(x^k) - \nabla^{\epsilon_1^{k}} g(x^k)^\top (x^\star-x^k)
  -
  \frac{1}{2s}\big\|x^\star-x^k\big\|_{2}^2
  -
  h(x^\star)
  \\
  \leq
  \epsilon_2^{k}
  -
  \frac{1}{2s}\big\|x^\star-\overline{x}^{k+1}\big\|_2^2
  + 
  {\epsilon_1^{k}}^\top \big(x^k - x^{k+1}\big)\,.
\end{split}
\end{equation}
Rearranging and subtracting $g(x^\star)$ from both sides yields
\begin{equation}
\begin{split}
  \label{Eq:ProofTheorem1-Aggregating1}
  f(x^{k+1}) 
  -
  h(x^\star)
  -
  g(x^\star) 
  &\leq
  -g(x^\star) 
  + 
  \epsilon_2^{k}
  -
  \frac{1}{2s}\big\|x^\star-\overline{x}^{k+1}\big\|_2^2 
  + 
  g(x^k) 
  \\
  &\quad+ 
  \nabla^{\epsilon_1^{k}} g(x^k)^\top (x^\star-x^k)
  +
  \frac{1}{2s}\big\|x^\star-x^k\big\|_2^2
  \\
  &\quad+{\epsilon_1^{k}}^\top \big(x^k - x^{k+1}\big)\,.
\end{split}
\end{equation}
Using the definitions $f:= g + h$ and $\nabla^{\epsilon_1^{k}} g(x^k) = \nabla
g(x^k) +  \epsilon_1^{k}$ in~\eqref{Eq:ProofTheorem1-Aggregating1}, we obtain 
\begin{equation}
\begin{split}
  f(x^{k+1}) - f(x^\star) 
  &\leq 
  \epsilon_2^{k}
  - g(x^\star) 
  + g(x^k) 
  + \nabla g(x^k)^\top \big(x^\star-x^k\big) 
  \\
  &\quad-\frac{1}{2s}\big\|x^\star-\overline{x}^{k+1}\big\|_2^2
  +\frac{1}{2s}\big\|x^\star-x^k\big\|_2^2 
  + {\epsilon_1^{k}}^\top\big(x^\star-x^{k}\big)
  %\label{f_hat_x-f}
  \\
  &\quad+{\epsilon_1^{k}}^\top \big(x^k - x^{k+1}\big)\,
  \\
  &\leq
  \epsilon_2^{k}
  -
  \frac{1}{2s}\big\|x^\star-\overline{x}^{k+1}\big\|_2^2
  +
  \frac{1}{2s}\big\|x^\star-x^k\big\|_2^2
  +
  {\epsilon_1^{k}}^\top\big(x^\star-x^{k+1}\big)\,,
  \label{Eq:ProofTheorem1-LastStepBeforeSumming}
\end{split}
\end{equation}
where in the second inequality we used the fact that $g$ is convex, i.e., $g(x^\star)
\geq g(x^k) + \nabla g(x^k)^\top  (x^\star - x^k)$.
Summing both sides of~\eqref{Eq:ProofTheorem1-LastStepBeforeSumming} from $0$
to $k$, 
\begin{equation}
	\begin{split}
	\sum_{i=0}^{k}\big[f(x^{i+1}) - f(x^\star)\big] 
		&\leq 
		\sum_{i=0}^{k}\epsilon_2^{i}
		+ 
		\sum_{i=0}^{k}{\epsilon_1^{i}}^\top\big(x^\star-x^{i+1}\big)
    	\\
    	&\quad
	    +	\frac{1}{2s}\sum_{i=0}^{k}
		\Big[
			\big\|x^\star-x^i\big\|_{2}^2
			-			\big\|x^\star-\overline{x}^{i+1}\big\|_2^2
		\Big],
		\\
		&=  
		\sum_{i=0}^{k}\epsilon_2^{i}
		+ 
		\sum_{i=0}^{k}{\epsilon_1^{i}}^\top\big(x^\star-x^{i+1}\big)
		+\frac{1}{2s}\sum_{i=0}^{k}
		\Big[
			\big\|x^\star-x^i\big\|_{2}^2
			\\
		    &\quad 
			-\big(\big\|x^\star-x^{i+1}\big\|_2^2+
			\big\|x^{i+1}-\overline{x}^{i+1}\big\|_2^2
			\\
			&\quad+2
			(x^{i+1}-\overline{x}^{i+1})^\top(x^\star-x^{i+1})
		\big)\Big],
		\\
		&=  
		\sum_{i=0}^{k}\epsilon_2^{i}
		+ 
		\sum_{i=0}^{k}{\epsilon_1^{i}}^\top\big(x^\star-x^{i+1}\big)
	    +\frac{1}{2s}\sum_{i=0}^{k}
		\Big[
			\big\|x^\star-x^i\big\|_{2}^2
			\\
		    &\quad
			-\big(\big\|x^\star-x^{i+1}\big\|_2^2+
			\big\|r^{i+1}\big\|_2^2+2
			(r^{i+1})^\top(x^\star-x^{i+1})
		\big)\Big],
		\\
		&=
		\sum_{i=0}^{k}\epsilon_2^{i}
		+ 
		\sum_{i=0}^{k}(\epsilon_1^{i}-\frac{1}{s}r^{i+1})^\top\big(x^\star-x^{i+1}\big)
		+
		\frac{1}{2s}
		\Big[
			\big\|x^\star-x^0\big\|_{2}^2
			\\
		    &\quad
			-
			\big\|x^\star-x^{k+1}\big\|_2^2
		\Big]
		 - \frac{1}{2s}\sum_{i=0}^{k}\big\|r^{i+1}\big\|_2^2\,,
		\label{Eq:ProofTheorem1-StepBeforeJensen}
	\end{split}
\end{equation}
where in the second-to-last inequality we used the definition of $r^i$ in \eqref{Eq:ResidualVec}, and in the last equality we noticed that the quadratic terms involving $x^\star$ formed a telescopic sequence. Rearranging and moving negative terms to the left hand side results in 
\begin{equation}
	\label{Eq:ProofTheorem1-StepBeforeJensen}
	\begin{split}
		\sum_{i=0}^{k}\big[f(x^{i+1}) - f(x^\star)\big] 
		+\frac{1}{2s}\sum_{i=0}^{k}\big\|r^{i+1}\big\|_2^2 +\frac{1}{2s}\big\|x^\star-x^{k+1}\big\|_2^2
		&\leq 
		\sum_{i=0}^{k}\epsilon_2^{i}
		\\
		+\sum_{i=0}^{k}(\epsilon_1^{i}-\frac{1}{s}r^{i+1})^\top\big(x^\star-x^{i+1}\big)
		+\frac{1}{2s}\big\|x^\star-x^0\big\|_{2}^2.
	\end{split}
\end{equation}
Since $f$ is a convex function, Jensen's inequality implies
\begin{equation*}
    f\bigg(\frac{1}{k+1}\sum_{i=0}^{k}x^{i+1}\bigg)  -f(x^\star) \leq
    \frac{1}{k+1} \sum_{i=0}^{k}\big[f(x^{i+1})  -f(x^\star)\big]\,,
\end{equation*}
which, applied to~\eqref{Eq:ProofTheorem1-StepBeforeJensen} and together with the fact that the last two terms of the left-hand side of \eqref{Eq:ProofTheorem1-StepBeforeJensen} are nonnegative, yields the
statement of the theorem:
\begin{align}
    %\label{main_result}
    f\bigg(\frac{1}{k+1}\sum_{i=1}^{k}x^{i+1}\bigg)  -f(x^\star)    +\frac{1}{2(k+1)s}\sum_{i=0}^{k}\big\|r^{i+1}\big\|_2^2 +\frac{1}{2(k+1)s}\big\|x^\star-x^{k+1}\big\|_2^2
    \leq \notag\\ 
    \frac{1}{k+1}
    \Big[
      \sum_{i=0}^{k}\epsilon_2^{i} 
      + 
      \sum_{i=0}^{k}(\epsilon_1^{i}-\frac{1}{s}r^{i+1})^\top(x^\star-x^{i+1})
    +
    \frac{1}{2s}\norm{x^\star-x^0}_{2}^2\Big]\,. \label{Eq:MainDeterministic}
\end{align}
\hfill\qed

\subsection{Proof of Corollary~\ref{corol:1}}
\label{Subsec:Proofcorol1}
We will use Lemma~\ref{Lem:BackwardProxError} to bound the norm of the residual error $r^{k} = x^{k}-\overline{x}^{k}$ resulting from the proximal error $\epsilon_2^{k}$.
%\begin{proof}
% \begin{Lemma}\label{Lem:BackwardProxError} Let $G$: $\mathbb{R}^n \times \mathbb{R}^n \rightarrow (-\infty, \infty]$ be a proper, closed, and
% $L$-strongly convex function defined by \eqref{G0}, with $L,s > 0$. Let $y^\star$ and $\widehat{y}^\star$ be the optimal and sub-optimal solutions of the following problem 
% \begin{equation}
%     \underset{y \in \mathbb{R}^n}{\min} \,\,\,  
%     G\big(y,\, x\big):=  g(y) + \nabla g(y)^\top (x-y)+\frac{L}{2}\norm{x-y}_{2}^2+h(x), \notag
% \end{equation}  
% i.e., $y^\star:=\underset{y}{\arg\min} \,\,\,  
%   G(y,\, x)\,$ and $\widehat{y}^\star \in \{y : G(y,x)-G(y^\star,x) \leq \epsilon_2\}$. Then,
% \begin{equation}
%      \big\|\widehat{y}^\star - y^\star\big\|_2 \leq \sqrt{2s\epsilon_2}.\notag  
% \end{equation}
% \end{Lemma}
% \begin{proof}
%  By Assumption \ref{Ass:OptimizationProblem} and \cite{beck2017first}, the function $G$ defined by \eqref{G0} is $L$-strongly convex and, by Theorem \ref{thm:2}, it satisfies, 
%  \begin{equation}
%      \label{lem11}
%      G(y,x)-G(y^\star,x) \geq \frac{1}{2s}\big\|y-y^\star\big\|_2^2,
%  \end{equation}
%  for any $y$. Because using $\widehat{y}^\star$ (sub-optimal) instead of exact $y^\star$ (optimal) results in error $\epsilon_2$ in the following sense,
%  \begin{equation}
%      \label{lem12}
%      G(\widehat{y}^\star,x)-G(y^\star,x) \leq \epsilon_2. \notag
%  \end{equation}
%  Applying \eqref{lem11} with $y=\widehat{y}^\star$ yields
%  \begin{equation}
%      \big\|\widehat{y}^\star-y^\star\big\|_2 \leq \sqrt{2s\epsilon_2}. \notag
% \end{equation}
% \end{proof}
Using Cauchy-Shwartz inequality and the bound from Lemma \ref{Lem:BackwardProxError}, for all $i$, we obtain 
\begin{equation}
    \label{Ineq:residualErrorBoundCS}
    \begin{split}
        (\epsilon_1^{i}-\frac{1}{s}r^{i+1})^\top(x^\star-x^i) &\leq \Big(\norm{\epsilon_1^{i}}_{2}+\frac{1}{s}\norm{r^{i+1}}_{2}\Big)\norm{x^\star-x^i}_{2}
        \\
        &\leq \Big(\norm{\epsilon_1^{i}}_{2}+\sqrt{\frac{2\epsilon_2^{i}}{ s}}\Big)\norm{x^\star-x^i}_{2}.
    \end{split}
\end{equation}

Using  \eqref{Ineq:residualErrorBoundCS} in \eqref{Eq:Theorem1} yields
\begin{equation}
    \begin{split}
        f\bigg(\frac{1}{k+1}\sum_{i=1}^{k}x^{i+1}\bigg)  -f(x^\star) & \leq \frac{1}{k+1} \sum_{i=1}^{k}\epsilon_2^{i}
        \\
        &\quad +\frac{1}{k+1}\sum_{i=1}^{k}\Big(\norm{\epsilon_1^{i}}_{2}+\sqrt{\frac{2\epsilon_2^{i}}{s}}\Big)\norm{x^\star-x^i}_{2}
        \\
        &\quad+\frac{1}{2s(k+1)}\norm{x^\star-x^0}_{2}^2
	\end{split}
\end{equation}
Applying Quasi-F\'ejer (Theorem \ref{Thm:QuasiFejerPG}) recursively gives
\begin{equation}
    \begin{split}
        f\bigg(\frac{1}{k+1}\sum_{i=1}^{k}x^{i+1}\bigg)  -f(x^\star) &\leq \frac{1}{k+1} \sum_{i=1}^{k}\epsilon_2^{i}+\frac{1}{2s(k+1)}\norm{x^\star-x^0}_{2}^2\\
        &\quad+\frac{1}{k+1}\sum_{i=1}^{k}\Big(\norm{\epsilon_1^{i}}_{2}
        +\sqrt{\frac{2\epsilon_2^{i}}{s}}\Big)\norm{x^\star - x^{i-1}}_2\\
        &\quad+\frac{1}{k+1}\sum_{i=1}^{k}\Big(\norm{\epsilon_1^{i}}_{2}
        +\sqrt{\frac{2\epsilon_2^{i}}{s}}\Big)(E^{i} + C_\rho)\\
        &\leq \frac{1}{k+1} \sum_{i=1}^{k}\epsilon_2^{i} +\frac{1}{2s(k+1)}\norm{x^\star-x^0}_{2}^2\\
        &\quad+\frac{1}{k+1}\sum_{i=1}^{k}\Big(\norm{\epsilon_1^{i}}_{2}+\sqrt{\frac{2\epsilon_2^{i}}{s}}\Big)\norm{x^\star - x^{0}}_2\\ &\quad+\frac{1}{k+1}\sum_{i=1}^{k}\Big(\norm{\epsilon_1^{i}}_{2}+\sqrt{\frac{2\epsilon_2^{i}}{s}}\Big)(\sum_{j=1}^i E^{j}+ i C_\rho),
    \end{split}
\end{equation}
where $E^j = \norm{r^{j}}_2+s_{j-1} \norm{\epsilon_1^{j-1}}_2$ and $C_\rho = 0$ if the optimum $x^\star$ is reached. This completes the proof of Corollary \ref{corol:1}. 
%\end{proof}

\subsection{Proof of Theorem~\ref{prop:2}}
\label{Subsec:Proofprop2}
This result is about the basic version of approximate PGD, but with random proximal computation error $\epsilon_{2_\Omega}$, component-wise bounded gradient error $\epsilon_{1_\Omega}$ and bounded residuals $\norm{x_{_\Omega}^k-x^\star}_2$. As the algorithm generates a sequence of random vectors $\{x_{_\Omega}^k\}$, the residual vector sequence $\{r_{_\Omega}^k\}$ will also be a random.

Let $T_k$ denote the second error term in the bound of \eqref{Eq:Theorem1} [Theorem \ref{prop:1}], i.e.,
\begin{equation}
\label{martingale}
T_k =
\left\{
\begin{array}{ll}
0 &,\,\, k=0
\\
\sum_{i=0}^k (\epsilon_{1_\Omega}^{i}-\frac{1}{s}r_{_\Omega}^{i+1})^\top (x^\star - x_{_\Omega}^{i+1})&,\,\,k = 1, 2, \ldots\,,
\end{array}
\right.
\end{equation}
The first step is to show that $\{T_k\}$ is a martingale. Recall that  a sequence of random variables $T_0,T_1,\dots$ is a martingale with respect to the sequence $X_0,X_1,\dots$ if, for all $k\geq0$, the following conditions hold:
\begin{itemize}
    \item $T_k$ is a function of $X_0,X_1,\dots,X_k$;
    \item $E(|T_k|) < \infty$;
    \item $E(T_{k+1}|X_0,X_1,\dots,X_k) = T_k$.
\end{itemize}
A sequence of random variables $T_0,T_1,\dots$ is called a martingale when it is a martingale with respect to itself. That is, $E(|T_k|) < \infty$, and $E(T_{k+1}|T_0,T_1,\dots,T_k) = T_k$.

Let $\nu_{_\Omega}^k = \epsilon_{1_\Omega}^{k-1}-\frac{1}{s}r_{_\Omega}^{k}$ and recall the definition of $r_{_\Omega}^{k}$ in \eqref{Eq:ResidualVec}:
\begin{equation}
    r^k = x^k - \overline{x}^{k}.   
\end{equation}
Rewriting \eqref{martingale} in terms of $\nu_{_\Omega}^k$ yields
\begin{equation}
        \label{Eq:MartingaleNu}
        T_k = T_{k-1} + {\nu_{_\Omega}^k}^\top (x^\star-x_{_\Omega}^k).
\end{equation}
We now show that Assumptions \ref{Assum:RandomVectors} and \ref{Assum:RandomVectors2} imply that $\{T_k\}_{k\geq 0}$ is a martingale. Specifically, \eqref{Eq:AssumptionRandomZeroMean} and \eqref{Eq:ResidAssumptionRandomZeroMean}, we have
\begin{equation}
\mathbb{E}\big[\nu_{_\Omega}^{k}\big\vert \nu_{_\Omega}^{1}\dots \nu_{_\Omega}^{k-1}\big] =  \mathbb{E}[\nu_{_\Omega}^{k}] = 0.\notag 
\end{equation}
And from \eqref{Eq:AssumptionRandomInnerProduct} and \eqref{Eq:ResidAssumptionRandomInnerProduct}, we have
\begin{equation}
\mathbb{E}\big[{\nu_{_\Omega}^{k}}^\top x_{_\Omega}^k\big\vert \nu_{_\Omega}^{1}\dots \nu_{_\Omega}^{k-1},x_{_\Omega}^{1}\dots x_{_\Omega}^{k-1}\big] = \mathbb{E}\big[{\nu_{_\Omega}^{k}}^\top x_{_\Omega}^k\big] = 0.\notag 
\end{equation}
%\begin{proof}
Taking the expected value of both sides of \eqref{Eq:MartingaleNu} conditioned on $\{T_i\}_{i=1}^{k-1}$ gives
\begin{align}
        \mathbb{E}\big[T_k\big\vert T_1\dots T_{k-1}\big] &= \mathbb{E}\big[T_{k-1} + {\nu_{_\Omega}^{k}}^\top (x^\star-x_{_\Omega}^k)\big\vert T_1\dots T_{k-1}\big]\notag
    \\
         &= \mathbb{E}\big[T_{k-1}\big\vert T_1\dots T_{k-1}\big]  + \mathbb{E}\big[{\nu_{_\Omega}^{k}}^\top (x^\star-x_{_\Omega}^k)\big\vert T_1\dots T_{k-1}\big]\notag
    \\
         &= T_{k-1}  + \mathbb{E}\big[{\nu_{_\Omega}^{k}}^\top (x^\star-x_{_\Omega}^k)\big\vert T_1\dots T_{k-1}\big]\notag
    \\
         &= T_{k-1}  + \mathbb{E}\big[{\nu_{_\Omega}^{k}}^\top x^\star\big\vert T_1\dots T_{k-1}\big] -\mathbb{E}\big[{\nu_{_\Omega}^{k}}^\top x_{_\Omega}^k\big\vert T_1\dots T_{k-1}\big]\notag
    \\
         &= T_{k-1}  + \mathbb{E}\big[\nu_{_\Omega}^{k}\big\vert T_1\dots T_{k-1}\big]^\top x^\star -\mathbb{E}\big[{\nu_{_\Omega}^{k}}^\top x_{_\Omega}^k\big\vert T_1\dots T_{k-1}\big]\notag
    \\
         &= T_{k-1}  + \mathbb{E}\big[\nu_{_\Omega}^{k}\big\vert \nu_{_\Omega}^{1}\dots \nu_{_\Omega}^{k-1},x_{_\Omega}^{1}\dots x_{_\Omega}^{k-1}\big]^\top x^\star\label{53}  
    \\     
         &\quad-\mathbb{E}\big[{\nu_{_\Omega}^{k}}^\top x_{_\Omega}^k\big\vert \nu_{_\Omega}^{1}\dots \nu_{_\Omega}^{k-1},x_{_\Omega}^{1}\dots x_{_\Omega}^{k-1}\big]\notag
    \\
         &= T_{k-1}  + \mathbb{E}\bigg[\epsilon_{1_\Omega}^{k-1}-\frac{1}{s}r_{_\Omega}^{k}\bigg]^\top x^\star -\mathbb{E}\bigg[(\epsilon_{1_\Omega}^{k-1}-\frac{1}{s}r_{_\Omega}^{k})^\top x_{_\Omega}^k\bigg] \label{54}
    \\
         &= T_{k-1}  + \mathbb{E}\bigg[\epsilon_{1_\Omega}^{k-1}-\frac{1}{s}r_{_\Omega}^{k}\bigg]^\top x^\star -\mathbb{E}\bigg[\mathbb{E}\bigg[\epsilon_{1_\Omega}^{k-1}-\frac{1}{s}r_{_\Omega}^{k}\big\vert x_{_\Omega}^k\bigg]^\top x_{_\Omega}^k\bigg] \label{55}
    \\
         &= T_{k-1}  - \mathbb{E}\bigg[\epsilon_{1_\Omega}^{k-1}-\frac{1}{s}r_{_\Omega}^{k}\bigg]^\top x_{_\Omega}^k \label{56}
    \\
         &= T_{k-1} \label{57}.
\end{align}
From \eqref{53} to \eqref{54}, we used the error mean independence assumption, i.e,\\ $E\big[\nu_{_\Omega}^{k}\big\vert \nu_{_\Omega}^{1}\dots \nu_{_\Omega}^{k-1}\big] =E\big[\nu_{_\Omega}^{k}\big]$ as well as the data mean independence assumption, i.e, $E\big[{\nu_{_\Omega}^{k}}^\top x_{_\Omega}^k\big\vert \nu_{_\Omega}^{1}\dots \nu_{_\Omega}^{k-1},x_{_\Omega}^{1}\dots x_{_\Omega}^{k-1}\big]$ $=$ $E\big[{\nu^{k}}^\top x_{_\Omega}^k\big] $.  From \eqref{56} to \eqref{57}, we used the zero mean error assumption, i.e, $E\big[\nu_{_\Omega}^{k}\big] = 0$. Therefore, $T_1,T_2,\dots,T_{k}$ is a martingale. %(Definition \ref{def:1}).
%here review

In what follows, we establish upper bounds on the absolute value of the martingale $\{T_k\}$. To do that, we use the Azuma-Hoeffding inequality in Lemma \ref{lem:3}, noticing that $\big\vert T_k - T_{k-1}\big\vert = \big\vert{\nu_{_\Omega}^k}^\top (x^\star-x_{_\Omega}^k)\big\vert \leq \Big(\sqrt{n}\delta M_{\nabla g}+\sqrt{2\epsilon_2^k/s}\Big)\norm{x_{_\Omega}^\star-x_{_\Omega}^k}_{2}$, where we have used Cauchy-Schwarz, etc. Lemma \ref{lem:3} then yields
% for absolute gradient error $|\epsilon_{1j}^{i}| \leq \delta$~\ref{ErrModel1}
\begin{equation}     
    \text{Pr}\bigg(|T_k-T_0|\geq \gamma\sqrt{\sum_{i=1}^k\big(\sqrt{n}M_{\nabla g}|\delta|+\sqrt{\frac{2\epsilon_2^i}{s}}\Big)^2\norm{x_{_\Omega}^\star-x_{_\Omega}^i}_{2}^2}\bigg)\leq 2\exp(-\frac{\gamma^2}{2}).
\end{equation}

Since $\epsilon_2^k \leq \varepsilon_0$, then the following also holds 
\begin{equation}
    \text{Pr}\bigg(|T_k-T_0|\geq \gamma\big(\sqrt{n}M_{\nabla g}|\delta|+\sqrt{\frac{2\varepsilon_0}{s}}\big)\sqrt{\sum_{i=1}^k\norm{x_{_\Omega}^\star-x_{_\Omega}^i}_{2}^2}\bigg)\leq 2\exp(-\frac{\gamma^2}{2}).
\end{equation}
And since $T_0 = 0$ we obtain
\begin{equation}
    \text{Pr}\bigg(|T_k|\geq \gamma\big(\sqrt{n}M_{\nabla g}|\delta|+\sqrt{\frac{2\varepsilon_0}{s}}\big)\sqrt{\sum_{i=1}^k\norm{x_{_\Omega}^\star-x_{_\Omega}^i}_{2}^2}\bigg)\leq 2\exp(-\frac{\gamma^2}{2}).
\end{equation}
Or, equivalently, that
\begin{equation}
    |T_k|\leq
    \gamma\big(\sqrt{n}M_{\nabla g}|\delta|+\sqrt{\frac{2\varepsilon_0}{s}}\big)\sqrt{\sum_{i=1}^k\norm{x_{_\Omega}^\star-x_{_\Omega}^i}_{2}^2}
\end{equation}
holds for all $k \geq 1$ with probability at least $1-2\exp(-\frac{\gamma^2}{2})$. Expanding $T_k$ we obtain
\begin{equation}
    \bigg|\sum_{i=0}^{k}(\epsilon_{1_\Omega}^{i-1}-\frac{1}{s}r_{_\Omega}^{i})^\top(x_{_\Omega}^\star-x_{_\Omega}^i)\bigg|\leq
    \gamma\Bigg(\sqrt{n}M_{\nabla g}|\delta|+\sqrt{\frac{2\varepsilon_0}{s}}\Bigg)\sqrt{\sum_{i=1}^k\norm{x_{_\Omega}^\star-x_{_\Omega}^i}_{2}^2}.
    \label{Ineq:BasicProbUpperBound0}
\end{equation}
% \big\|x_{_\Omega}^\star-x_{_\Omega}^i\big\|_2 \leq \norm{x_{_\Omega}^\star-x_{_\Omega}^0}_{2}$ (Theorem \ref{Thm:QuasiFejerPG})
By assumption, we have that $\norm{x_{_\Omega}^\star-x_{_\Omega}^i}_{2}^2 \leq \norm{x_{_\Omega}^\star-x_{_\Omega}^0}_{2}^2$ holds with probability $p$, for each $i$. Then,
\begin{equation}     \bigg|\sum_{i=0}^{k}(\epsilon_{1_\Omega}^{i-1}-\frac{1}{s}r_{_\Omega}^{i})^\top(x_{_\Omega}^\star-x_{_\Omega}^i)\bigg|\leq     \gamma\Bigg(M_{\nabla g}\sqrt{nk}|\delta|+\sqrt{\frac{2k\varepsilon_0}{s}}\Bigg)\norm{x_{_\Omega}^\star-x_{_\Omega}^0}_{2}     \label{Ineq:BasicProbUpperBound1} \end{equation}
holds with probablity $p^k\big(1-2\exp(-\frac{\gamma^2}{2})\big)$. 
% For large scale problems,\footnote{And for same levels of error magnitudes $\delta$ and $\varepsilon_0$.} we typically have $n \gg \frac{1}{s} \geq L$; therefore, \eqref{Ineq:BasicProbUpperBound1} can be approximated as follows,
% \begin{equation}
%     \bigg|\sum_{i=0}^{k}(\epsilon_1^{i}-\frac{1}{s}r^{i})^\top(x_{_\Omega}^\star-x_{_\Omega}^i)\bigg|\leq
%     \gamma\sqrt{nk}|\delta|\norm{x_{_\Omega}^\star-x_{_\Omega}^0}_{2}.
%     \label{Ineq:BasicProbUpperBound1Simple}
% \end{equation}
Substituting \eqref{Ineq:BasicProbUpperBound1} into \eqref{Eq:Theorem1} completes the proof of Theorem \ref{prop:2}.
%\end{proof}
\subsection{Proof of Theorem~\ref{prop:3}}
\label{Subsec:ProofEq:Theorem3}
Here $\epsilon_{2_\Omega}$ is bounded almost surely and has stationary mean. Specifically, we have $0 \leq \epsilon_{2_\Omega}^k \leq \varepsilon_0$, with probability $1$.  %\begin{proof}
By Hoeffding’s inequality (Lemma \ref{lem:4}), we can write,
\begin{equation}
    \text{Pr}\bigg(|\sum_{i=1}^{k}\epsilon_{2_\Omega}^i - E(\sum_{i=1}^{k}\epsilon_{2_\Omega}^i)|\geq t\bigg)\leq 2\exp\bigg(\frac{-2t^2}{k\varepsilon_0^2}\bigg),\quad \text{for all}\quad t > 0.
    \label{75}
\end{equation}
Defining the constant mean  $E(\epsilon_{2_\Omega}^k) = E(\epsilon_{2_\Omega})$ and substituting in \eqref{75} yields 
\begin{equation}
    \text{Pr}\bigg(|\sum_{i=1}^{k}\epsilon_{2_\Omega}^i - kE(\epsilon_{2_\Omega})|\geq t\bigg)\leq 2\exp\bigg(\frac{-2t^2}{k\varepsilon_0^2}\bigg),
    \quad
    \text{for all}\quad t > 0.  
\end{equation}
By choosing $t = \frac{\gamma\sqrt k \varepsilon_0}{2}$, for some $\gamma > 0$, we obtain
\begin{equation}
    \text{Pr}\bigg(|\sum_{i=1}^{k}\epsilon_{2_\Omega}^i - kE(\epsilon_{2_\Omega})|\geq \frac{\gamma\sqrt k \varepsilon_0}{2}\bigg)\leq 2\exp\bigg(\frac{-\gamma^2}{2}\bigg)
    \quad
    \text{for all}\quad \gamma > 0.
\end{equation}
Equivalently,
\begin{equation}
    \sum_{i=1}^{k}\epsilon_{2_\Omega}^i \leq kE(\epsilon_{2_\Omega}) + \frac{\gamma\sqrt k \varepsilon_0}{2}
    \label{e2leq}
\end{equation}
holds with probability at least $1 - 2\exp(-\frac{\gamma^2}{2})$. Using the last inequality \eqref{e2leq} in \eqref{Eq:Theorem2} and applying Lemma \ref{lem:sumprob} completes the proof of Theorem \ref{prop:3}. 
%\end{proof}
%here accelerated
\subsection{Proof of Theorem~\ref{Theorem1}}
\label{Subsec:ProofEq:Theorem1}
%\begin{proof}
Define now $G$ as
\begin{equation}
    G(x,y^{k}) =  g(y^{k}) + \nabla^{\epsilon_1^{k}} g(y^{k})^\top (x-y^{k})+\frac{1}{2s}\norm{x-y^{k}}_{2}^2+h(x),
    \label{G_acc}
\end{equation}
where
\begin{equation}
    y_k = (1+\beta_k)x^{k}-\beta_k x^{k-1},
\end{equation}
with momentum $\beta_k \in [0,  1]$. If we evaluate \eqref{G_acc} at iteration $k+1$ we obtain
\begin{equation}
    G(x^{k+1},y^{k}) =  g(y^{k}) + \nabla^{\epsilon_1^{k}} g(y^{k})^\top (x^{k+1}-y^{k})+\frac{1}{2s}\norm{x^{k+1}-y^{k}}_{2}^2+h(x^{k+1}),
\end{equation}
where 
\begin{equation}
    x^{k+1} \in \text{prox}_{s_k h}^{\epsilon_2^{k}}(y^{k} - s\nabla^{\epsilon_1^{k}} g(y^{k}))
\end{equation}
is the perturbed iterate. By completing the square, we can show that the latter is the $\epsilon_2^{k}$-suboptimal solution of the following optimization sub-problem,
\begin{equation}
    \underset{x \in \mathbb{R}^n} {\min} \quad  G(x,y^{k}).
    \label{Eq:MinGxy}
\end{equation}
%\begin{proof}
Namely, expanding $G$ we have the following equivalent problems
\begin{align}
    &\underset{x \in \mathbb{R}^n} {\min} \quad g(y^{k}) + \nabla^{\epsilon_1^{k}} g(y^{k})^\top (x-y^{k})+\frac{1}{2s}\norm{x-y^{k}}_{2}^2+h(x)
    \label{Eq:MinGxyExpand1}
    \\
    &\underset{x \in \mathbb{R}^n} {\min} \quad g(y^{k}) + \nabla^{\epsilon_1^{k}} g(y^{k})^\top (x-y^{k})+\frac{1}{2s}\norm{x-y^{k}}_{2}^2+\frac{s}{2}\norm{\nabla^{\epsilon_1^{k}} g(y^{k})}_{2}^2+h(x)
    \label{Eq:MinGxyExpand2}
    \\
    &\underset{x \in \mathbb{R}^n} {\min} \quad \nabla^{\epsilon_1^{k}} g(y^{k})^\top (x-y^{k})+\frac{1}{2s}\norm{x-y^{k}}_{2}^2+\frac{s}{2}\norm{\nabla^{\epsilon_1^{k}} g(y^{k})}_{2}^2+h(x)
    \label{Eq:MinGxyExpand3}
    \\
    &\underset{x \in \mathbb{R}^n} {\min} \quad \nabla^{\epsilon_1^{k}} \frac{2s}{2s}g(y^{k})^\top (x-y^{k})+\frac{1}{2s}\norm{x-y^{k}}_{2}^2+\frac{s^2}{2s}\norm{\nabla^{\epsilon_1^{k}} g(y^{k})}_{2}^2+h(x)
    \notag
    \\
    &\underset{x \in \mathbb{R}^n} {\min} \quad \frac{\norm{x-y^{k}}_{2}^2+2s\nabla^{\epsilon_1^{k}} g(y^{k})^\top (x-y^{k})+s^2\norm{\nabla^{\epsilon_1^{k}} g(y^{k})}_{2}^2}{2s}+h(x),
    \notag
    \\
    &\underset{x \in \mathbb{R}^n} {\min} \quad \frac{\norm{x-y^{k}-s\nabla^{\epsilon_1^{k}} g(y^{k})}_{2}^2}{2s}+h(x),
    \label{Eq:MinGxyExpand6}
\end{align}
where from \eqref{Eq:MinGxyExpand1} to \eqref{Eq:MinGxyExpand2} we have added ${s}\norm{\nabla^{\epsilon_1^{k}} g(y^{k})}_{2}^2/{2}$ and from \eqref{Eq:MinGxyExpand2} to \eqref{Eq:MinGxyExpand3} we have dropped $g(y^k)$ from the objective function, since both terms do not depend on the variable $x$. The $\epsilon_2$-suboptimal solution of \eqref{Eq:MinGxyExpand6} is, by \eqref{Eq:ProxApproximate}, the approximate proximal operator evaluated at $y^{k} - s\nabla^{\epsilon_1^{k}} g(y^{k})$, i.e., $\text{prox}_{s_k h}^{\epsilon_2^{k}}(y^{k} - s\nabla^{\epsilon_1^{k}} g(y^{k}))$.
%\end{proof}
Note that $G$ is now centered around $y^{k}$ instead of $x^{k}$ as in the basic case. Similarly to \eqref{27}-\eqref{Eq:ProofTheorem1-BeforeConvex}, we can show that
\begin{equation}
    G(x^{k+1},y^k)-G(x,y^k) \leq \epsilon_2^{k} -\frac{1}{2s}\norm{x-\overline{x}^{k+1}}_{2}^2, \quad \text{for any}\quad x \in \mathbb{R}^n.
    %\label{G-G}
\end{equation}
where $\overline{x}^{k+1}$ is the noise-free (in terms of $\epsilon_2$) solution, i.e., solution of~\eqref{Eq:MinGxy}, and
$x^{k+1}$ is the actual, noisy iterate, i.e., the $\epsilon_2^{k}$-suboptimal solution of~\eqref{Eq:MinGxy}, respectively. Expanding $G(x^{k+1},y^k)$ gives
\begin{equation}
    \label{expandG}
    \begin{split}
    g(y^{k}) + \nabla^{\epsilon_1^{k}} g(y^{k})^\top (x^{k+1}-y^{k})+\frac{1}{2s}\norm{x^{k+1}-y^{k}}_{2}^2+h(x^{k+1})  - G(x,y^{k}) \\ \leq \epsilon_2^{k} -\frac{1}{2s}\norm{x-\overline{x}^{k+1}}_{2}^2.
    \end{split}
\end{equation}
By the \textit{Lipschitz} continuity of the gradient of $g$ we have
\begin{equation}
    g(y^{k}) + \nabla g(y^{k})^\top (x^{k+1}-y^{k})+\frac{1}{2s}\norm{x^{k+1}-y^{k}}_{2}^2 \geq g(x^{k+1}).
\end{equation}
% Expanding $\nabla^{\epsilon_1^{k}} g(y^{k})$, this is equivalent to
% \begin{equation}
%     g(y^{k}) + \nabla g(y^{k})^\top (x^{k+1}-y^{k})+{\epsilon_1^{k}}^\top (x^{k+1}-y^{k})+\frac{1}{2s}\norm{x^{k+1}-y^{k}}_{2}^2 \geq g(x^{k+1}).
% \end{equation}
for $s \leq \frac{1}{L}$. Adding $h(x^{k+1})$ to both sides of the inequality yields
\begin{equation}
    \begin{split}
        &g(y^{k}) + \nabla g(y^{k})^\top (x^{k+1}-y^{k})+\frac{1}{2s}\norm{x^{k+1}-y^{k}}_{2}^2+h(x^{k+1})\\ &\geq
        g(x^{k+1})+h(x^{k+1}) = f(x^{k+1}).
    \end{split}
\label{G_geq_f}
\end{equation}
Substituting for $\nabla^{\epsilon_1^{k}} g(y^{k})$ in \eqref{expandG} by $\nabla g(y^{k})+\epsilon_1^{k}$, and using~\eqref{G_geq_f} we obtain
\begin{equation}
    f(x^{k+1})-G(x,y^k) \leq \epsilon_2^{k} -\frac{1}{2s}\norm{x-\overline{x}^{k+1}}_{2}^2+{\epsilon_1^{k}}^\top (y^{k}-x^{k+1})
    \label{f-G}.
\end{equation}
Substituting for $G(x,y^k)$ yields
\begin{equation}
\label{f-G expand}
\begin{split}
    f(x^{k+1})-g(y^{k}) - \nabla^{\epsilon_1^{k}} g(y^{k})^\top (x-y^{k})-\frac{1}{2s}\norm{x-y^{k}}_{2}^2-h(x) \leq \epsilon_2^{k} -\frac{1}{2s}\norm{x-\overline{x}^{k+1}}_{2}^2
    \\
    +{\epsilon_1^{k}}^\top (y^{k}-x^{k+1}).
\end{split}
\end{equation}
Subtracting $g(x)$ from both sides of \eqref{f-G expand},
\begin{equation}
    \begin{split}
        f(x^{k+1})-g(y^{k}) - \nabla^{\epsilon_1^{k}} g(y^{k})^\top (x-y^{k})&-\frac{1}{2s}\norm{x-y^{k}}_{2}^2-h(x)-g(x)\leq \epsilon_2^{k}\\
        &\quad-\frac{1}{2s}\norm{x-\overline{x}^{k+1}}_{2}^2-g(x)    +{\epsilon_1^{k}}^\top (y^{k}-x^{k+1})
        \label{f-G-g}.
    \end{split}
\end{equation}
Substituting $h(x) + g(x)$ by $f(x)$ in \eqref{f-G-g} results in 
\begin{equation}
    \begin{split}
        f(x^{k+1})-f(x) &\leq \epsilon_2^{k} + {\epsilon_1^{k}}^\top (x-y^{k}) -\frac{1}{2s}\norm{x-\overline{x}^{k+1}}_{2}^2-g(x) +g (y^{k}) 
        \\
        &\quad+ \nabla g(y^{k})^\top (x-y^{k})+\frac{1}{2s}\norm{x-y^{k}}_{2}^2+{\epsilon_1^{k}}^\top (y^{k}-x^{k+1})
        \label{f-G-g 2}.
    \end{split}
\end{equation}
By the convexity of $g$, \eqref{f-G-g 2}  becomes
\begin{equation}
    f(x^{k+1})-f(x) \leq \epsilon_2^{k} + {\epsilon_1^{k}}^\top (x-x^{k+1}) -\frac{1}{2s}\norm{x-\overline{x}^{k+1}}_{2}^2+\frac{1}{2s}\norm{x-y^{k}}_{2}^2.
    \label{before-xBar}
\end{equation}
Recalling the definition of $r^{k+1} = x^{k+1} - \overline{x}^{k+1}$, we have
\begin{align}
    \norm{x-\overline{x}^{k+1}}_{2}^2 = \big\|x-x^{k+1}\big\|_2^2+
    \big\|r^{k+1}\big\|_2^2+
    2(r^{k+1})^\top(x-x^{k+1}).
    \label{xBar}
\end{align}
Using \eqref{xBar} in \eqref{before-xBar} yields 
\begin{equation}
    \begin{split}
        f(x^{k+1})-f(x) &\leq \epsilon_2^{k} + {\epsilon_1^{k}}^\top (x-x^{k+1}) -\frac{1}{2s}\big\|x-x^{k+1}\big\|_2^2-\frac{1}{2s}\big\|r^{k+1}\big\|_2^2
        \\
        &\quad-\frac{1}{2s}(r^{k+1})^\top(x-x^{k+1})+\frac{1}{2s}\norm{x-y^{k}}_{2}^2
        \\
        &\leq \epsilon_2^{k} + {\epsilon_1^{k}}^\top (x-x^{k+1}) -\frac{1}{2s}\big\|x-x^{k+1}\big\|_2^2
        \\
        &\quad-\frac{1}{2s}(r^{k+1})^\top(x-x^{k+1})+\frac{1}{2s}\norm{x-y^{k}}_{2}^2.
        \label{Eq:f-fAcceleratedMain}
    \end{split}
\end{equation}
Let us now substitute $y^k$ and $x$ by, 
\begin{align}
%    \theta_k &= \frac{2}{k+1}\\
%    \frac{\theta_{k-1} - 1}{\theta_k} &= \frac{\frac{2}{k}-1}{\frac{2}{k+1}}
%    =\frac{\frac{2-k}{k}}{\frac{2}{k+1}}
%    = \frac{}{}
%    \\
    y^k &= x^k +\beta_k(x^k-x^{k-1}) \label{Eq:yk-accelerated}\\
    x &= \alpha_k^{-1} x^\star + (1 - \alpha_k^{-1}) x^k \label{Eq:x-accelerated},
\end{align}
where \eqref{Eq:yk-accelerated} follows from the definition of the acceleration scheme \eqref{Eq:AcceleratedPG}, and~\eqref{Eq:x-accelerated} is a choice that we make to simplify the analysis.\footnote{Note that $y^k \rightarrow x^k$ as $x^k \rightarrow x^\star$.} $\{\alpha_k\}_{k\geq1}$ is a given parameter sequence that satisfies $\alpha_0 = 1$,  $\alpha_k \geq 1$ and $\beta_k = \frac{\alpha_{k-1}-1}{\alpha_k}$.  \eqref{Eq:f-fAcceleratedMain} can now be expanded as
\begin{equation}
    \begin{split}
        f(x^{k+1})-f(\alpha_k^{-1} x^\star + (1 - \alpha_k^{-1}) x^k) &\leq \epsilon_2^{k} + {\epsilon_1^{k}}^\top (\alpha_k^{-1} x^\star + (1 - \alpha_k^{-1}) x^k-x^{k+1})         
        \\                                                                                                                                                          &\quad-\frac{1}{2s}\norm{\alpha_k^{-1} x^\star + (1 - \alpha_k^{-1}) x^k-x^{k+1}}_{2}^2
        \\
        &\quad+\frac{1}{2s}\norm{\alpha_k^{-1} x^\star + (1 - \alpha_k^{-1}) x^k-y^{k}}_{2}^2
        \\
        &\quad-\frac{1}{2s}(r^{k+1})^\top(\alpha_k^{-1} x^\star + (1 - \alpha_k^{-1}) x^k-x^{k+1})
        % \\
        % &\quad-\frac{1}{2s}\big\|r^{k+1}\big\|_2^2. 
        \label{v-bvleq}.
    \end{split}
\end{equation}
Since $\alpha_k^{-1} \in ]0,1]$, and from the convexity of $f$, we have
\begin{equation}
    \begin{split}
        f(x^{k+1})-f(\alpha_k^{-1} x^\star + (1 - \alpha_k^{-1}) x^k) &\geq f(x^{k+1}) + (1-\alpha_k^{-1}) f(x^\star)
        \\
        &\quad- (1 - \alpha_k^{-1}) f(x^k) - f(x^\star)
        \\
        &= f(x^{k+1})- f(x^\star) - (1-\alpha_k^{-1}) ( f(x^k)-f(x^\star)). \label{v-bv}
    \end{split}
\end{equation}
Let us now define the new sequences $\{v^k\}$ and $\{u^k\}$ by
\begin{align}
    \label{u}
    & u^{k} := x^\star + (\alpha_k - 1) x^k-\alpha_k y^{k} = x^\star- (x^k+(\alpha_{k-1}-1)( x^{k} -x^{k-1}))
    \\
    \label{v}
    & v^{k} = f(x^k)-f(x^\star). 
\end{align}
From these we can obtain 
\begin{align}
    % & v^{k+1} - \beta_k v^{k} = f(x^{k+1})- f(x^\star) - (1-\alpha_k^{-1}) ( f(x^k)-f(x^\star)),
    % \label{v^k+1-Bv^k}
    % \\
    u^{k+1} := x^\star + (\alpha_k - 1) x^k-\alpha_k x^{k+1} =     x^\star-(x^{k+1} + (\alpha_k - 1) (x^{k+1}-x^k)),
    \label{u+}
\end{align}
by using $\beta_k = (\alpha_{k-1}-1)/\alpha_k$ and $y^k = (1+\beta_k)x^k-\beta_k x^{k-1}$.

Rewriting \eqref{v-bvleq} in terms of the newly defined sequences, $\{u^k\}$ and $\{v^k\}$, and using \eqref{v-bv} with $c_k := 1 - \alpha^{-1}$, as well as \eqref{u} and \eqref{u+} we obtain
\begin{equation}
    \begin{split}
        v^{k+1} - c_k v^{k} &\leq \epsilon_2^{k} + \frac{1}{\alpha_k}{\epsilon_1^{k}}^\top u^{k+1} -\frac{1}{2s\alpha_k^2}\norm{u^{k+1}}_{2}^2+\frac{1}{2s\alpha_k^2}\norm{u^{k}}_{2}^2 
        \\
        &\quad-\frac{1}{2s}\big\|r^{k+1}\big\|_2^2-\frac{1}{2s\alpha_k}(r^{k+1})^\top u^{k+1}. \label{v-bv(u)}
    \end{split}
\end{equation}
% where we have used \eqref{v-bv} with $c_k = 1-\alpha_k^{-1}$ in \eqref{v-bvleq}.
%\begin{align}
%    x^\star + (\alpha - 1) x^k-\alpha y^{k} &= x^\star + (\alpha - 1) x^k-\alpha x^{k}-\alpha\beta x^{k} +\alpha\beta x^{k-1}\\
%    &=x^\star- (x^k+\alpha\beta( x^{k} -x^{k-1}))\\
%    \norm{x^\star + (\alpha - 1) x^k-\alpha x^{k+1}} &=     \norm{x^\star-(x^{k+1} + (\alpha - 1) (x^{k+1}-x^k))}\\
%    \norm{x^\star + (\alpha - 1) x^k-\alpha y^{k}} &= \norm{x^\star- (x^k+\alpha\beta( x^{k} -x^{k-1}))}\\
%    \beta &= \frac{\alpha-1}{\alpha} = 1-\alpha^{-1}\\
%    \norm{x^\star + (\alpha - 1) x^k-\alpha x^{k+1}} &=     \norm{x^\star-(x^{k+1} + (\alpha - 1) (x^{k+1}-x^k))} = \norm{u^{k+1}} \label{118}\\
%    \norm{x^\star + (\alpha - 1) x^k-\alpha y^{k}} &= \norm{x^\star- (x^k+(\alpha-1)( x^{k} -x^{k-1}))} = \norm{u^{k}}
%\end{align}
%Choosing $\beta = \frac{1-k}{k+2}$ this results in $\alpha = \frac{k+2}{2k+1}$, or equivalently, $1/2 < \alpha \leq 2$ and $-1 < \beta \leq 1/2$
Rearranging \eqref{v-bv(u)} we obtain
\begin{equation}
    \begin{split}
        v^{k+1}+\frac{1}{2s}\big\|r^{k+1}\big\|_2^2 +\frac{1}{2s\alpha_k^2}\norm{u^{k+1}}_{2}^2 &\leq \epsilon_2^{k} + \frac{1}{\alpha_k}{\epsilon_1^{k}}^\top u^{k+1} + c_k v^{k}
        \\
        &\quad+\frac{1}{2s\alpha_k^2}\norm{u^{k}}_{2}^2-\frac{1}{2s\alpha_k}(r^{k+1})^\top u^{k+1}.
    \end{split}
\end{equation}
Multiplying both sides by $\alpha_k^2$,
\begin{equation}
    \begin{split}
        \alpha_k^2v^{k+1}
        +\frac{\alpha_k^2}{2s}\big\|r^{k+1}\big\|_2^2
        +\frac{1}{2s}\norm{u^{k+1}}_{2}^2 &\leq \alpha_k^2\epsilon_2^{k} + \alpha_k {\epsilon_1^{k}}^\top u^{k+1} + \alpha_k^2c_k v^{k}
        \\
        &\quad+\frac{1}{2s}\norm{u^{k}}_{2}^2-\frac{\alpha_k}{2s}(r^{k+1})^\top u^{k+1}.
        \label{eq:124}        
    \end{split}
\end{equation}
Applying \eqref{eq:124} recursively, and substituting
$\alpha_k^2c_k = \alpha_k^2-\alpha_k = \alpha_{k-1}$ yields

\begin{align}
   \alpha_k^2v^{k+1}
   +\frac{\alpha_k^2}{2s}\big\|r^{k+1}\big\|_2^2
   +\frac{1}{2s}\norm{u^{k+1}}_{2}^2 &\leq \alpha_k^2\epsilon_2^{k} + \alpha_k {\epsilon_1^{k}}^\top u^{k+1} + \alpha_{k-1} v^{k}  
   \\
   &\quad+\frac{1}{2s}\norm{u^{k}}_{2}^2-\frac{\alpha_k}{2s}(r^{k+1})^\top u^{k+1},\notag
   \\
   & \dots, \notag
   \\
   \alpha_1^2v^{2}
   +\frac{\alpha_1^2}{2s}\big\|r^{2}\big\|_2^2   
   +\frac{1}{2s}\norm{u^{2}}_{2}^2 &\leq \alpha_1^2\epsilon_2^{2} + \alpha_1 {\epsilon_1^{2}}^\top u^{2} + \alpha_{0} v^{1}  
   \\
   &\quad+\frac{1}{2s}\norm{u^{1}}_{2}^2-\frac{\alpha_1}{2s}(r^{2})^\top u^{2}.\notag
\end{align}
Adding both sides of all inequalities,
\begin{equation}
    \begin{split}
        &\alpha_k^2v^{k+1}
        +\sum_{i=1}^{k}\frac{\alpha_i^2}{2s}\big\|r^{i+1}\big\|_2^2
        +\frac{1}{2s}\norm{u^{k+1}}_{2}^2 + \sum_{i=1}^k (\alpha_{i-1}^2-\alpha_{i-1}) v^{i} 
        \\
        &\leq \sum_{i=1}^k \alpha_i^2\epsilon_2^{i}+\frac{1}{2s}\norm{u^{1}}_{2}^2
        + \sum_{i=1}^k \alpha_i {\epsilon_1^{i}}^\top u^{i+1}  + \alpha_{0} v^{1}
        -\sum_{i=1}^{k}\frac{\alpha_i}{2s}(r^{i+1})^\top u^{i+1}.
    \end{split}
\end{equation}
Substituting $\alpha_{i-1}^2-\alpha_{i-1} = \alpha_{i-2}^2$ and $\alpha_0 = 1$ gives,
\begin{equation}
    \begin{split}
        &\alpha_k^2v^{k+1}
        +\sum_{i=1}^{k}\frac{\alpha_i^2}{2s}\big\|r^{i+1}\big\|_2^2
        +\frac{1}{2s}\norm{u^{k+1}}_{2}^2 + \sum_{i=1}^k \alpha_{i-2} v^{i}
        \\
        &\leq \sum_{i=1}^k \alpha_i^2\epsilon_2^{i} + \sum_{i=1}^k \alpha_i {\epsilon_1^{i}}^\top u^{i+1}  + v^{1}\notag  
        +\frac{1}{2s}\norm{u^{1}}_{2}^2-\sum_{i=1}^{k}\frac{\alpha_i}{2s}(r^{i+1})^\top u^{i+1}.
    \end{split}
\end{equation}
% \begin{equation}
%     \begin{split}
%         &\alpha_k^2v^{k+1}
%         +\sum_{i=1}^{k}\frac{\alpha_i^2}{2s}\big\|r^{i+1}\big\|_2^2
%         +\frac{1}{2s}\norm{u^{k+1}}_{2}^2 + \sum_{i=1}^k \alpha_{i-2} v^{i}
%         \\
%         &\leq \sum_{i=1}^k \alpha_i^2\epsilon_2^{i} + \sum_{i=1}^k \alpha_i \bigg({\epsilon_1^{i}} -\frac{1}{2s} r^{i+1}\bigg)^\top u^{i+1}  + v^{1}
%         +\frac{1}{2s}\norm{u^{1}}_{2}^2.
%     \end{split}
% \end{equation}
For a positive sequence $\{\alpha_k\}_{k\geq 0}$ and because $x^\star$ is a (global) minimizer, $\sum \alpha_{i-2} v^{i}\geq 0$ is always satisfied; hence the following holds
\begin{equation}
    \begin{split}
        \label{130}
        \alpha_k^2v^{k+1} &\leq \alpha_k^2v^{k+1}
        +\sum_{i=1}^{k}\frac{\alpha_i^2}{2s}\big\|r^{i+1}\big\|_2^2
        +\frac{1}{2s}\norm{u^{k+1}}_{2}^2 + \sum_{i=1}^k \alpha_{i-2} v^{i} 
        \\
        &\leq \sum_{i=1}^k \alpha_i^2\epsilon_2^{i} + \sum_{i=1}^k \alpha_i \bigg({\epsilon_1^{i}} -\frac{1}{s} r^{i+1}\bigg)^\top u^{i+1}  + v^{1}  +\frac{1}{2s}\norm{u^{1}}_{2}^2.        
    \end{split}
\end{equation}
% \begin{equation}
%     \begin{split}
%         \label{130}
%         \alpha_k^2v^{k+1} &\leq \alpha_k^2v^{k+1}
%         +\sum_{i=1}^{k}\frac{\alpha_i^2L}{2}\big\|r^{i+1}\big\|_2^2
%         +\frac{1}{2s}\norm{u^{k+1}}_{2}^2 + \sum_{i=1}^k \alpha_{i-2} v^{i} 
%         \\
%         &\leq \sum_{i=1}^k \alpha_i^2\epsilon_2^{i} + \sum_{i=1}^k \alpha_i {\epsilon_1^{i}}^\top u^{i}  + v^{1}  +\frac{1}{2s}\norm{u^{1}}_{2}^2
%         -\sum_{i=1}^{k}\frac{\alpha_i L}{2}(r^{i+1})^\top u^{i+1}.        
%     \end{split}
% \end{equation}
From \eqref{Eq:f-fAcceleratedMain} with $k=0$ and $x=x^\star$, we have
\begin{equation}
    \begin{split}
   \label{131}
        v^{1} = f(x^{1})  -f(x^\star) 
        &\leq \epsilon_2^{0} + \bigg({\epsilon_1^{0}}-\frac{1}{2s}{r^{1}}\bigg)^\top (x^\star-x^{1}) -\frac{1}{2s}\big\|x^\star-x^{1}\big\|_2^2
        \\
        &\quad+\frac{1}{2s}\norm{x^\star-x^{0}}_{2}^2, 
    \end{split}
\end{equation}
since $y^0 = x^0$. From the definition of $\{u^k\}$ in \eqref{u+} we have
\begin{equation}
    \begin{split}
        \frac{1}{2s}\norm{u^{1}}_{2}^2 &= \frac{1}{2s}\norm{x^\star + (\alpha_0 - 1) x^0-\alpha_0 x^{1}}_{2}^2,\\
         &= \frac{1}{2s}\norm{x^\star - x^{1}}_{2}^2\label{Eq:BeforeQuasiAcc},
        %  &\leq \frac{1}{2s}\norm{x^{0}- x^\star}_2 + \frac{1}{2s}E^{1},\label{Eq:AfterQuasiAcc}\\
        %  &=\frac{1}{2s}\norm{x^{0}- x^\star}_2 + \frac{1}{2s}\norm{r^{1}}_2+\frac{s_kL}{2} \norm{\epsilon_1^{0}}_2. 
    \end{split}
\end{equation}
where we have used the initialization $\alpha_0 = 1$. 
%and from \eqref{Eq:BeforeQuasiAcc} to \eqref{Eq:AfterQuasiAcc} we used Theorem \ref{Thm:QuasiFejerAcceleratedPG}
Substituting for $v^{k+1}$ and combining \eqref{131} and \eqref{Eq:BeforeQuasiAcc} with \eqref{130} yields
\begin{equation}
    \begin{split}
       \alpha_k^2(f(x^{k+1})-f(x^\star)) &\leq
       \sum_{i=1}^k \alpha_i^2\epsilon_2^{i} + \sum_{i=1}^k \alpha_i \bigg({\epsilon_1^{i}} -\frac{1}{s} r^{i+1}\bigg)^\top u^{i+1}  +\frac{1}{2s}\norm{u^{1}}_{2}^2
       \\
       &+ \epsilon_2^{0} + {\epsilon_1^{0}}^\top (x^\star-x^{1}) -\frac{1}{2s}\big\|x^\star-x^{1}\big\|_2^2-\frac{1}{2s}{r^{1}}^\top(x^\star-x^{1})
       \\
       &\quad+\frac{1}{2s}\norm{x^\star-x^{0}}_{2}^2,
       \\
       &=\sum_{i=1}^k \alpha_i^2\epsilon_2^{i} + \sum_{i=1}^k \alpha_i \bigg({\epsilon_1^{i}} -\frac{1}{s} r^{i+1}\bigg)^\top u^{i+1}
       \\
       &+ \alpha_0^2\epsilon_2^{0} + \alpha_0 \bigg({\epsilon_1^{0}} -\frac{1}{s} r^{1}\bigg)^\top u^{1}+\frac{1}{2s}\norm{x^\star-x^{0}}_{2}^2
       \\
       &=\sum_{i=0}^k \alpha_i^2\epsilon_2^{i} + \sum_{i=0}^k \alpha_i \bigg({\epsilon_1^{i}} -\frac{1}{s} r^{i+1}\bigg)^\top u^{i+1}
       \\
       &\quad+\frac{1}{2s}\norm{x^\star-x^{0}}_{2}^2.        
    \end{split}
\end{equation}
Dividing both sides by $\alpha_k^2$ completes the proof of the theorem.
% \begin{align}
%   f(x^{k+1})-f(x^\star) &\leq \frac{1}{\alpha_k^2} \bigg[\sum_{i=0}^k \alpha_i^2\epsilon_2^{i} + \sum_{i=0}^k \alpha_i {\epsilon_1^{i}}^\top u^{i}  
%   -\sum_{i=0}^{k}\frac{\alpha_i}{s}{r^{i+1}}^\top u^{i+1}+\frac{1}{2s}\norm{x^\star-x^{0}}_{2}^2\bigg]
%   \label{FinalAccIPG}.
% \end{align}
%\end{proof}
%%%Corollaries%%%
%\begin{align}    f(x^{k+1})-f(x^\star) &\leq \frac{1}{\alpha_k^2} \bigg[\sum_{i=0}^k \alpha_i^2\epsilon_2^{i} + \sum_{i=0}^k \alpha_i(2\alpha_{i-1} - 1) \big\|\epsilon_1^{i}\big\|_2 \big\|x^\star-x^{i}\big\|_{2}      \notag    \\    &+\sum_{i=0}^{k}\frac{\alpha_i(2\alpha_{i}- 1)}{2s}\big\|r^{i+1}\big\|_2 \big\|x^\star-x^{i}\big\|_{2}    +\frac{1}{2s}\big\|x^\star-x^{0}\big\|_{2}^2\bigg]    \label{}    \\    &\leq \frac{1}{\alpha_k^2} \bigg[\sum_{i=0}^k \alpha_i^2\epsilon_2^{i} + \big\|x^\star-x^{i}\big\|_{2}\Big(\sum_{i=0}^k \alpha_i(2\alpha_{i-1} - 1) \big\|\epsilon_1^{i}\big\|+\sum_{i=0}^{k}\frac{\alpha_i(2\alpha_{i}- 1)}{2s}\big\|r^{i+1}\big\|_2\Big)    \notag    \\    &+\frac{1}{2s}\big\|x^\star-x^{0}\big\|_{2}^2\bigg]    \label{}. \end{align} \begin{align}    f(x^{k+1})-f(x^\star) &\leq \frac{1}{\alpha_k^2} \bigg[\sum_{i=0}^k \alpha_i^2\epsilon_2^{i} + \norm{x^\star-x^{0}}_{2}\sum_{i=0}^k\alpha_i\big( \big\|\epsilon_1^{i}\big\|      +\frac{1}{2s}\big\|r^{i+1}\big\|\big) +\frac{1}{2s}\norm{x^\star-x^{0}}_{2}^2\bigg]    \label{FinalAccIPGCorol1}. \end{align}
\subsection{Proof of Corollary~\ref{corol:2}}
\label{Subsec:Proofcorol2}
%\begin{proof}
Applying Cauchy-Schwarz inequality to \eqref{Eq:Theorem5} yields
\begin{align}
    f(x^{k+1})-f(x^\star) &\leq \frac{1}{\alpha_k^2} 
    \Bigg[\sum_{i=0}^k \alpha_i^2\epsilon_2^{i} + \Big[\sum_{i=0}^k \alpha_i \Big(\norm{\epsilon_1^{i}}_2  
    +\frac{1}{2s}\norm{r^{i+1}}_2 \Big)\Big]\norm{u^{k+1}}_2\\
    &\quad+\frac{1}{s}\norm{x^\star-x^{0}}_{2}^2\Bigg]\notag
    % \\
    % &\quad+\sum_{i=0}^k\sum_{j=1}^k  \alpha_i^2 \Big(\norm{\epsilon_1^{i}}_2  
    % +\frac{1}{s}\norm{r^{i+1}}_2 \Big) 
    % \bigg(\norm{r^{j+1}}_2+s_j \norm{\epsilon_1^{j}}_2 + C_{\rho,s_{k_0}}\bigg)
    \label{ProofEq:Theorem1}.
\end{align}
Using the bound $\norm{r^{i+1}}_2 \leq \sqrt{2s\epsilon_2^{i}}$ from Lemma \ref{Lem:BackwardProxError} completes the proof of Corollary \ref{corol:2}.

% By applying the F\'ejer Theorem \ref{Thm:QuasiFejerAcceleratedPG} to $\norm{u^{k+1}}_2$. 
% Using Lemma \ref{Corol:QuasiFejerUk}, i.e.,
% \begin{align}
%     \norm{u^{i+1}}_{2} &\leq \norm{u^{i}}_{2}+ \alpha_k\bigg(\norm{r^{i+1}}_2+s_k \norm{\epsilon_1^{i}}_2 + C_{\rho,s_{k_0}}\bigg), \quad \forall s_k \leq \frac{1}{L}.
% \end{align}
We have by definition \ref{u} and \ref{u+}
\begin{align}
    & u^{k} = x^\star + (\alpha_k - 1) x^k-\alpha_k y^{k} = x^\star- (x^k+(\alpha_{k-1}-1)( x^{k} -x^{k-1})), \\
    & u^{k+1} = x^\star + (\alpha_k - 1) x^k-\alpha_k x^{k+1} =     x^\star-(x^{k+1} + (\alpha_k - 1) (x^{k+1}-x^k)).
\end{align}
By triangle inequality of the vector norm, we have
\begin{align}
    &\norm{u^{k}}_2\notag
    \leq \norm{(\alpha_k - 1)(x^k-x^\star)}_2+\alpha_k \norm{y^{k}-x^\star}_2,\notag
    \\
    &\norm{u^{k+1}}_2\notag
    \leq \vert \alpha_k - 1 \vert \norm{x^k-x^\star}_2+\alpha_k \norm{x^{k+1}-x^\star}_2\notag
\end{align}
By the nonexpansivity of the displacement operator, i.e., $\mathbf{I} - s \nabla g$, where $\mathbf{I}$ is the identity operator, we obtain
\begin{align}
    \norm{u^{k+1}}_{2}-\norm{u^{k}}_{2} 
    &\leq \alpha_k\bigg|\norm{x^{k+1}-x^\star}_2-\norm{y^k-x^\star}_2\bigg|,\\
    &\leq \alpha_k\bigg|\norm{r^{k+1}}_2+s_k \norm{\epsilon_1^{k}}_2 + C_{\rho,s_{k_0}}\bigg|, \quad \forall s_k \leq \frac{1}{L},\notag
\end{align}
where we have used inequality \eqref{Eq:x-xstar+E2}. Rearranging and taking into account that all the terms inside the absolute value are nonnegative, we obtain
\begin{align}
    \norm{u^{k+1}}_{2} &\leq \norm{u^{k}}_{2}+ \alpha_k\bigg(\norm{r^{k+1}}_2+s_k \norm{\epsilon_1^{k}}_2 + C_{\rho,s_{k_0}}\bigg), \quad \forall s_k \leq \frac{1}{L}.
\end{align}
By backward induction and by dropping second error terms we obtain the approximate bound of \eqref{corol2:2}.
%\end{proof}
\subsection{Proof of Theorem~\ref{Theorem2}}
\label{Subsec:Proofprop5}
This result is about the accelerated version of approximate PGD, but with random proximal computation error $\epsilon_{2_\Omega}$, component-wise bounded gradient error $\epsilon_{1_\Omega}$ and bounded residuals $\norm{x_{_\Omega}^k-x^\star}_2$. As the algorithm generates a sequence of random vectors $\{x_{_\Omega}^k\}$, the residual vector sequence $\{r_{_\Omega}^k\}$ will also be a random.
% \begin{Lemma} \label{lem:6}
Let $\nu_{_\Omega} = {\epsilon_1^{i}} -\frac{1}{s} r^{i+1}$ and let $\{T_k\}$ denote the second error term in \eqref{Eq:Theorem1} [Theorem \ref{Theorem1}], i.e.,
\begin{equation}
T_k =
\left\{
\begin{array}{ll}
0, &\,\, k=0
\\
\sum_{i=1}^{k}\alpha_i{\nu_{_\Omega}^{i}}^\top u_{_\Omega}^i,&\,\,k = 1, 2, \ldots\,,
\end{array}
\right.
\label{138a}
\end{equation}
where
\begin{equation}
    u_{_\Omega}^i = x^\star- x_{_\Omega}^i + (1-\alpha_{i-1})(x_{_\Omega}^{i} -x_{_\Omega}^{i-1}).
    \label{138c}
\end{equation}
The first step is to show that $\{T_k\}$ is a martingale. Recall that a sequence of random variables $T_0,T_1,\dots$ is a martingale with respect to the sequence $X_0,X_1,\dots$ if, for all $k\geq0$, the following conditions hold:
\begin{itemize}
    \item $T_k$ is a function of $X_0,X_1,\dots,X_k$;
    \item $E(|T_k|) < \infty$;
    \item $E(T_{k+1}|X_0,X_1,\dots,X_k) = T_k$.
\end{itemize}
A sequence of random variables $T_0,T_1,\dots$ is called a martingale when it is a martingale with respect to itself. That is, $E(|T_k|) < \infty$, and $E(T_{k+1}|T_0,T_1,\dots,T_k) = T_k$.
We now show that Assumptions \ref{Assum:RandomVectors} and \ref{Assum:RandomVectors2} imply that $\{T_k\}_{k\geq 0}$ is a martingale. % \end{Lemma}
Specifically, \eqref{Eq:AssumptionRandomZeroMean} and \eqref{Eq:ResidAssumptionRandomZeroMean}, we have
\begin{equation}
\mathbb{E}\big[\nu_{_\Omega}^{k}\big\vert \nu_{_\Omega}^{1}\dots \nu_{_\Omega}^{k-1}\big] =  \mathbb{E}\big[\nu_{_\Omega}^{k}\big] = 0.\notag 
\end{equation}
And from \eqref{Eq:AssumptionRandomInnerProduct} and \eqref{Eq:ResidAssumptionRandomInnerProduct}, we have
\begin{equation}
\mathbb{E}\big[{\nu_{_\Omega}^{k}}^\top x_{_\Omega}^k\big\vert \nu_{_\Omega}^{1}\dots \nu_{_\Omega}^{k-1},x_{_\Omega}^{1}\dots x_{_\Omega}^{k-1}\big] = \mathbb{E}\big[{\nu_{_\Omega}^{k}}^\top x_{_\Omega}^k\big] = 0.\notag 
\end{equation}
% \begin{proof}
We have from \eqref{138a},
\begin{equation}
        T_k = T_{k-1} + \alpha_k{\nu_{_\Omega}^{k}}^\top u_{_\Omega}^k.
\end{equation}
Substituting for $u_{_\Omega}^k$ using \eqref{138c} gives, 
\begin{equation}
        T_k = T_{k-1} + \alpha_k\alpha_{k-1}{\nu_{_\Omega}^{k}}^\top (x^\star-x_{_\Omega}^k)+\alpha_k(1-\alpha_{k-1}){\nu_{_\Omega}^{k}}^\top (x^\star-x^{k-1}).
\end{equation}
Taking the conditional expectation from both sides yields
\begin{align}
        \mathbb{E}\big[T_k|T_1\dots T_{k-1}\big] &= \mathbb{E}\big[T_{k-1} + \alpha_k\alpha_{k-1}{\nu_{_\Omega}^{k}}^\top (x^\star-x_{_\Omega}^k)
        \\
        &\quad\notag+\alpha_k(1-\alpha_{k-1}){\nu_{_\Omega}^{k}}^\top (x^\star-x^{k-1})|T_1\dots T_{k-1}\big]
        \\
         \notag
         &= \mathbb{E}\big[T_{k-1}|T_1\dots T_{k-1}\big]  + \mathbb{E}\big[\alpha_k\alpha_{k-1}{\nu_{_\Omega}^{k}}^\top (x^\star-x_{_\Omega}^k)
         \\
        &\quad\notag+\alpha_k(1-\alpha_{k-1}){\nu_{_\Omega}^{k}}^\top (x^\star-x^{k-1})|T_1\dots T_{k-1}\big]
        \\
        \notag
         &= T_{k-1}  + \mathbb{E}\big[\alpha_k\alpha_{k-1}{\nu_{_\Omega}^{k}}^\top (x^\star-x_{_\Omega}^k)|T_1\dots T_{k-1}\big]
         \\
        &\quad\notag+\mathbb{E}\big[\alpha_k(1-\alpha_{k-1}){\nu_{_\Omega}^{k}}^\top (x^\star-x^{k-1})|T_1\dots T_{k-1}\big]
    \\
        \label{4.92}
         &= T_{k-1}  + \alpha_k\alpha_{k-1}\mathbb{E}\big[{\nu_{_\Omega}^{k}}^\top (x^\star-x_{_\Omega}^k)|T_1\dots T_{k-1}\big]
         \\
        &\quad\notag+\alpha_k(1-\alpha_{k-1})\mathbb{E}\big[{\nu_{_\Omega}^{k}}^\top |T_1\dots T_{k-1}\big](x^\star-x^{k-1})
    \\
        \label{4.93}
         &= T_{k-1}  + \alpha_k\alpha_{k-1}\mathbb{E}\big[{\nu_{_\Omega}^{k}}^\top (x^\star-x_{_\Omega}^k)|T_1\dots T_{k-1}\big]
    \\
        \label{4.94}
         &= T_{k-1}  + \alpha_k\alpha_{k-1}\mathbb{E}\big[{\nu_{_\Omega}^{k}}^\top x^\star-{\nu_{_\Omega}^{k}}^\top x_{_\Omega}^k|T_1\dots T_{k-1}\big]
    \\
        \label{4.95}
         &= T_{k-1}  + \alpha_k\alpha_{k-1}\mathbb{E}\big[{\nu_{_\Omega}^{k}}^\top x^\star|T_1\dots T_{k-1}\big]
         \\
        &\quad\notag-\alpha_k\alpha_{k-1}\mathbb{E}\big[{\nu_{_\Omega}^{k}}^\top x_{_\Omega}^k|T_1\dots T_{k-1}\big]
    \\
        \label{4.96}
         &= T_{k-1}  + \alpha_k\alpha_{k-1}\mathbb{E}\big[\nu_{_\Omega}^{k}\big]^\top x^\star-\alpha_k\alpha_{k-1}\mathbb{E}\big[\mathbb{E}\big[\nu_{_\Omega}^{k}|x_{_\Omega}^k\big]^\top x_{_\Omega}^k\big]
    \\
        \label{4.97}
         &= T_{k-1},
\end{align}
From \eqref{4.92} to \eqref{4.93}, we factorised the deterministic vector difference, i.e., $x^\star-x^{k-1}$. From \eqref{4.96} to \eqref{4.97}, we used the zero mean error assumption, i.e, $E\big[\nu_{_\Omega}^{k}\big] = 0$ and the tower rule of conditional expectations, i.e., $\mathbb{E}\bigg[\mathbb{E}\big[\nu_{_\Omega}^{k}|x_{_\Omega}^k\big]^\top x_{_\Omega}^k\bigg]=\mathbb{E}\big[\nu_{_\Omega}^{k}\big]^\top x_{_\Omega}^k$. Therefore, $T_1,T_2,\dots,T_{k}$ is a martingale. 

In what follows, we establish upper bounds on the absolute value of the martingale $\{T_k\}$. By noticing that $\big\vert T_k - T_{k-1}\big\vert = \big\vert{\nu_{_\Omega}^k}^\top u_{_\Omega}^k \big\vert \leq \alpha_k\Big(\sqrt{n}\delta M_{\nabla g}+\sqrt{2\epsilon_2^k/s}\Big)\norm{u_{_\Omega}^k}_{2}$, where we have used Cauchy-Schwarz, etc. Lemma \ref{lem:3} then yields
\begin{align}
        |T_k|\leq \gamma|\delta|M_{\nabla g}\sqrt{n\sum_{i=1}^{k} i^{2}\norm{u_{_\Omega}^i}_{2}^2}+\gamma\sqrt{2s}\sqrt{\sum_{i=1}^{k} i^2\norm{u_{_\Omega}^i}_{2}^2\epsilon_2^i} \\\notag\leq \gamma|\delta|M_{\nabla g}\sqrt{n}\sum_{i=1}^{k} i\norm{u_{_\Omega}^i}_{2}+\gamma\sqrt{2s}\sum_{i=1}^{k} i\norm{u_{_\Omega}^i}_{2}\sqrt{\epsilon_2^i}
\end{align}
where $M_{\nabla g} = \underset{i \in \mathbb{N}_{+}} \sup\bigg\{\norm{\nabla g(x^i)}_{\infty}\bigg\}$ is the upper bound on the elements of the gradient.
Let $\{S_k\}$ denote the first error term in \eqref{Theorem1} [Theorem \ref{Theorem1}] i.e.,
\begin{equation}
        S_k = \sum_{i=0}^{k}\alpha_{i}^2\epsilon_{2_\Omega}^{i}.
\end{equation}
If $0 \leq \epsilon_{2_\Omega}^k \leq \varepsilon_0$ and $\alpha_k \leq k$, then applying Lemma \ref{lem:4} to $S_k = \sum_{i=0}^{k}\alpha_{i}^2\epsilon_{2_\Omega}^{i}$ with $0 \leq \epsilon_{2_\Omega}^k \leq \varepsilon_0$ and $\alpha_k \leq k$ yields
\begin{equation}
    S_k \leq \mathbb{E}\big[\sum_{i=0}^{k}\alpha_{i}^2\epsilon_{2_\Omega}^{i}\big] + \frac{\gamma}{2} \sqrt{\sum_{i=1}^{k}i^{4}(\epsilon_{2_\Omega}^i)^2}
    \leq
    \mathbb{E}\big[\sum_{i=0}^{k}\alpha_{i}^2\epsilon_{2_\Omega}^{i}\big] + \frac{\gamma}{2}\sum_{i=1}^{k}i^{2}\epsilon_{2_\Omega}^i,
\end{equation}
with probability at least $1-2\exp(-\frac{\gamma^2}{2})$. Applying Lemma \ref{lem:sumprob} completes the proof of Theorem \ref{Theorem2}.
% \end{proof}
\section{Experimental Results}
\label{Sec:Experiments}
We now experimentally assess the proposed bounds on an $l_1$ regularized model predictive control (MPC) and a synthesized LASSO problem. We consider a discrete linear time invariant (LTI) state space model of a spacecraft~\cite{hegrenaes2005spacecraft} and randomly generated data for the second experiment. For the MPC, approximation errors are simulated error sequences generated from a truncated Gaussian distribution. For the LASSO experiment, real round-off errors are generated using approximate algebra user-defined C++ functions that were previously developed and tested in~\cite{yun2020sspd}. Early termination errors are generated using adjustable CVX solver's tolerance parameters as described in~\cite{Grant11-CVX,grant2009cvx}.

We first describe the setup and experiments for the MPC problem.
\subsection{Model Predictive Control (MPC)}
\subsubsection{MPC problem formulation}
We consider a discrete-time state-space model
\begin{align}
    x(k+1) &= Ax(k) + Bu(k) \\ \label{discrete}
    y(k) &= Cx(k), \notag
\end{align}
where $x(k) \in \mathbb{R}^n$ is the state, $y(k) \in \mathbb{R}^m$ the observations and the control vector is given by $u(k) \in \mathbb{R}^p$. The matrices $A, B, C$ have dimensions $n \times n$, $n \times p$, and $ m \times n$, respectively. 
%The latter is obtained from the continuous-time state-space model
% \begin{align}
%     \dot x(t) &= Ax(t) + Bu(t) \\
%     y(t) &= Cx(t), \notag
% \end{align}
% either by discretization (with sample time $\Delta t$) or system identification.
%The augmented discrete-time state-space model $(\underset{n+m}{A_e},\underset{(n+m)\times p}{B_e},\underset{m\times (n+m)}{C_e})$ is given by,
% \begin{align}
%     z(k+1) &= A_ez(k) + B_eu(k) \\
%     y(k) &= C_ez(k), \notag
% \end{align}
% where $z \in\mathbb{R}^{n+m}$, $u\in\mathbb{R}^p$ and $y\in\mathbb{R}^m$ stand for the augmented state, control and output vectors defined as follows,
% \begin{align}
%     z(k) &= \big[\Delta x(k) \quad y(k)\big]^\top  = \big[\big(x(k) -
%     x(k-1)\big) \quad y(k)\big]^\top ;\notag\\
%     u(k) &= U(k) = u(k) - u(k-1);\notag\\
%     y(k) &= y(k) = [ 0_{m\times n} \quad I_{m} ] z(k), \notag
% \end{align}
% The augmented discrete-time state space model $(A_e, B_e, D_e)$ is obtained from the discrete-time state space model $(A, B, D)$ by block matrix augmentation as follows,
% \begin{align}
%     A_e &= 
%     \begin{bmatrix}
%     A & 0_{n\times m}\\
%     C A & I_m
%     \end{bmatrix};\notag\\
%     B_e &= 
%     \begin{bmatrix}
%     B\\
%     CB
%     \end{bmatrix};\notag\\
%     C_e &= 
%     \begin{bmatrix}
%     0_{m\times n} & I_{m}
%     \end{bmatrix}\notag.
% \end{align}
We assume that the state $x(k)$ is observable for all $k> 0$. Given an observation $x(k)$ at time $k$,, MPC algorithms search for a future control movement sequence of length $N_c$, i.e.,  $\{\Delta u(k+j)\}_{j=0}^{N_c-1}$, also known as the control horizon, that minimizes some defined error function between the predicted output sequence $\{y(k+l|k)\}_{l=1}^{N_p}$, of length $N_p$, and a defined set-point signal $r(t)$, where the latter is assumed to be constant $r(k) = r$ within the prediction time window $k \leq t \leq k + N_p$. $N_p$ is also known as the MPC prediction horizon. For $r(k)=0_{m}$, the MPC's objective is to drive the state of the system to the origin, a problem also known as classical regulator problem in the literature. Let $Y = [y(k+1|k) \quad y(k+2|k) \quad \dots \quad y(k+N_p|k)]$ denote the stacked output vector,
$X = [x(k+1) \quad x(k+2) \quad \dots \quad x(k+N_p)]$ denote the stacked state vector. The stacked input or future control vector, i.e., the solution of the MPC problem, is given by  $U = [\Delta u(k) \quad \Delta u(k+1) \quad \dots \quad \Delta u(k+N_c-1)]$. The sequentially calculated vector $Y \in \mathbb{R}^{m\times N_p}$ using the future control vector $U\in\mathbb{R}^{p\times (N_c -1)}$ satisfies the following equation,
\begin{equation}
    Y = \Psi x(k) + \Phi U \label{mpc-out},
\end{equation}
where
\begin{align}
    \Psi = \begin{bmatrix}
    CA\\CA^2\\CA^3\\ \vdots \\CA^{N_p}
    \end{bmatrix};
    \Phi = \begin{bmatrix}
    CB&0&0&\dots&0\\
    CAB&CB&0&\dots&0\\
    CA^2B&CAB&CB&\dots&0\\
    \vdots \\
    CA^{N_p-1}B&CA^{N_p-2}B&CA^{N_p-3}B&\dots&CA^{N_p-N_c}B
    \end{bmatrix}. \notag
\end{align}
The unconstrained optimization problem is therefore posed as minimizing
\begin{equation}
    J := (R_s - Y)^\top Q(R_s - Y) =  \norm{R_s - Y}_{Q}^2\label{MPC},
\end{equation}
where $R_s$ is an $m\times N_p$ matrix resulting from $N_p$ times stacking of the set-point vector $r(k)$, i.e.,  $R_s:=[r(k)\quad r(k)\quad \dots\quad r(k)]$, and  $Q$ is a constant matrix. Prior knowledge about the control vector $u$ is usually embedded into the problem in the form of a regularization function $h$ as follows,
\begin{equation}
    \min_{U \in\mathbb{R}^{p\times (N_c -1)}} \quad J := \norm{R_s - Y}_{Q}^2 + \lambda h(U)
\end{equation}
where $h: \mathbb{R}^{p\times N_c-1} \rightarrow \mathbb{R}$ is usually an $\ell_\zeta$-norm-like function and $\norm{.}_{Q}$ is the matrix weighted induced norm. %https://math.stackexchange.com/questions/1050164/what-is-the-matrix-norm-induced-by-weighted-vector-norm
The case when $\zeta = 0$ represents the $\ell_0$-regularized MPC problem or maximum support (also known as maximum hands-off \cite{nagahara2015maximum}) control, which is discontinuous and non-convex, and therefore difficult to solve.

If we set $\zeta = 1$ we obtain the $\ell_1$-regularized MPC problem, which is a non-smooth convex problem whose solution is a minimum fuel control sequence. The latter problem can be viewed as a relaxed $\ell_0$-regularized MPC problem which also yields the sparsest solution among other possible solutions. The discrete $\ell_1$-regularized MPC problem is equivalent to the following LASSO regression problem,
\begin{equation}
    \min_{U \in\mathbb{R}^{p\times (N_c -1)}} \quad J :=\norm{R_s - Y}_{Q}^2 + \lambda \norm{U}_1 \label{1-norm}
\end{equation}
where $U$ can be viewed as a sequence of control vector parameters and $\lambda$ is a  non-negative regularization parameter.

The case when $\zeta > 1$ yields a standard smooth optimal control problem. For instance, if the energy of the control signal is meant to be minimized to prevent overheating of some actuators or to reduce signal transmission cost, then regularizing \eqref{MPC} using the $\ell_2$-norm as the regularization function can be formulated as follows, 
\begin{equation}
    \min_{U \in\mathbb{R}^{p\times (N_c -1)}} \quad J :=\norm{R_s - Y}_{Q}^2 + \lambda \norm{U}_{R}^2 \label{2-norm}
\end{equation}
where $Q \in \mathbb{R}^{(n+m)\times (n+m)}$ and $R \in \mathbb{R}^{p\times p}$ are positive semi-definite weight matrices and $\lambda$ is a non-negative regularization parameter. \eqref{2-norm} is equivalent to the standard LQR (linear quadratic regulator) control problem,
\begin{equation}
    \min_{U \in\mathbb{R}^{p\times (N_c -1)}} \quad J :=(R_s - Y)^\top Q (R_s - Y) + U^\top R U,
\end{equation}
whose solution can be obtained in closed form:
\begin{equation}
    U = (\Phi^\top\Phi + R)^{-1}\Phi^\top(R_s-\Psi x(k)).
\end{equation}

Note that the solution to the following Elastic-Net MPC problem,
\begin{equation}
    \min_{U \in\mathbb{R}^{p\times (N_c -1)}} \quad J:= \norm{R_s - Y}_{Q}^2 + \lambda_{\ell_2} \norm{U}_{R}^2 + \lambda_{\ell_1} \norm{U}_1 \label{2-1-norm}
\end{equation}
yields a trade-off between minimum fuel and minimum energy solutions. However, the solution to \eqref{2-1-norm} as well as \eqref{1-norm} is usually not available in closed form; therefore, an iterative algorithm such as the introduced \textit{proximal gradient} algorithms [\eqref{Eq:PGNoiseless} and~\eqref{Eq:BasicPGNoiseless}] is a suitable choice to tackle both types of problems.

Let us now reformulate problem \eqref{2-1-norm} as a standard unconstrained LASSO. Choosing $\lambda_{\ell_2}=1$ and $\lambda_{\ell_1}=\lambda$, we know that the objective function of \eqref{2-1-norm} is equivalent to
\begin{subequations}
    \begin{equation}
            (R_s - Y)^\top Q (R_s - Y) +  U^\top R  U + \lambda \norm{ U}_1 \label{28a}
    \end{equation}
Substituting \eqref{mpc-out} in \eqref{28a}, we obtain
    \begin{equation}
    (R_s -\Psi x(k) - \Phi U)^\top Q (R_s - \Psi x(k) - \Phi U) +  U^\top R  U + \lambda \norm{ U}_1  
    \label{28b}.
    \end{equation}
Expanding and rearranging \eqref{28b} yields
    \begin{equation}
        \begin{split}
            (R_s -\Psi x(k))^\top Q(R_s - \Psi x(k)) - (R_s - \Psi x(k))^\top Q\Phi U\\ - U^\top \Phi^\top Q(R_s - \Psi x(k)) + U^\top( \Phi^\top Q\Phi + R) U + \lambda \norm{U}_1.
        \end{split}
    \end{equation}
% which can be written in inner product notation,
%     \begin{equation}
% %    \resizebox{0.91\hsize}{!}{$
%         \bigg[\big(\Phi^\top Q\Phi + R\big)^{\frac{1}{2}}U -\big(\Phi^\top Q\Phi + R\big)^{-\frac{1}{2}}\Phi^\top Q\big(R_s - \Psi x(k)\big)\bigg]^\top \bigg[\big(\Phi^\top Q\Phi + R\big)U -\big(\Phi^\top Q\Phi + R\big)^{-\frac{1}{2}}\Phi^\top Q\big(R_s - \Psi x(k)\big)\bigg] + \lambda \norm{U}_1,
% %        $}
%     \end{equation}
Problem \eqref{2-1-norm} can then be written as a \textit{composite optimization problem}
\begin{equation}
    \begin{split}
            \min_{U \in\mathbb{R}^{p\times (N_c -1)}} F(U):=g(U)+h(U),
    \end{split}
    \label{Eq:MPCLASSO}
\end{equation}
where $g$ and $h$ are given in matrix induced norm notation as
\begin{equation}
    \begin{split}
            g(U) &:= \norm{\big(\Phi^\top Q\Phi 
            + R\big)^{\frac{1}{2}}U -\big(\Phi^\top Q\Phi + R\big)^{-\frac{1}{2}}\Phi^\top Q\big(R_s - \Psi x(k)\big)}_2^2;\\
            h(U) &:= \lambda \norm{U}_1.
            \label{(g(U),h(U))}
    \end{split}
\end{equation}
This regularized MPC problem is formulated as a  \textit{composite optimization problem}. This can also be viewed as a LASSO problem which will be solved iteratively using the \textit{inexact proximal gradient algorithm} \eqref{Eq:PGDApproximateRepeated} or its accelerated variant \eqref{Eq:AcceleratedPGRepeated}.
\end{subequations}
\subsubsection{Experimental setup for MPC}
We consider a discrete linear time invariant (LTI) state space model and we allow MPC to run for $N_p = N_c = 10$ time steps. We also let the proximal gradient algorithm perform $300$ iterations. For the accelerated case, we consider a more time-critical situation where we only allow for a maximum of $20$ iterations within the MPC horizon time window of length $N_p = N_c = 2$. Both proximal and gradient errors $\epsilon_1^k$ and $\epsilon_2^k$ are independent and identically distributed (i.i.d) sequences randomly generated from a univariate and multivariate truncated Gaussian distributions $\mathcal{N}(0,1)$ truncated to the interval $[0,\epsilon_0]$; and $\mathcal{N}(0,\mathcal{I}_n)$ truncated to the interval $[-\delta,\delta]$, respectively, where $\mathcal{I}_n$ is the identity matrix of dimension $n$. The truncation limits are assumed to be fixed and known throughout the simulation and with the same dimension as their corresponding error terms.

 In this experiment, we use the following simple spacecraft LTI discrete state-space model  $(A,B,C)$ \cite{hegrenaes2005spacecraft}
\begin{align}
    A &= \left[\begin{array}{ccccccc} 0 & 0 & 0.8416 & 0 & -1.267 & 0 & 0\\ 0 & 0 & 0 & 0 & 0 & -0.8107 & 0\\ -0.9763 & 0 & 0 & 0 & 0 & 0 & -0.04749\\ 0 & 0 & 0 & 0 & 0 & 0.8107 & 0\\ 0.5 & 0 & 0 & 0 & 0 & 0 & 0\\ 0 & 0.5 & 0 & 0 & 0 & 0 & 0\\ 0 & 0 & 0.5 & 0 & 0 & 0 & 0 \end{array}\right];\notag\\
    B &= \left[\begin{array}{cccc} 0.2353 & 0 & 0 & 0\\ 0 & 0.2306 & 0 & -0.2306\\ 0 & 0 & 0.2729 & 0\\ 0 & -0.2306 & 0 & 25000.0\\ 0 & 0 & 0 & 0\\ 0 & 0 & 0 & 0\\ 0 & 0 & 0 & 0 \end{array}\right];\notag\\
    C &= \mathcal{I}_{7}, \notag
\end{align}
with the following state and control vectors

\begin{align}
x(k) &= [\omega_1(k),\omega_2(k),\omega_3(k),\omega_w(k),\varepsilon_1(k),
\varepsilon_2(k),\varepsilon_3(k)]^\top \notag
\\
u(k) &=[\tau_1(k),\tau_2(k),\tau_3(k),\tau_w(k)]^\top.\notag
\end{align}
where $[\omega_1(k),\omega_2(k),\omega_3(k)]^\top$ are the angular velocities of the bodyframe relative to the orbit frame, $\omega_w(k)$ is the angular velocity of the wheels about their spin axes, and $\varepsilon_1(k),
\varepsilon_2(k),\varepsilon_3(k)$ are Euler parameters. 

\begin{figure}[H]
\centering
\includegraphics[width=10cm]{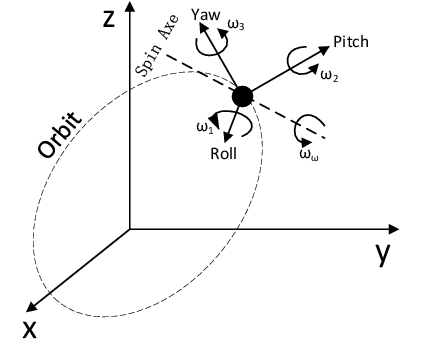}
\caption{Attitude Control \cite{hegrenaes2005spacecraft,yun2020sspd}: Seven states are considered here,  Roll, Pitch, Yaw, $\omega_1$, $\omega_2$, $\omega_3$, $\omega_w$, where Roll, Pitch, Yaw describe the rotating angles of the body frame relative to the orbit  frame, and $\omega_1$, $\omega_2$, $\omega_3$ are the corresponding angular velocities. $\omega_w$ is the angular velocity along the spin axis. The wheels are controlled by the input voltages, $\tau_1$, $\tau_2$, $\tau_3$, $\tau_w$ accordingly.}
\end{figure}

For the sake of simplicity, we relax the original parameter space and control constraints.

For simulation, we select the regularized MPC problem  matrices as follows,
\begin{align}
    Q &= \text{diag}(500.0, 500.0, 500.0, 10^{-7}, 1.0, 1.0, 1.0, 500.0, 500.0, 500.0, 10^{-7} , 1.0, 1.0, 1.0); \notag \\
    R &= \text{diag}(200.0, 200.0, 200.0, 1.0, 200.0, 200.0, 200.0, 1.0), \notag
\end{align}
and set the regularization parameter $\lambda = 16.79$. The Lipschitz constant of the quadratic term of problem \eqref{Eq:MPCLASSO} is $L = 8388$, and therefore, an initial stepsize of $\frac{1}{L}$ is adopted. The stepsize is then updated according to procedure B2 in \cite[Section 10.4.2]{beck2017first} with update parameter $\eta = 0.5$.\\ For the simulated errors, we use $\epsilon_{1_\Omega}^k = \nabla g(x^k)\odot \text{trand}(-\delta,\delta)$ where $g(x)$ is defined by~\eqref{(g(U),h(U))}, 
% \begin{align}
%     g(x) := \norm{\big(\Phi^\top Q\Phi + R\big)^{\frac{1}{2}}x -\big(\Phi^\top Q\Phi + R\big)^{-\frac{1}{2}}\Phi^\top Q\big(R_s - \Psi x(k)\big)}_2^2, \notag
% \end{align}
and $\text{trand}(a,b)$ is the doubly truncated normal distribution \cite{cha2013rethinking} with lower and upper truncation points $a$ and $b$, respectively. $\epsilon_2^{k} = \text{trand}(0,\epsilon_0)$ where $\delta$ and $\epsilon_0$ are variable scalar upper bounds on the gradient and proximal computation errors, respectively. The output of the distribution function $\text{trand}(l,u)$ is a vector randomly generated from the standard multivariate normal distribution truncated over the region $[l,u]$. %$A$ and $b$ are problem specific $ m \times n$ matrix and $m \times 1$ vector, respectively.
\subsubsection{Results (Approximate PG-based MPC control of a spacecraft)}
\label{subsub:ResultsPG}
The deterministic and probabilistic bounds of Theorems \ref{prop:1} and \ref{prop:3} for the convex case are both plotted and superimposed with the bound  \eqref{schmidt1} of \cite{schmidt2011convergence} in \autoref{Figure2} and \autoref{Figure3}. The latter is denoted by \verb|Schmidt_1| and the proposed bounds are denoted by \verb|Thrm_1| and \verb|Thrm_2|, respectively. The dashed lines, \verb|Imprvm_Thrm_1| and \verb|Imprvm_Thrm_3|, represent the improvement of the proposed bounds over the bound given by \eqref{schmidt1}. Notice that we expect the effect of $\epsilon_1^k$ to be negligible near the optimum since the latter is proportional to the magnitude of the gradient. However, depending on the choice of the upper bound of $\epsilon_2^k$ in the proximal operation step \eqref{Eq:PGDApproximateRepeated}, the effect of the error $\epsilon_2$ can still be significant and sometimes permanent even near the optimum as we will see in the next few examples.

In the presence of small gradient and proximal computation errors ($|\epsilon_1^k| \leq 2.2\times 10^{-12}; \epsilon_2^k \leq 10^{-12}$), the bounds in Theorem \ref{prop:1}, Theorem \ref{prop:2} and~\eqref{schmidt1} practically coincide as shown in \autoref{Figure2}.
\begin{figure}[H]
\centering
\captionsetup{justification=centering}
\includegraphics[width=10cm]{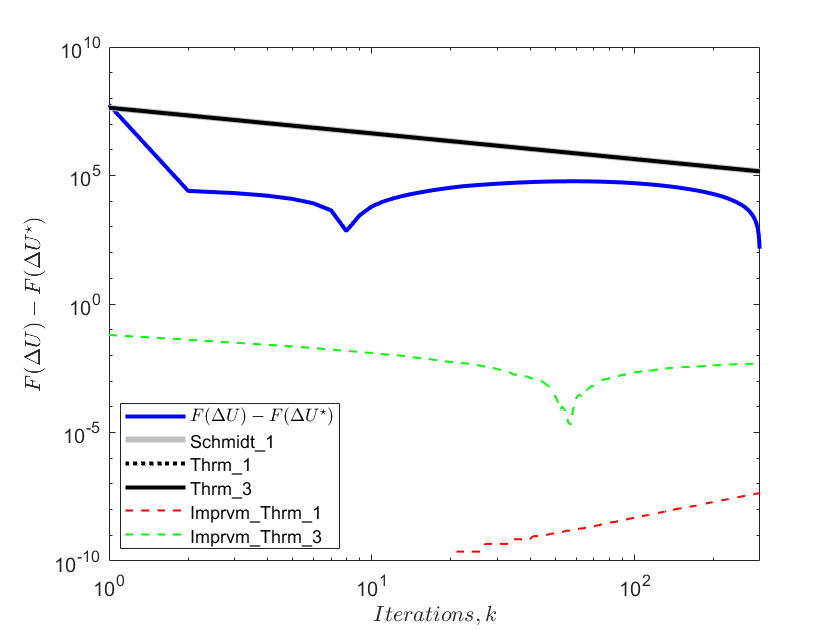}
\caption{Upper bounds based on Theorems \ref{prop:1} \& \ref{prop:3} vs Proposition 1 (\eqref{schmidt1}) in Schmidt et al. 2010 \cite{schmidt2011convergence} (with $\delta = 2.2\times 10^{-12}; \epsilon_0 = 10^{-12}$).\label{Figure2}}
\end{figure}

 In \autoref{Figure3} and \autoref{Figure4}, the simulated error magnitudes are larger and consequently a significant improvement can be seen. Notice how both proposed bounds (in Corollary \ref{corol:1} and Theorem \ref{prop:3}) become comparatively tighter as can be illustrated by the improvement that was achieved in both examples. 
\begin{figure}[H]
\centering
\captionsetup{justification=centering}
\includegraphics[width=10cm]{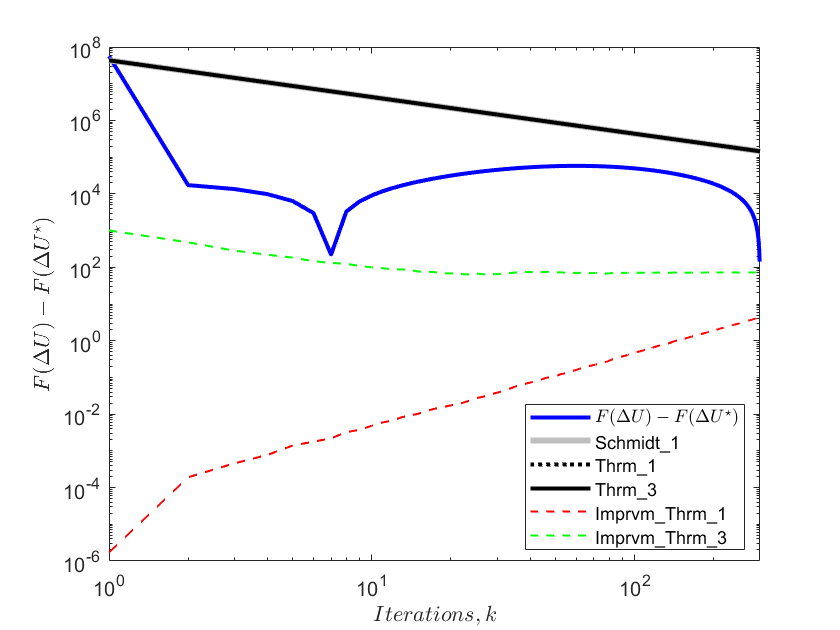}
\caption{Upper bounds based on Theorems \ref{prop:1} \& \ref{prop:3} vs Proposition 1 (\eqref{schmidt1}) in Schmidt et al. 2010 \cite{schmidt2011convergence} (with $\delta = 2.2\times 10^{-4}; \epsilon_0 = 10^{-4}$).\label{Figure3}}
\end{figure}
\begin{figure}[H]
\centering
\captionsetup{justification=centering}
\includegraphics[width=10cm]{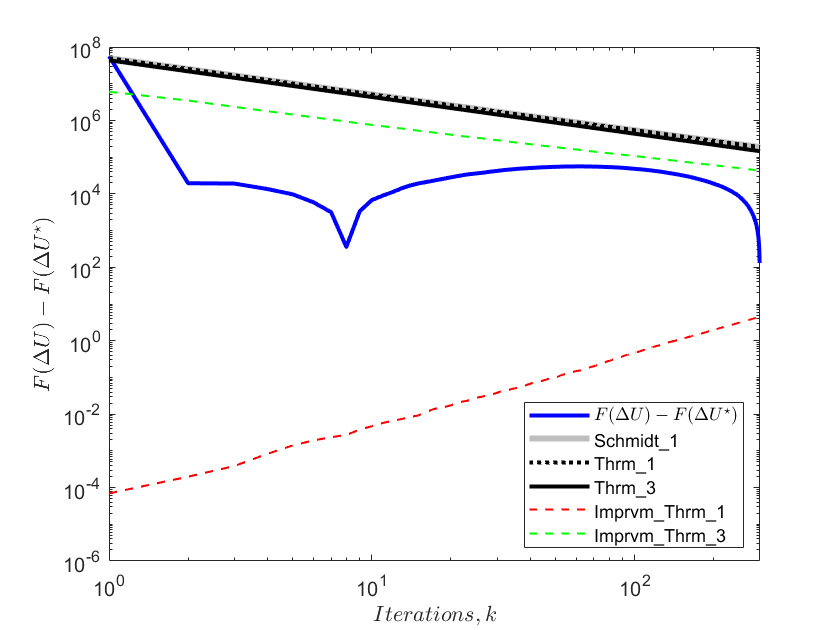}
\caption{Upper bounds based on Theorems \ref{prop:1} \& \ref{prop:3} vs Proposition 1 (\eqref{schmidt1}) in Schmidt et al. 2010 \cite{schmidt2011convergence} (with $ \delta = 2.2\times 10^{-1}; \epsilon_0 = 10^{-4}$).\label{Figure4}}
\end{figure}
% As mentioned earlier, contrary to \cite{schmidt2011convergence} and other propositions in \cite{rockafellar1976monotone,combettes2005signal,villa2013accelerated}, we do not adopt here any summability or other assumptions on the error terms except the practically justifiable zero mean of $\epsilon_1$, stationarity of the stopping error $\epsilon_2$ and upper boundedness of both. Following \cite{higham2020sharper}, data mean independence and error mean independence are the only two unrealistic assumptions that we have assumed (in Theorems \ref{prop:2} and \ref{prop:3}) so far.
The improvements in \autoref{Figure2} to \autoref{Figure4} suggest that our proposed bounds are tighter, and hence more accurate for application theoretical guarantees.  
The fact that probabilistic bounds are more stable than their deterministic counterparts is the consequence of the latter being adaptively computed from running error term and, therefore, continuously adjusted whilst the latter are \textit{a priori} bounds obtained from prior knowledge on the expected manifestation of error terms before any interaction with the computing machine. The latter can be calculated before running the algorithm, i.e, before error realizations are even generated, assuming machine precision ($\delta$) and solver tolerance ($\epsilon_0$) are specified beforehand together with a good estimate of the optimal solution. In other words, if an optimal solution $x^\star$ is known, then the bound of Theorem \ref{prop:3} becomes a function of the iteration counter $k$; therefore, a desired maximum number of iterations $k_0$ can be determined with probability $(p^k)\big(1-2\exp(-{\gamma^2}/{2})\big)$ if a specified level of suboptimaly or inefficiency $|f-f^\star|$ is to be tolerated, and vice versa.
\subsubsection{Results (Approximate Accelerated PG-based MPC control of a spacecraft)}
As a result of applying the accelerated PG \eqref{Eq:AcceleratedPG} to solve the MPC problem \eqref{Eq:MPCLASSO}, \autoref{Figure5} shows how both bounds of Theorems \ref{Theorem1} \& \ref{Theorem2} (denoted by \verb| Thrm_4| and \verb| Thrm_5| in Fig. 5  below) converge despite the amplified noise and remaining residual error. The bound \verb| Schmidt_2| refers to \eqref{schmidt2}.
%whereas, the previously proposed theoretical bound  \cite[Proposition 2.]{schmidt2011convergence} fails to cope with non-summable error terms and amplified perturbations.
%\begin{figure}[H]
%\centering
%\includegraphics[width=13cm]{Eq:Theorem1-5 e1-e2.png}
%\caption{Simulated inexact gradient computation error ($\epsilon_1$) and proximal computation error ($\epsilon_2$) for the accelerated scheme}
%\end{figure}
% \begin{figure}[H]
%\centering
\begin{figure}[H]
\centering
\includegraphics[width=10cm]{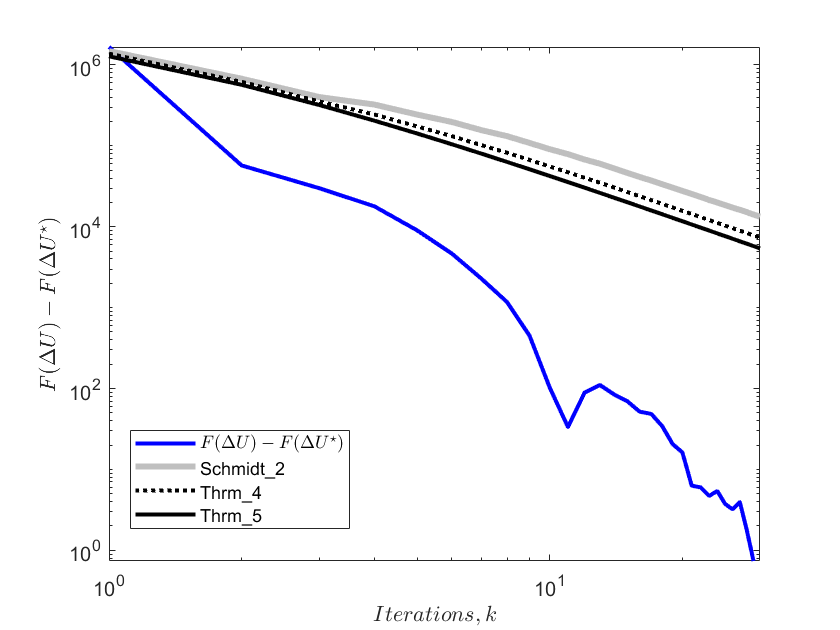}
\caption{Upper bounds based on Theorems \ref{Theorem1} \& \ref{Theorem2} vs Proposition 2 in Schmidt et al. 2010 \cite{schmidt2011convergence} (with $\delta = 2.2\times10^{-1}$; $\epsilon_0 = 10^{-4})$.\label{Figure5}}
\end{figure}
% The pattern  fact that deterministic bounds are less stable than their probabilistic counterparts is the consequence of the former being adaptively computed from running error terms; and therefore, continuously adjusted whilst the latter are \textit{a priori} bounds obtained from prior knowledge on the expected manifestation of error terms before any interaction with the computing machine.
\autoref{Figure5} suggests that by using the results of Theorems \ref{Theorem1} and \ref{Theorem2} we can bound the suboptimality, i.e., $f-f^\star$, more tightly and the improvement is more significant in the accelerated case. However, the improvement is more remarkable in the early iterations of the accelerated proximal gradient algorithm and slightly drops over iterations which is different than the constant improvement in the nonaccelerated version. 

There is a notable oscillation pattern in the last iterations of the algorithm (blue line) that can be explained by the error amplification caused by acceleration.  

At this point, it is worth emphasizing the fact that bounds resulting from Theorem \ref{Theorem2} are \textit{a priori} probabilistic upper bounds which can be calculated before running the algorithm, and hence more robust to error fluctuations. In other words, if the optimal solution $x^\star$ is known, then the bound of Theorem \ref{Theorem2} becomes a single variable function of the iteration counter $k$; therefore, a desired $k_0$ can be determined with probability $1-2\exp(-{\gamma^2}/{2})$ if a specified level of suboptimaly or ineffeciency $|f-f^\star|$ is to be tolerated, and vice versa.

The improvements seen so far seem to be highly dependent on the subjective assumptions about the error models (truncated Gaussian). In order to present a more objective evidence, we apply the proposed bounds of Theorems \ref{prop:1}-\ref{prop:3} to assess the convergence of the approximate PG algorithm to the solution of LASSO problem under hardware and software errors as explained in the following section.
\newpage
\subsection{Synthetic LASSO}
\subsubsection{Experimental Setup}
We now apply the proposed bounds to analyze the convergence of the approximate
\textit{proximal gradient} algorithm when applied to solve randomly generated
LASSO problems:
\begin{equation*}
  \underset{x \in \mathbb{R}^n}{\text{minimize}}
  \,\,\,
  \frac{1}{2}\|Ax - y\|_2^2+\lambda \|x\|_1\,,
\end{equation*}
%, i.e., $\underset{x \in \mathbb{R}^n}{\text{minimize}} \|Ax - y\|_2^2+\lambda \|x\|_1$
where $n = 100$ (dimension of $x$) and $A \in \mathbb{R}^{m \times n}$ has
$m=500$ rows. We run a total of $5$ random experiments for every algorithm
parameter selection. We mainly vary the bitwidth, the fraction width of the
fixed-point representation %in \eqref{Eq:Quantaz2},
the CVX~\cite{Grant11-CVX} solver's precision
to approximate the proximal step \eqref{Eq:PGDApproximateRepeated}, and the
tolerance bound of the approximate proximal gradient (abstol). We record
and take the average over all $5$ experiments of the residual error in the
iterates $\|x-x^\star\|_2$, the error in the
function values $f-f^\star$, and the total number of iterations $k$. 
\subsubsection{Results}
\autoref{Figure6} to \autoref{Figure8} show the proposed convergence bounds in dashed (black) and continuous (black and gray) lines, the
error-free optimal bound (in green) as well as the original bound (red) in \eqref{schmidt1} for
the different tests. The parameter $\gamma$ is designed to generate $3$ probabilistic bounds which hold
with decreasing probabilities $1$, $0.5$ and $0.25$, respectively.  
\begin{figure}[H]
\centering
\captionsetup{justification=centering}
\label{fig:fig1}
\includegraphics[width=10cm]{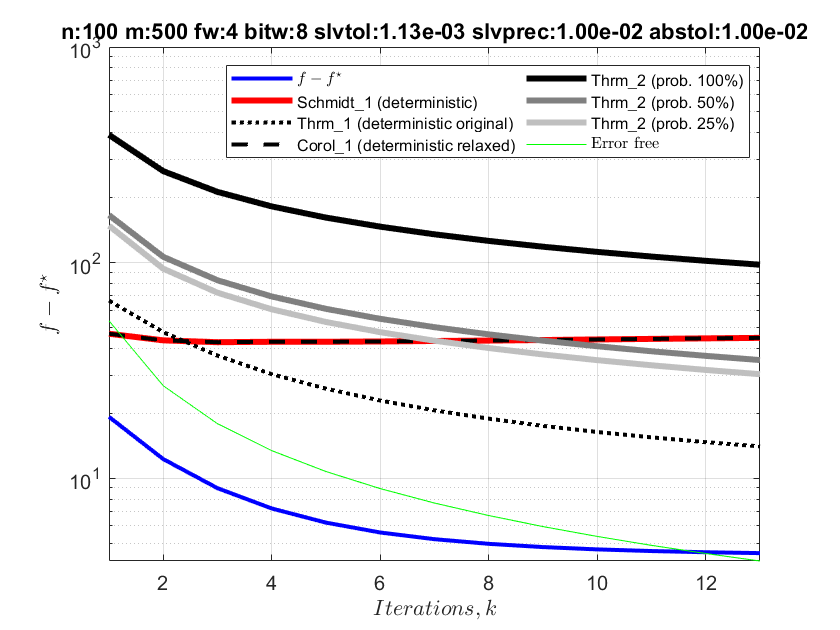}
\caption{Upper bounds based on Theorems \ref{prop:1} \& \ref{prop:3} and their corresponding corollaries vs \eqref{schmidt1}. slvprec: CVX solver's precision, bitw: bitwidth, fw: fraction width, \\abstol: overall tolerance of the PG algorithm.\label{Figure6}}
%(with $\delta = 2.2\times 10^{-4}; \epsilon_0 = 10^{-4}$)}
\end{figure}
\begin{figure}[H]
\centering
\captionsetup{justification=centering}
\label{fig:fig2}
\includegraphics[width=10cm]{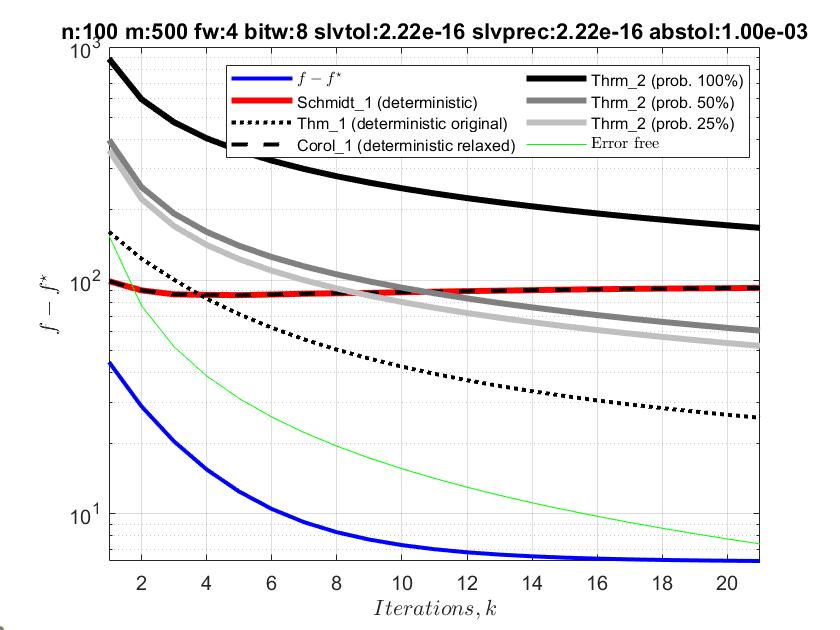}
\caption{Upper bounds based on Theorems \ref{prop:1} \& \ref{prop:3} and their corresponding corollaries vs \eqref{schmidt1}. slvprec: CVX solver's precision, bitw: bitwidth, fw: fraction width, \\abstol: overall tolerance of the PG algorithm.\label{Figure7}}
%(with $\delta = 2.2\times 10^{-4}; \epsilon_0 = 10^{-4}$)}
\end{figure}
\begin{figure}[H]
\centering
\captionsetup{justification=centering}
\includegraphics[width=10cm]{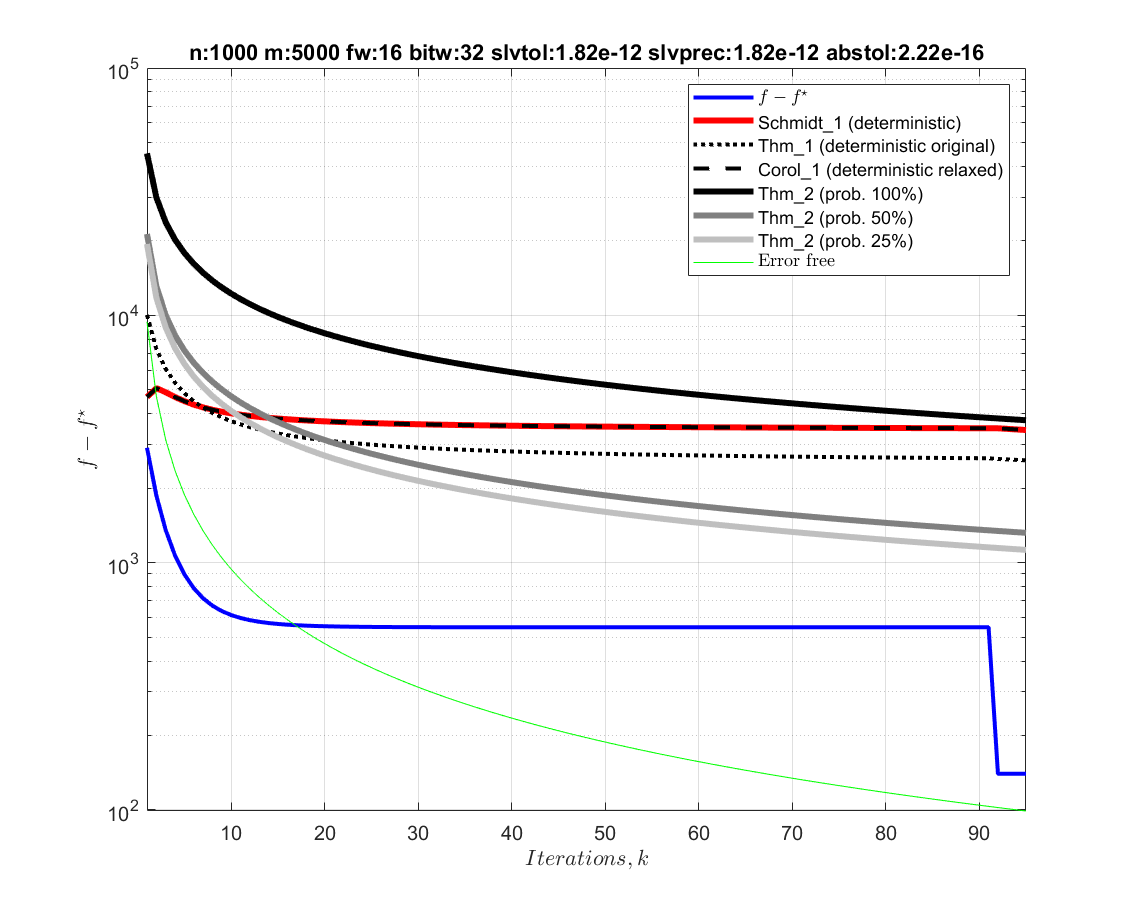}
\caption{Upper bounds based on Theorems \ref{prop:1} \& \ref{prop:3} and their corresponding corollaries vs \eqref{schmidt1}. slvprec: CVX solver's precision, bitw: bitwidth, fw: fraction width, \\abstol: overall tolerance of the PG algorithm.\label{Figure8}}
%(with $\delta = 2.2\times 10^{-4}; \epsilon_0 = 10^{-4}$)}
\end{figure}
% \begin{table}
% \caption{Results of our experiments. }
% \caption*{PRECISION: CVX solver's precision, BIT: bitwidth, FRAC.: fraction width, \\ABSTOL: overall tolerance of the PG algorithm, $k,iters$: total number of iterations}
% \label{tab:table1}
% \centering
% %\resizebox{\columnwidth}{!}{%
% \begin{tabular}{lllllllll}
%  PRECISION & BIT & FRAC. & ABSTOL & $\|x-x^\star\|_2$ & $f-f^\star$ & $k, iters.$ \\ 
% \hline 
%  2.22e-16 & 8 & 4 & 2.22e-16 & 0.052609 & 1.3818 & 85 \\ 
%   &  &  & 0.001 & 0.15256 & 6.2202 & 20 \\ 
%   & 16 & 6 & 2.22e-16 & 0.13947 & 5.1224 & 79 \\ 
%   &  &  & 0.01 & 0.1418 & 5.6894 & 15 \\ 
%   &  & 8 & 2.22e-16 & 0.09152 & 3.1661 & 79 \\ 
%   0.001 & 8 & 4 & 0.001 & 0.14508 & 5.555 & 20 \\ 
%   &  &  & 0.01 & 0.14271 & 5.6136 & 14 \\ 
%   & 16 & 8 & 2.22e-16 & 0.096077 & 4.1512 & 84 \\ 
%   &  &  & 0.001 & 0.13155 & 4.5246 & 18 \\ 
%   0.01 & 8 & 4 & 0.01 & 0.12847 & 4.4239 & 14 \\ 
%   & 16 & 6 & 0.01 & 0.13084 & 4.3855 & 14 \\ 
%   &  & 8 & 0.01 & 0.15369 & 5.8671 & 15\\
% \hline 
% \end{tabular}
% %}
% \end{table}
%Plots
\newpage
Overall, the proposed bounds of Theorems \ref{prop:1}-\ref{prop:3} give better approximations of the discrepancy caused by perturbations and they are more efficient asymptotically. Although the new deterministic bounds achieve better approximations (i.e., with smaller error terms) in \autoref{Figure6} and \autoref{Figure7}, probabilistic bounds are found to be more efficient in higher dimensions as can be seen from \autoref{Figure8}, where the LASSO problem was designed to have $n=1000$ variables and $m=5000$ examples and solved with lower machine epsilon of $2^{-16}$ and reduced solver tolerances.
% From Figures 6-8, we can clearly see that our proposed bounds give better
% approximations of the discrepancy caused by perturbations, and consequently we
% obtain better error terms.

As a necessary condition for convergence, we only
require the partial sums $\sum_{i=1}^{k}\epsilon_2^{i}$ and
$\sum_{i=1}^{k}\norm{\epsilon_1^{i}}_2$ to be in $o(k)$, in contrast to the
stronger condition $o(\sqrt{k})$ of \eqref{schmidt1}. For the probabilistic
bounds, we do not assume summability of the error terms but only require them
to be bounded. Consequently, the probabilistic bounds achieve better
approximations over iterations, they are less sensitive to error variations and
become tighter with decreasing  probability. If we relax our original bound of
Theorem \ref{prop:1} and use Lemma \ref{Lem:BackwardProxError} to bound the
sequence of the proximal residual error $\{r^k\}$, then our bound coincides
with the one in \eqref{schmidt1}, as depicted by the overlapping dashed and red
lines in \autoref{Figure6} to \autoref{Figure8}.

% %ABSTOL
% Increasing the tolerance bound of the approximate proximal gradient from $2.22 \times 10^{-16}$ to $10^{-3}$ improved the algorithm's running time by $65$ iterations for the $8$ bits representation and $66$ iterations for the $16$ bits representation without affecting too much the residuals.

% %solver precision
% Table~\ref{tab:table1} shows that, in general, varying the internal loop (CVX solver's) precision does not largely
% affect the number of outer iterations of the proximal gradient, but leads to substantial bias around the optimum when increased from $0.01$ by a factor of $10$. 

% %Hardware precision
% Reducing the hardware precision from $16$ to $8$ bits accelerated the algorithm
% by $6$ iterations, but slightly increased the residual in the solution $\big\|x-x^\star\big\|_2$ by $8.6861 \times 10^{-2}$  while added $3.7406$ extra bias error to the function value. Increasing the hardware precision by allocating more bits for the fractional part in the fixed-point representation of $16$ bits caused the residual error in $\big\|x-x^\star\big\|_2$ to drop by $17.46\%$ and the error in $\big\|f-f^\star\big\|_2$ by $33.78\%$ without remarkable effect on the number of iterations.
\section{Conclusions}
\label{Sec:Conclusion}
We assessed the convergence of the approximate proximal gradient algorithm in the presence of gradient
 and proximal computational
inaccuracies. We presented new tighter bounds and used them to verify a simulated (MPC) and a synthetic (LASSO) optimization examples solved on a
reduced-precision machine combined with reduced-precision solver. Following a probabilistic approach, we introduced new, and more robust, probabilistic upper bounds that can be used to verify the application before any interaction with the computing machine. Theoretically, we have also shown that some cumulative error terms follow a martingale property assuming error mean independence and data mean independence. In future works, we will try and relax the error assumptions to incorporate more general and more realistic perturbations into the analysis. We have also shown, in conformity with previous observations \cite{schmidt2011convergence}, that the standard momentum-based acceleration scheme has the potential drawback of noise amplification which can result in non-summable error terms; therefore, further work will need to be devoted to mitigate acceleration-error attenuation trade-off. 
% While these are worst-case performance bounds, we also obtained probabilistic upper bounds catering for random computational errors. We have also shown that some error terms follow martingale sequences when error mean independence and data mean independence assumptions both hold. In future works we will try and relax the latter to incorporate more general and more realistic perturbations into the analysis. Convergence in terms of the perturbed iterates will be established in future work for the approximate accelerated proximal gradient algorithm. Further work will be devoted to find an optimal parameter sequence selection that would result in the best acceleration-error attenuation trade-off. Error compensation mechanisms and error decay control designs is also a subject of future interests. 
\appendix
\section{Fixed-point representation}
\label{AppendixFixedPoint}
Let $\mathcal{F_{\text{u}}} \subseteq \mathbb{R}$ denote the unsigned fixed-point number system. A real number $x \not\in \mathcal{F_{\text{u}}}$ is rounded to an unsigned fixed-point number, i.e.,  $uI.F(x) \in \mathcal{F_{\text{u}}}$ as
\begin{equation}
    uI.F(x) = \sum_{i=0}^{W-1}b_i(x)2^{i-F},
\end{equation}
with $F, W\in \mathbb{N}_+$ and $F < W$. The corresponding dynamic range (DR) is given by 
\begin{equation}
    DR_{uI.F} = [0, \quad2^I-2^{-F}],
\end{equation}
where $I = W-F$.
The signed fixed-point representation I.F, or $sI.F(x) \in \mathcal{F_{\text{s}}}$ where $\mathcal{F_{\text{s}}} \subseteq \mathbb{R}$ can be obtained from $x\not\in \mathcal{F_{\text{s}}}$ by encoding the sign
of $x$ using one bit and this is typically done by taking the most
significant bit (MSB) of the integer part $I$. Although
this operation reduces the number of bits of the integer part from $I$ to $I-1$, the
two's complement approach handles negative numbers and therefore extends the DR
in the negative direction, i.e.,  $DR_{sI.F} = [-2^{I-1};2^{I-1}-2^{-F}]$, and the quantized value of x is now given by   
\begin{equation}
    sI.F(x) = \sum_{i=0}^{W-2}b_i2^{i-F} - b_{W-1}2^{I-1}.
    \label{Eq:Quantaz2}
\end{equation}
\section{Floating-point representation}

Let $\mathcal{F} \subseteq \mathbb{R}$ denote the floating-point number system. A real number \\$x=\pm(.d_1d_2d_3 \dots d_{t}d_{t+1}d_{t+2} \dots)\times \beta^e \in \mathbb{R}$ with $x \not\in \mathcal{F}$ is rounded to  $fl(x) \in \mathcal{F}$ as
\begin{equation}
    fl(x) = \pm m \times \beta^{e-t},
\end{equation}
with base $\beta$, precision $t$ and exponent $e$ satisfying $e_{\min} \leq e \leq e_{\max}$. $m$ is the \textit{mantissa} or also known as the \textit{significand} and satisfies $0 \leq m \leq \beta^t-1$. Numbers for which $m \geq \beta^{t-1}$ are called \textit{normalised numbers}. The following holds for \textit{round-to-nearest},
\begin{equation}
    \label{Eq:FpErrorBound}
    fl(x) = x(1+\epsilon), \quad |\epsilon| \leq \delta.
\end{equation}
\label{AppendixFloatingPoint}
\section{Some known results}
\renewcommand{\thesubsection}{\Alph{subsection}}
\begin{Theorem} \label{thm:1} %external 
Given a differentiable convex function $g: R^d \to R$ whose gradient is $L$-Lipschitz continuous and a convex proper (possibly non-smooth) function $h: R^d \to R \cup \{+\infty\}$, we define:
\begin{equation}
G(x,y) =  g(y) + \nabla g(y)^\top (x-y)+\frac{L}{2}\norm{x-y}_{2}^2+h(x)
\label{G1}
\end{equation}
for any $x \in \mathbb{R}^n$ and any $y \in dom(h)$. Then, for any fixed x,  $G(x, \cdot)$ is $L$-strongly convex; and for any fixed y, $G(\cdot, y)$ is $L$-strongly convex.
\end{Theorem}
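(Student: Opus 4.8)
The plan is to treat the two assertions separately, as they are of quite different difficulty, and to work throughout with the following characterization: a function $\phi:\mathbb{R}^n\to\mathbb{R}\cup\{+\infty\}$ is $\mu$-strongly convex if and only if $\phi-\frac{\mu}{2}\norm{\cdot}_2^2$ is convex; since convexity is invariant under adding an affine function, this is equivalent to $\phi-\frac{\mu}{2}\norm{\cdot-c}_2^2$ being convex for any fixed $c$. Each assertion then reduces to checking convexity of an explicit function, which is the route I would follow.

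For fixed $y$, write $G(\cdot,y)=p(\cdot)+\frac{L}{2}\norm{\cdot-y}_2^2$ with $p(x):=g(y)+\nabla g(y)^\top(x-y)+h(x)$. Since $y$ is held constant, the first two terms of $p$ are affine in $x$, and adding the convex $h$ keeps $p$ convex; the term $\frac{L}{2}\norm{x-y}_2^2$ is $L$-strongly convex (subtracting $\frac{L}{2}\norm{x}_2^2$ leaves an affine function of $x$). A convex function plus an $L$-strongly convex function is $L$-strongly convex, which settles this case. I would also note that the identical argument with $\frac{L}{2}$ replaced by $\frac{1}{2s}$, $s\le 1/L$, gives the $(1/s)$-strong convexity of $G(\cdot,x^k)$ used in Section~\ref{Sec:Proofs}; this is the property the remainder of the paper relies on.

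The claim for fixed $x$ is the step I expect to be the genuine obstacle. Expanding $\frac{L}{2}\norm{x-y}_2^2=\frac{L}{2}\norm{x}_2^2-L\,x^\top y+\frac{L}{2}\norm{y}_2^2$ and discarding the terms constant or affine in $y$, the assertion reduces to convexity of $y\mapsto g(y)+\nabla g(y)^\top(x-y)$. The natural tool is $L$-smoothness of $g$: since $g$ is convex with $L$-Lipschitz gradient, $\frac{L}{2}\norm{\cdot}_2^2-g$ is convex and $\nabla g$ is co-coercive, and one would try to combine these with the convexity of $g$ to control the cross term $-\nabla g(y)^\top y$. I anticipate a direct attempt stalling here, since the curvature contributed by $-\nabla g(y)^\top y$ is governed by the Hessian of $g$ applied to $y$ and is not sign-definite; in this generality the second assertion appears to require an additional structural hypothesis on $g$ (e.g.\ $\nabla g$ affine). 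I would therefore foreground the first assertion as the substantive content, state plainly that it is the one invoked later, and either restrict the second assertion to a setting where it holds or flag it as needing this care.
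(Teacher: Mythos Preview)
Your analysis is accurate and in fact goes beyond what the paper provides: the paper's own ``proof'' of this statement is simply a citation to \cite{beck2017first} with no argument, so there is no in-paper proof to compare against.

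Your treatment of the first assertion (strong convexity of $G(\cdot,y)$ for fixed $y$) is correct and is exactly what the rest of the paper uses---see the invocation of Theorem~\ref{thm:2} at~\eqref{Eq:ProofTheorem1-ApplicationOfStrongConv}, where only strong convexity in the first argument is needed.

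Your skepticism about the second assertion is well founded; in fact it is false as stated, and your instinct that the cross term $-\nabla g(y)^\top y$ is the culprit is exactly right. A clean counterexample: take $h\equiv 0$ and $g(y)=\tfrac{L}{2}\norm{y}_2^2$, so that $\nabla g(y)=Ly$. Then
\[
G(x,y)=\tfrac{L}{2}\norm{y}_2^2+Ly^\top(x-y)+\tfrac{L}{2}\norm{x-y}_2^2=\tfrac{L}{2}\norm{x}_2^2,
\]
which is constant in $y$ and hence only $0$-strongly convex. So the second claim cannot hold with modulus $L$ for every convex $L$-smooth $g$; the curvature of $g$ exactly cancels the quadratic term. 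Your recommendation---foreground the first assertion, which is the one actually used downstream, and flag or restrict the second---is the right call.
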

\begin{proof}
See \cite{beck2017first}.
\end{proof}
\medskip
\begin{Theorem}\label{thm:2} %external
  Let $g$: $E \rightarrow (-\infty, \infty]$ be a proper, closed, and
$c$-strongly convex function, where $c > 0$. Then,
\begin{itemize}
  \item $g$ has a unique minimizer $x^\star$; and
  \item $g(x)-g(x^\star) \geq \frac{c}{2}\|x-x^\star\|_2^2$, for all $x \in
    E$. 
\end{itemize}
\end{Theorem}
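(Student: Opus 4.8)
The plan is to prove the two assertions in sequence, using only the definition of $c$-strong convexity, namely that for all $x,y\in E$ and $\lambda\in[0,1]$,
\[
g(\lambda x+(1-\lambda)y)\leq \lambda g(x)+(1-\lambda)g(y)-\frac{c}{2}\lambda(1-\lambda)\norm{x-y}_2^2,
\]
together with properness and closedness (lower semicontinuity of $g$, i.e.\ closedness of its epigraph).

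\textbf{Existence of a minimizer.} First I would fix any $x_0\in\operatorname{dom} g$, which exists since $g$ is proper, and apply the displayed inequality with $y=x_0$ to obtain a quadratic growth estimate of the form $g(x)\geq a+b\,\norm{x-x_0}_2+\frac{c}{2}\norm{x-x_0}_2^2$ for constants $a,b$ depending only on $x_0$; in particular $g$ is coercive, so every sublevel set $\{x\in E:\ g(x)\leq\alpha\}$ is bounded. Choosing $\alpha$ so that this sublevel set is nonempty, lower semicontinuity makes it closed, hence compact, and the Weierstrass extreme value theorem (again via lower semicontinuity) then guarantees that $g$ attains its infimum over that sublevel set, and therefore over all of $E$, at some point $x^\star$.

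\textbf{The growth bound and uniqueness, together.} I would deduce both remaining statements from a single use of strong convexity with $y=x^\star$. For arbitrary $x\in E$ and $\lambda\in(0,1]$, minimality of $x^\star$ gives $g(x^\star)\leq g(\lambda x+(1-\lambda)x^\star)$; combining this with the strong convexity inequality, cancelling the $g(x^\star)$ terms, and dividing by $\lambda>0$ yields
\[
g(x^\star)\leq g(x)-\frac{c}{2}(1-\lambda)\norm{x-x^\star}_2^2.
\]
Letting $\lambda\to 0^+$ gives $g(x)-g(x^\star)\geq \tfrac{c}{2}\norm{x-x^\star}_2^2$ for all $x\in E$, which is the second claim. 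Uniqueness is then immediate: if $x^{\star\star}$ were another minimizer, the bound forces $\tfrac{c}{2}\norm{x^{\star\star}-x^\star}_2^2\leq g(x^{\star\star})-g(x^\star)=0$, so $x^{\star\star}=x^\star$ because $c>0$.

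\textbf{Main obstacle.} The only delicate point is the existence step: one must use ``closed'' in the lower-semicontinuity sense and correctly combine coercivity with compactness of a sublevel set to invoke the extreme value theorem. Everything after existence is elementary algebra. (Alternatively, once $x^\star$ is known to exist, the growth bound follows at once from $0\in\partial g(x^\star)$ and the strong-convexity subgradient inequality, but the limiting argument above is self-contained and avoids subdifferential calculus.)
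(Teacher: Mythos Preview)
Your argument is correct, and there is nothing in the paper to compare against: the paper does not prove this result but simply cites \cite[Thm.\ 5.25]{beck2017first}. Your route---coercivity plus Weierstrass for existence, then the $\lambda\to 0^+$ limiting trick for the quadratic growth bound, from which uniqueness is immediate---is exactly the standard argument one finds in that reference.

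One small remark on the existence step: the specific lower bound $g(x)\geq a+b\,\norm{x-x_0}_2+\tfrac{c}{2}\norm{x-x_0}_2^2$ does not follow from the strong-convexity inequality with $y=x_0$ alone; it implicitly uses that the proper closed convex function $g(\cdot)-\tfrac{c}{2}\norm{\cdot}_2^2$ admits an affine minorant, which is essentially the same depth as the subgradient inequality you say you want to avoid. If you prefer a genuinely elementary coercivity argument, you can show directly that $\{g\le g(x_0)\}$ is bounded: given $x_n$ in this set with $\norm{x_n-x_0}\to\infty$, set $\lambda_n=1/\norm{x_n-x_0}$ so that $z_n:=\lambda_n x_n+(1-\lambda_n)x_0$ lies on the unit sphere about $x_0$; the strong-convexity inequality then gives $g(z_n)\le g(x_0)-\tfrac{c}{2}(1-\lambda_n)\norm{x_n-x_0}\to-\infty$, contradicting that a lower-semicontinuous $(-\infty,\infty]$-valued function is bounded below on compact sets. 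Everything after existence is, as you note, clean.
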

\begin{proof}
See \cite[Thm. \ 5.25]{beck2017first}.
\end{proof}
\medskip
\begin{Theorem}[\textbf{Fejer monotonicity of the sequence generated by the
proximal gradient method}] \label{Thm:Fejer} %external
Let $\{x^k\}_{k\geq0}$ be the sequence generated by the proximal gradient
method for solving problem \eqref{Eq:ProblemRepeated}. Then for any $x^\star \in X^\star$ and $k\geq0$,
\begin{equation}
    \norm{x^{k+1} - x^\star}_2 \leq \norm{x^k - x^\star}_2
\end{equation}
\begin{equation}
    \norm{x^\star-x^i}_2 \leq \norm{x^\star-x^{i-1}}_2 \leq ... \leq \norm{x^\star-x^\star}_{2}
    \label{fejer}
\end{equation}
\end{Theorem}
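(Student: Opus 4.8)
The plan is to reduce the chained inequality~\eqref{fejer} to a single one-step contraction estimate and then iterate it. The cleanest route is to specialize the computation already carried out in the proof of Theorem~\ref{prop:1} (Section~\ref{Subsec:Proofprop1}) to the error-free setting, $\epsilon_1^k = 0$ and $\epsilon_2^k = 0$ for all $k$. In that case the $\epsilon$-suboptimal proximal set~\eqref{Eq:ProxApproximate} is the set of minimizers of a $1/s$-strongly convex function, hence a singleton, so the iterate $x^{k+1}$ produced by the exact proximal gradient method coincides with the noiseless iterate $\overline{x}^{k+1}$, and the residual vanishes, $r^{k+1} = 0$.

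First I would invoke inequality~\eqref{Eq:ProofTheorem1-LastStepBeforeSumming} with $\epsilon_1^k = 0$, $\epsilon_2^k = 0$, and $\overline{x}^{k+1} = x^{k+1}$; it collapses to
\begin{equation*}
  f(x^{k+1}) - f(x^\star)
  \leq
  \frac{1}{2s}\norm{x^\star - x^k}_2^2 - \frac{1}{2s}\norm{x^\star - x^{k+1}}_2^2 .
\end{equation*}
Since $x^\star \in X^\star$ minimizes $f$ globally, the left-hand side is nonnegative, so $\norm{x^\star - x^{k+1}}_2^2 \leq \norm{x^\star - x^k}_2^2$, which is the first display of the theorem. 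The chain~\eqref{fejer}, i.e.\ the statement that $\{\norm{x^\star - x^k}_2\}_{k\geq 0}$ is non-increasing, then follows by applying this one-step bound successively from index $i$ down to the initialization.

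If one prefers a self-contained derivation rather than quoting the proof of Theorem~\ref{prop:1}, the same one-step inequality is obtained directly along the lines of~\eqref{27}--\eqref{Eq:ProofTheorem1-LastStepBeforeSumming}: write $x^{k+1} = \overline{x}^{k+1}$ as the minimizer of the $1/s$-strongly convex surrogate $G(\cdot, x^k)$ of~\eqref{G0} (now with $\nabla^{\epsilon_1^k}g = \nabla g$), apply Theorem~\ref{thm:2} to get $G(x^\star, x^k) - G(x^{k+1}, x^k) \geq \frac{1}{2s}\norm{x^\star - x^{k+1}}_2^2$, use the descent lemma~\eqref{Eq:LipschitzContinuityAlt} with $x = x^k$, $y = x^{k+1}$ (this is the point where the stepsize restriction $s \leq 1/L$ enters), and finally the convexity of $g$ at $x^k$ to replace $g(x^k) + \nabla g(x^k)^\top(x^\star - x^k)$ by its lower bound $g(x^\star)$; then expand $\norm{x^\star - x^{k+1}}_2^2 = \norm{x^\star - x^k}_2^2 + 2(x^k - x^{k+1})^\top(x^\star - x^k) + \norm{x^k - x^{k+1}}_2^2$ and regroup. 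I expect the only step with genuine content to be this one-step estimate, and within it the only delicate point to be the correct use of the optimality characterization of the exact proximal map together with the place where $s \leq 1/L$ is invoked; the remaining manipulations are the telescoping bookkeeping already performed in Section~\ref{Subsec:Proofprop1}, and the passage to~\eqref{fejer} is then immediate.
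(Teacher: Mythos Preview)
Your argument is correct. The paper itself does not actually prove this theorem: it is placed in the appendix under ``Some known results'' and the entire proof reads ``See \cite[Thm.\ 10.23]{beck2017first}.'' So there is no in-paper derivation to compare against.

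That said, your route is a natural and economical one \emph{within this paper}: rather than reproducing the standard textbook argument based on firm nonexpansiveness of the proximal operator and cocoercivity of $\nabla g$, you specialize the one-step inequality~\eqref{Eq:ProofTheorem1-LastStepBeforeSumming} from the proof of Theorem~\ref{prop:1} to the exact case $\epsilon_1^k=0$, $\epsilon_2^k=0$, $\overline{x}^{k+1}=x^{k+1}$, and then use $f(x^{k+1})\geq f(x^\star)$ to conclude. This has the advantage of being entirely self-contained with machinery the paper has already developed, and it makes transparent where the stepsize restriction $s\leq 1/L$ enters (through~\eqref{Eq:LipschitzContinuityAlt}). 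The textbook proof in \cite{beck2017first} instead uses the operator-theoretic viewpoint (nonexpansiveness of $\text{prox}_{sh}\circ(\mathbf{I}-s\nabla g)$), which is more modular but requires importing those facts separately.

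One minor remark: the chain~\eqref{fejer} as written in the statement terminates at $\norm{x^\star-x^\star}_2$, which is evidently a typo for $\norm{x^\star-x^0}_2$; your interpretation (``$\{\norm{x^\star-x^k}_2\}$ is non-increasing'') is the intended one.
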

\begin{proof} See \cite[Thm.\ 10.23]{beck2017first}\end{proof}
\medskip
\begin{Definition}[\textbf{Quasi-F\'ejer monotonicity of a sequence \cite[Def.\ 1.1]{combettes2001quasi}}] \label{Def:QuasiFejer} %external
Relative to a nonempty target set $X^\star \in \mathbb{R}^n$, a sequence $\{x^k\}_{k \geq 0} \in \mathbb{R}^n$ is quasi-F\'ejer if, for any $k \geq 0$, the following inequality holds
\begin{equation}
    \norm{x^{k+1}-x^\star}_2 \leq \norm{x^{k}-x^\star}_2 + \varepsilon^k,
    \label{Eq:QuasiFejerDef}
\end{equation}
where $\{\varepsilon^k\}_{k \geq 0}$ is a positive absolutely summable sequence.
\end{Definition}
\medskip
\begin{Lemma}[\textbf{Azuma-Hoeffding inequality}] \label{lem:3} %external
Let $E_1,\dots, E_n$ be a martingale such that $|E_k - E_{k-1}| \leq c_k$ almost surely, for $k = 2, \dots ,n$. Then for any $\gamma > 0$,
\begin{equation}
    \text{Pr}\bigg(|E_k-E_0|> \gamma\sqrt{\sum_{i=1}^kc_k^2}\bigg)\leq 2\exp(-\frac{\gamma^2}{2}).
\end{equation}
\end{Lemma}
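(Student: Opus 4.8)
The plan is to prove this by the classical exponential-moment (Chernoff) argument for martingale differences. First I would reduce to a one-sided estimate: since $\{-E_k\}$ is also a martingale with $|(-E_k)-(-E_{k-1})|\le c_k$, it suffices to establish
\[
  \mathrm{Pr}\!\left(E_k-E_0 > \gamma\sqrt{\textstyle\sum_{i=1}^{k}c_i^2}\right)\le \exp(-\gamma^2/2),
\]
and then apply this bound to both $E_k$ and $-E_k$ and take a union bound, which produces the stated two-sided inequality with prefactor $2$.

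For the one-sided estimate, set $D_i := E_i - E_{i-1}$, so that $E_k-E_0 = \sum_{i=1}^{k}D_i$, and let $\mathcal{F}_{i-1} := \sigma(E_0,\dots,E_{i-1})$; the martingale property gives $\mathbb{E}[D_i \mid \mathcal{F}_{i-1}]=0$, and by hypothesis $|D_i|\le c_i$ almost surely. For any $\lambda>0$, Markov's inequality yields $\mathrm{Pr}(E_k-E_0>t)\le e^{-\lambda t}\,\mathbb{E}[e^{\lambda(E_k-E_0)}]$, so the task becomes bounding the moment generating function. I would do this by peeling off one increment at a time,
\[
  \mathbb{E}\big[e^{\lambda(E_k-E_0)}\big]
  = \mathbb{E}\big[e^{\lambda(E_{k-1}-E_0)}\,\mathbb{E}[e^{\lambda D_k}\mid \mathcal{F}_{k-1}]\big],
\]
and then invoking Hoeffding's lemma: a zero-mean random variable $X\in[a,b]$ satisfies $\mathbb{E}[e^{\lambda X}]\le e^{\lambda^2(b-a)^2/8}$. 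Applied conditionally with $X=D_k$, $a=-c_k$, $b=c_k$, this gives $\mathbb{E}[e^{\lambda D_k}\mid \mathcal{F}_{k-1}]\le e^{\lambda^2 c_k^2/2}$ almost surely; iterating down to $i=1$ produces $\mathbb{E}[e^{\lambda(E_k-E_0)}]\le \exp\!\big(\tfrac{\lambda^2}{2}\sum_{i=1}^{k}c_i^2\big)$.

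Combining the last two displays gives $\mathrm{Pr}(E_k-E_0>t)\le \exp\!\big(-\lambda t + \tfrac{\lambda^2}{2}\sum_{i=1}^{k}c_i^2\big)$ for every $\lambda>0$; minimizing the exponent over $\lambda$ (the minimizer is $\lambda = t/\sum_{i=1}^{k}c_i^2$) gives $\mathrm{Pr}(E_k-E_0>t)\le \exp\!\big(-t^2/(2\sum_{i=1}^{k}c_i^2)\big)$. Taking $t=\gamma\sqrt{\sum_{i=1}^{k}c_i^2}$ yields precisely $\exp(-\gamma^2/2)$, and the symmetry argument of the first paragraph upgrades this to the two-sided statement.

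I expect the only nonroutine ingredient to be Hoeffding's lemma itself — the bound $\mathbb{E}[e^{\lambda X}]\le e^{\lambda^2(b-a)^2/8}$ for a bounded, mean-zero $X$. This is proved by bounding the convex function $x\mapsto e^{\lambda x}$ on $[a,b]$ by the secant line through its endpoints, taking expectations and using $\mathbb{E}[X]=0$ to reduce to $e^{\psi(u)}$ with $\psi(u)=-pu+\log(1-p+pe^{u})$, $p=-a/(b-a)$, $u=\lambda(b-a)$, and then checking $\psi(0)=\psi'(0)=0$ and $\psi''\le 1/4$, so that Taylor's theorem gives $\psi(u)\le u^2/8$. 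Everything else — the tower-property manipulations and the Chernoff optimization over $\lambda$ — is standard bookkeeping, so alternatively one may simply cite a standard reference (e.g.\ \cite{beck2017first}-style probability texts) for this well-known inequality.
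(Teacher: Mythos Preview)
Your argument is the standard Chernoff--Hoeffding proof of the Azuma inequality and is correct as written. The paper itself does not prove this lemma at all but simply cites \cite[p.\ 36]{wainwright2019high}, so your self-contained derivation (reduction to one side, Hoeffding's lemma on the conditional increments, optimization over $\lambda$) is exactly the textbook argument behind that citation and in fact goes beyond what the paper provides.
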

\begin{proof}
See \cite[p.\ 36]{wainwright2019high}
\end{proof}
\medskip
\begin{Lemma}[\textbf{Hoeffding bound}] \label{lem:4} %external
Suppose that the random variables $X_i$, $i= 1,\dots,n$ are
independent, and $X_i$ has mean $\mu_i$ and sub-Gaussian parameter $\sigma_i$. If we define $S = \sum_{i=1}^{k}X_i$ then for all $t \geq 0$, we have
\begin{equation}
    \text{Pr}\bigg(|S - \mathbb{E}\big[S\big]|> t\bigg)\leq 2\textup{exp}\bigg(\frac{-t^2}{2\sum_{i=1}^{k}\sigma_i^2}\bigg).    
\end{equation}
In particular, if $X_i \in [a, b]$ for all $i = 1, 2,\dots,n$, then
\begin{equation} 
    \text{Pr}\bigg(|S - \mathbb{E}\big[S\big]|\geq t\bigg)\leq 2\textup{exp}\bigg(\frac{-2t^2}{k(b-a)^2}\bigg).    
\end{equation}
\end{Lemma}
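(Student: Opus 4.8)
The plan is to prove both inequalities by the standard Chernoff--Cram\'er exponential-moment method, reducing everything to the sub-Gaussian tail bound for a single variable together with its closure under independent summation.

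First I would recall the defining property: a random variable $X_i$ with mean $\mu_i$ and sub-Gaussian parameter $\sigma_i$ satisfies $\mathbb{E}\big[e^{\lambda(X_i-\mu_i)}\big] \leq e^{\lambda^2\sigma_i^2/2}$ for every $\lambda \in \mathbb{R}$. Since the $X_i$ are independent, so are the centered variables $X_i - \mu_i$, and the moment generating function of $S - \mathbb{E}[S] = \sum_{i=1}^k (X_i-\mu_i)$ factorizes: $\mathbb{E}\big[e^{\lambda(S-\mathbb{E}[S])}\big] = \prod_{i=1}^k \mathbb{E}\big[e^{\lambda(X_i-\mu_i)}\big] \leq \exp\!\big(\tfrac{\lambda^2}{2}\sum_{i=1}^k\sigma_i^2\big)$. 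Hence $S - \mathbb{E}[S]$ is itself sub-Gaussian with parameter $\sigma := \big(\sum_{i=1}^k \sigma_i^2\big)^{1/2}$.

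Next, for the upper tail I would apply Markov's inequality to $e^{\lambda(S-\mathbb{E}[S])}$ with $\lambda > 0$: $\text{Pr}\big(S-\mathbb{E}[S] \geq t\big) \leq e^{-\lambda t}\,\mathbb{E}\big[e^{\lambda(S-\mathbb{E}[S])}\big] \leq e^{-\lambda t + \lambda^2\sigma^2/2}$. Minimizing the exponent over $\lambda > 0$ gives $\lambda^\star = t/\sigma^2$ and the bound $e^{-t^2/(2\sigma^2)}$. Running the same argument on $-(S-\mathbb{E}[S])$ controls the lower tail, and a union bound yields $\text{Pr}\big(|S-\mathbb{E}[S]| > t\big) \leq 2e^{-t^2/(2\sigma^2)}$, which is the first claim with $\sigma^2 = \sum_{i=1}^k\sigma_i^2$.

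For the bounded case the only extra ingredient is Hoeffding's lemma: if $X_i \in [a,b]$ almost surely then $X_i$ is sub-Gaussian with parameter $(b-a)/2$. I would prove this by studying $\psi_i(\lambda) := \log\mathbb{E}\big[e^{\lambda(X_i-\mu_i)}\big]$, noting $\psi_i(0) = 0$, $\psi_i'(0) = 0$, and $\psi_i''(\lambda) = \mathrm{Var}_{\lambda}(X_i)$, the variance of $X_i$ under the exponentially tilted measure $d\mathbb{P}_\lambda \propto e^{\lambda X_i}\,d\mathbb{P}$; this tilted law is supported in $[a,b]$, so by Popoviciu's inequality $\mathrm{Var}_\lambda(X_i) \leq (b-a)^2/4$ uniformly in $\lambda$, and a second-order Taylor expansion gives $\psi_i(\lambda) \leq \lambda^2(b-a)^2/8$. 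Substituting $\sigma_i = (b-a)/2$ into the first part yields $\sum_{i=1}^k\sigma_i^2 = k(b-a)^2/4$ and therefore $2\exp\!\big(-t^2/(2\cdot k(b-a)^2/4)\big) = 2\exp\!\big(-2t^2/(k(b-a)^2)\big)$, the second claim. The main obstacle is the variance-of-the-tilted-measure estimate inside Hoeffding's lemma; the rest is routine Chernoff bookkeeping.
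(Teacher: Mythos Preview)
Your argument is correct and is precisely the standard Chernoff--Cram\'er derivation of the Hoeffding bound. The paper does not supply its own proof of this lemma but simply cites Wainwright's high-dimensional statistics textbook, where essentially the same argument (sub-Gaussian MGF bound, closure under independent sums, Chernoff optimization, and Hoeffding's lemma for the bounded case) appears.
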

\begin{proof}
See \cite[p.\ 24]{wainwright2019high}.
\end{proof}
% \begin{Definition}\label{def:1}
% A sequence of random variables $T_0,T_1,\dots$ is a martingale with respect to the sequence $X_0,X_1,\dots$ if, for all $n\geq0$, the following conditions hold:
% \begin{itemize}
%     \item $T_k$ is a function of $X_0,X_1,\dots,X_k$;
%     \item $\mathbb{E}(|T_k|) < \infty$;
%     \item $\mathbb{E}(T_{k+1}|X_0,X_1,\dots,X_k) = T_k$.
% \end{itemize}
% A sequence of random variables $T_0,T_1,\dots$ is called a martingale when it is a martingale with respect to itself. That is, $\mathbb{E}(|T_k|) < \infty$, and $\mathbb{E}(T_{k+1}|T_0,T_1,\dots,T_k) = T_k$.
% \end{Definition}
\begin{Lemma}
\label{lem:sumprob}
Let $(\Omega, \mathcal{F}, Pr)$ be a probability space and $T_i, i=1,\dots, m$, events in $\mathcal{F}$. Let $t_i$ be some function of a scalar variable $\gamma$. If we have
\begin{equation} 
    \text{Pr}\bigg(T_i\geq t_i(\gamma)\bigg)\leq P_i(\gamma),    
\end{equation}
for all $i=1,\dots, m$, then the following holds
\begin{equation} 
    \text{Pr}\bigg(\cup_{i=1}^m T_i\geq t_i(\gamma)\bigg)\leq \sum_{i=1}^m P_i(\gamma).    
\end{equation}
Equivalently we have
\begin{equation} 
    \text{Pr}\bigg(\cup_{i=1}^m T_i\leq t_i(\gamma)\bigg)\geq 1-\sum_{i=1}^m P_i(\gamma).    
\end{equation}
\end{Lemma}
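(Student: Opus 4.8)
The plan is to treat this as the union bound (Boole's inequality) followed by complementation, so the argument is genuinely short. First I would fix the scalar $\gamma$ and, for each $i = 1, \dots, m$, introduce the event
\[
  A_i := \big\{\, T_i \geq t_i(\gamma) \,\big\} \in \mathcal{F},
\]
reading the symbol $T_i$ in ``$T_i \geq t_i(\gamma)$'' as a random variable (the lemma's phrasing ``events $T_i$'' is a mild abuse). The hypothesis then reads exactly $\text{Pr}(A_i) \leq P_i(\gamma)$ for all $i$, and the quantity to be bounded is $\text{Pr}\big(\bigcup_{i=1}^m A_i\big)$.

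Next I would invoke finite subadditivity of the probability measure, $\text{Pr}\big(\bigcup_{i=1}^m A_i\big) \leq \sum_{i=1}^m \text{Pr}(A_i)$, which follows by induction on $m$ from $\text{Pr}(A \cup B) = \text{Pr}(A) + \text{Pr}(B) - \text{Pr}(A \cap B) \leq \text{Pr}(A) + \text{Pr}(B)$, using only that $A \cap B \in \mathcal{F}$ has nonnegative measure. Chaining with the hypothesis gives
\[
  \text{Pr}\Big(\bigcup_{i=1}^m A_i\Big) \;\leq\; \sum_{i=1}^m \text{Pr}(A_i) \;\leq\; \sum_{i=1}^m P_i(\gamma),
\]
which is the first displayed inequality of the lemma.

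For the equivalent statement I would pass to the complement: by De Morgan, $\big(\bigcup_{i=1}^m A_i\big)^{c} = \bigcap_{i=1}^m A_i^{c}$, and since $A_i^{c} = \{T_i < t_i(\gamma)\} \subseteq \{T_i \leq t_i(\gamma)\}$, monotonicity of $\text{Pr}$ yields
\[
  \text{Pr}\Big(\bigcap_{i=1}^m \{T_i \leq t_i(\gamma)\}\Big) \;\geq\; \text{Pr}\Big(\bigcap_{i=1}^m A_i^{c}\Big) \;=\; 1 - \text{Pr}\Big(\bigcup_{i=1}^m A_i\Big) \;\geq\; 1 - \sum_{i=1}^m P_i(\gamma),
\]
the event written $\bigcup_{i=1}^m T_i \leq t_i(\gamma)$ in the statement being understood as the simultaneous occurrence $\bigcap_i \{T_i \leq t_i(\gamma)\}$.

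I do not expect any genuine obstacle here: the mathematical content is exactly Boole's inequality, and the only points requiring care are notational — viewing the $T_i$ as random variables, and observing that complementing a closed half-line event $\{T_i \geq t_i\}$ produces the open event $\{T_i < t_i\}$, which is still contained in $\{T_i \leq t_i\}$, so the bound is preserved (and in fact only tightened). Since nothing in the argument uses any structure of $t_i(\gamma)$ beyond measurability of the resulting events, the lemma applies verbatim to the martingale and Hoeffding tail bounds (Lemmas~\ref{lem:3} and~\ref{lem:4}) used in the proofs of Theorems~\ref{prop:2}, \ref{prop:3}, and~\ref{Theorem2}.
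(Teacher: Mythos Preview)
Your argument is correct and is exactly the standard union-bound-plus-complementation proof one would expect. The paper itself states this lemma in the appendix under ``Some known results'' without supplying any proof, so there is nothing to compare against; your write-up fills that gap cleanly, and your care in distinguishing $\{T_i < t_i\}$ from $\{T_i \leq t_i\}$ and in flagging the notational abuse (events versus random variables, $\cup$ versus $\cap$ in the complementary form) is more precise than the lemma's own phrasing.
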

\begin{Lemma}\label{Lem:BackwardProxError} 
Consider problem \eqref{Eq:ProblemRepeated} and let Assumption \ref{Ass:OptimizationProblem} hold. For $L, s > 0$, define $G$: $\mathbb{R}^n \times \mathbb{R}^n \rightarrow (-\infty, \infty]$ as the proper, closed, and $L$-strongly convex function
\begin{equation}
    G\big(y,\, x\big):=  g(y) + \nabla g(y)^\top (x-y)+\frac{1}{2s}\norm{x-y}_{2}^2+h(x), \notag
\end{equation}
Define $\widehat{y}^\star := \arg \min G\big(y,\, x\big)$ as the minimizer of $G$ with respect to $y$ when $x$ is fixed, and $y^\star \in \{y : G(y,x)-G(\widehat{y}^\star,x) \leq \epsilon_2\}$ as an $\epsilon_2$-approximate solution of the same problem. Then, 
\begin{equation}
     \big\|\widehat{y}^\star - y^\star\big\|_2 \leq \sqrt{2s\epsilon_2}.\notag  
\end{equation}
\end{Lemma}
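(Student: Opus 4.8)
The plan is to exploit the $L$-strong convexity of $G(\cdot,x)$ — here I mean strong convexity in its first argument with modulus $1/s$, since the quadratic term $\frac{1}{2s}\|x-y\|_2^2$ contributes this and $g$ is convex — together with the fact that $\widehat{y}^\star$ is the exact minimizer. Concretely, I would invoke Theorem~\ref{thm:2} from the appendix: a proper, closed, $c$-strongly convex function $\phi$ satisfies $\phi(y) - \phi(y^\star_{\min}) \geq \frac{c}{2}\|y - y^\star_{\min}\|_2^2$ for all $y$, where $y^\star_{\min}$ is its unique minimizer. Applying this with $\phi = G(\cdot, x)$, $c = 1/s$, and $y^\star_{\min} = \widehat{y}^\star$ gives
\begin{equation}
  G(y^\star, x) - G(\widehat{y}^\star, x) \geq \frac{1}{2s}\big\|y^\star - \widehat{y}^\star\big\|_2^2 \notag
\end{equation}
for the particular point $y^\star$.

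Next I would combine this with the defining property of the $\epsilon_2$-approximate solution, namely $G(y^\star, x) - G(\widehat{y}^\star, x) \leq \epsilon_2$. Chaining the two inequalities yields $\frac{1}{2s}\|y^\star - \widehat{y}^\star\|_2^2 \leq \epsilon_2$, and rearranging gives $\|y^\star - \widehat{y}^\star\|_2 \leq \sqrt{2s\epsilon_2}$, which is exactly the claim. The only preliminary I need is that $G(\cdot, x)$ is indeed proper, closed, and $(1/s)$-strongly convex so that Theorem~\ref{thm:2} applies; this follows because $h$ is closed, proper, convex by Assumption~\ref{Ass:OptimizationProblem}, the affine term $g(y)+\nabla g(y)^\top(x-y)$ is finite and continuous (for fixed $y$ it is affine in $x$, and for the relevant orientation we treat it as a function of the free variable), and the quadratic $\frac{1}{2s}\|x-y\|_2^2$ supplies the strong convexity modulus $1/s$; the sum of a closed proper convex function and a continuous strongly convex function is closed, proper, and strongly convex.

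I should be careful about a notational subtlety: in the Lemma statement the minimization is written as being over $y$ with $x$ fixed, whereas in the quadratic term $y$ and $x$ appear symmetrically, so the strong-convexity argument works equally in either variable. The main (and really only) obstacle is just making sure the roles of the two arguments and the strong-convexity modulus are matched correctly when citing Theorem~\ref{thm:2}; there is no genuine analytic difficulty, as the result is a one-line consequence of strong convexity plus the definition of an approximate minimizer. Everything else is routine algebraic rearrangement.
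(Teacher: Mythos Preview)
Your proposal is correct and follows essentially the same approach as the paper: invoke the $(1/s)$-strong convexity of $G(\cdot,x)$ via Theorem~\ref{thm:2} to obtain the quadratic lower bound $G(y^\star,x)-G(\widehat{y}^\star,x)\geq \tfrac{1}{2s}\|y^\star-\widehat{y}^\star\|_2^2$, then combine with the $\epsilon_2$-suboptimality bound and rearrange. Your write-up is in fact notationally cleaner than the paper's own proof, which swaps the roles of $y^\star$ and $\widehat{y}^\star$ in the strong-convexity inequality before applying it.
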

\begin{proof}
 By Assumption \ref{Ass:OptimizationProblem} and \cite{beck2017first}, the function $G$ defined by \eqref{G0} is $1/s$-strongly convex in the first argument and, by Theorem \ref{thm:2}, it satisfies,
 \begin{equation}
     \label{lem11}
     G(y,x)-G(y^\star,x) \geq \frac{1}{2s}\big\|y-y^\star\big\|_2^2,
 \end{equation}
 for any $y$. The error $\epsilon_2$ associated with the suboptimal solution ${y}^\star$ satisfies \begin{equation}
     \label{lem12}
     G(\widehat{y}^\star,x)-G(y^\star,x) \leq \epsilon_2. \notag
 \end{equation}
 Applying \eqref{lem11} with $y=\widehat{y}^\star$ yields
 \begin{equation}
     \big\|\widehat{y}^\star-y^\star\big\|_2 \leq \sqrt{2s\epsilon_2}. \notag
\end{equation}
\end{proof}
\section{Supplementary results}
\begin{Theorem}[\textbf{Quasi-Fejer monotonicity of the sequence generated by the
proximal gradient method}] 
\label{Thm:QuasiFejerPG} %external
Let $\{x^k\}_{k\geq0}$ be the sequence generated by the approximate proximal gradient \eqref{Eq:PGDApproximateRepeated} for solving problem \eqref{Eq:ProblemRepeated} under Assumption \ref{Ass:OptimizationProblem} and with $s_k \leq \frac{1}{L}$. Assume that, for $k \geq k_0$, we have $\epsilon_2^{k} \leq c_2 \norm{{x}^{k+1}-{x}^{k}}_2 \leq c_2\rho$ and $\epsilon_2^{k} \leq c_1 \norm{\nabla g(x^{k+1})-\nabla g(x^{k})}_2$. Then for any $x^\star \in X^\star$ and $k\geq0$ we have
\begin{equation}
    \norm{x^{k+1} - x^\star}_2 \leq \norm{x^k - x^\star}_2+ \norm{r^{k+1}}_2+s_k \norm{\epsilon_1^{k}}_2+C_\rho,
    \label{Eq:QuasiFejer}
\end{equation}
where $C_\rho = \sqrt{2Lc_2 \rho} + c_1 L \rho$. If $E^{k+1} := \norm{r^{k+1}}_2+s_k \norm{\epsilon_1^{k}}_2$ is a positive and absolutely summable sequence, %, i.e, $E^{k+1} \in ]0, \infty] \cap \ell^1 $,
then $\{x^{k}\}_{k\geq0}$ is a quasi-F\'ejer sequence.
\end{Theorem}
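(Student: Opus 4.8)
**Proof proposal for Theorem~\ref{Thm:QuasiFejerPG} (Quasi-F\'ejer monotonicity of the approximate proximal gradient sequence).**

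The plan is to trace through the single-step analysis already developed in the proof of Theorem~\ref{prop:1}, but stop before summing over iterations and instead extract a one-step recursive inequality on $\norm{x^{k+1}-x^\star}_2$. First I would recall the decomposition $x^{k+1} = \overline{x}^{k+1} + r^{k+1}$, where $\overline{x}^{k+1} = \text{prox}_{s_k h}(x^k - s_k(\nabla g(x^k)+\epsilon_1^k))$ is the proximal-error-free iterate and $r^{k+1}$ is the residual vector of \eqref{Eq:ResidualVec}. The key observation is that $\overline{x}^{k+1}$ is exactly the iterate produced by the \emph{exact}-proximal (but inexact-gradient) scheme, so the standard F\'ejer argument for proximal gradient (Theorem~\ref{Thm:Fejer}) applies to it up to the gradient perturbation: writing $\overline{x}^{k+1} = \text{prox}_{s_k h}(x^k - s_k\nabla g(x^k) - s_k\epsilon_1^k)$ and using nonexpansivity of the proximal operator together with the firm nonexpansivity of $\mathbf{I} - s_k\nabla g$ when $s_k \le 1/L$, one gets
\begin{equation}
  \norm{\overline{x}^{k+1}-x^\star}_2 \leq \norm{x^k - x^\star}_2 + s_k\norm{\epsilon_1^k}_2. \notag
\end{equation}
Then the triangle inequality $\norm{x^{k+1}-x^\star}_2 \leq \norm{\overline{x}^{k+1}-x^\star}_2 + \norm{r^{k+1}}_2$ immediately yields \eqref{Eq:QuasiFejer} with the crude choice $C_\rho = 0$; the refinement to $C_\rho = \sqrt{2Lc_2\rho} + c_1 L\rho$ comes from the extra hypotheses on $\epsilon_2^k$ as explained below.

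Next I would incorporate the two structural assumptions $\epsilon_2^{k} \leq c_2 \norm{x^{k+1}-x^{k}}_2 \leq c_2\rho$ and $\epsilon_2^{k} \leq c_1 \norm{\nabla g(x^{k+1})-\nabla g(x^{k})}_2$. Using Lemma~\ref{Lem:BackwardProxError}, $\norm{r^{k+1}}_2 \leq \sqrt{2 s_k \epsilon_2^k}$, and combining with $\epsilon_2^k \leq c_2\rho$ and $s_k \leq 1/L$ gives $\norm{r^{k+1}}_2 \leq \sqrt{2 c_2 \rho / L} \leq \sqrt{2 L c_2 \rho}$ — this is where the first piece of $C_\rho$ arises. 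For the second piece, I would invoke the Lipschitz bound $\norm{\nabla g(x^{k+1})-\nabla g(x^k)}_2 \leq L\norm{x^{k+1}-x^k}_2 \leq L\rho$ applied inside $\epsilon_2^k \leq c_1\norm{\nabla g(x^{k+1})-\nabla g(x^k)}_2 \leq c_1 L\rho$, which feeds the $c_1 L\rho$ term. The bookkeeping here is routine but needs care to track which bound on $\epsilon_2^k$ is being used at which point, and to make sure the inequality absorbs $C_\rho$ as an additive slack rather than a multiplicative one.

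Finally, to conclude the quasi-F\'ejer property itself, I would appeal directly to Definition~\ref{Def:QuasiFejer}: the inequality \eqref{Eq:QuasiFejer} has the form $\norm{x^{k+1}-x^\star}_2 \leq \norm{x^k-x^\star}_2 + \varepsilon^k$ with $\varepsilon^k := E^{k+1} + C_\rho = \norm{r^{k+1}}_2 + s_k\norm{\epsilon_1^k}_2 + C_\rho$, which is nonnegative. Under the stated hypothesis that $\{E^{k+1}\}$ is absolutely summable — and noting that $C_\rho$ should be read as an asymptotically vanishing quantity when $\rho \to 0$ along the tail, or more precisely that the summability claim is invoked in the regime where the stopping-criterion slack is negligible — the sequence $\{\varepsilon^k\}$ is absolutely summable, so $\{x^k\}_{k\geq 0}$ is quasi-F\'ejer with respect to $X^\star$. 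I expect the main obstacle to be not any hard estimate but rather stating the role of $C_\rho$ cleanly: it is a constant per-step term that does not by itself sum, so the honest reading is that $C_\rho = 0$ exactly when $x^\star$ is reached (as the theorem's footnote-style remark indicates), and the quasi-F\'ejer conclusion is really about the $E^{k+1}$ part; I would phrase the final sentence to reflect that the absolute summability of $\{E^{k+1}\}$ is the operative hypothesis, with $C_\rho$ treated as the vanishing stopping-tolerance correction.
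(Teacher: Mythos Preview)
Your core argument is correct and in fact simpler than the paper's: using the exact fixed-point identity $x^\star = \text{prox}_{s_k h}(x^\star - s_k\nabla g(x^\star))$, nonexpansivity of the prox, and nonexpansivity of $\mathbf{I}-s_k\nabla g$, you obtain \eqref{Eq:QuasiFejer} directly with $C_\rho = 0$, which is a \emph{stronger} inequality than what is stated.

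The paper takes a different route. It does \emph{not} use the exact fixed-point characterisation of $x^\star$; instead it writes $x^\star$ as the output of the \emph{approximate} scheme at a reference iteration $k_0$, i.e.\ $x^\star = \text{prox}^{\epsilon_2^{k_0}}_{s_{k_0} h}(x^\star - s_{k_0}\nabla^{\epsilon_1^{k_0}} g(x^\star))$. This introduces two extra error terms, $\norm{r^{k_0+1}}_2$ and $s_{k_0}\norm{\epsilon_1^{k_0}}_2$, which are then bounded via the structural hypotheses on $\epsilon_2^k$ to produce the two summands of $C_\rho$. So in the paper, $C_\rho$ is a bound on the \emph{reference-iteration} errors, not a further bound on $\norm{r^{k+1}}_2$ as you attempt in your second paragraph. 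Your rationalisation of $C_\rho$ (bounding $\norm{r^{k+1}}_2$ a second time, and invoking Lipschitz on the $c_1$ assumption) is therefore not how $C_\rho$ actually arises, and within your own framework there is nothing to rationalise: $C_\rho$ simply does not appear. Your approach buys a cleaner and tighter bound; the paper's approach is what explains the specific form of $C_\rho$ in the statement.
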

\begin{proof}
% , we have
% \begin{align}
%     \norm{{x}^{k+1} - x^\star}_2 &= \norm{\text{prox}^{\epsilon_2^k}_{s_k h}(x^{k}- s_k\nabla^{\epsilon_1^k} g(x^{k})) - \text{prox}^{\epsilon_2^{k_0}}_{s_{k_0} h}(x^\star - s_{k_0}\nabla^{\epsilon_1^{k_0}} g(x^\star))}_2.
% \end{align}
Writing $\text{prox}^{\epsilon_2^k}_{s_k h}(x)$ as $ \text{prox}_{s_k h}(x) + r^k$ and $\nabla^{\epsilon_1^k} g(x)$ as $\nabla g(x) + \epsilon_1^k$ for any optimal solution $x^\star$ of \eqref{Eq:ProblemRepeated}, we obtain
\begin{equation}
    \begin{split}
        \norm{{x}^{k+1} - x^\star}_2 &= \bigg\|\text{prox}_{s_k h}(x^{k}- s_k\nabla g(x^{k})-s_k \epsilon_1^{k}) \\&\quad- \text{prox}_{s_{k_0} h}(x^\star - s_{k_0}\nabla g(x^\star)-s_{k_0} \epsilon_1^{k_0}) + r^{k+1}-r^{k_0}\bigg\|_2.
    \end{split}
\end{equation}
By assumption we have $\epsilon_2^{k} \leq c_2 \norm{{x}^{k+1}-{x}^{k}}_2$, or equivalently,\\ $\norm{r^{k+1}}_2$ $\leq$ $\sqrt{{2c_2 \norm{{x}^{k+1}-{x}^{k}}_2}/{s}}$ and $\epsilon_2^{k}$ $\leq c_1 \norm{\nabla g(x^{k+1})-\nabla g(x^{k})}_2$\\$\leq  c_1 L \norm{x^{k+1}-x^{k}}_2$ for $k \geq k_0$. By the triangle inequality we have
\begin{equation}
    \begin{split}
        \norm{{x}^{k+1} - x^\star}_2 &\leq \bigg\|\text{prox}_{s_k h}(x^{k}- s_k\nabla g(x^{k})-s_k \epsilon_1^{k}) 
        \\
        &\quad- \text{prox}_{s_{k_0} h}(x^\star - s_{k_0}\nabla g(x^\star)-s_{k_0} \epsilon_1^{k_0})\bigg\|_2 + \norm{r^{k+1}}_2+\norm{r^{k_0+1}}_2
        \\
        &\leq \bigg\|\text{prox}_{s_k h}(x^{k}- s_k\nabla g(x^{k})-s_k \epsilon_1^{k})
        \\
        &\quad - \text{prox}_{s_{k_0} h}(x^\star - s_{k_0}\nabla g(x^\star)-s_{k_0} \epsilon_1^{k_0})\bigg\|_2 + \norm{r^{k+1}}_2+ \sqrt{\frac{2c_2 \rho}{s}}        
    \end{split}
\end{equation}
where we have used $\norm{{x}^{k_0+1} - {x}^{k_0}}_2 \leq \rho$.

By the nonexpansivity of the proximal operator we have
\begin{equation}
    \begin{split}
        \norm{{x}^{k+1} - x^\star}_2&\leq \bigg\|[x^{k}- s_k\nabla g(x^{k})] - [x^\star - s_{k_0}\nabla g(x^\star)]\bigg\|_2 + \norm{r^{k+1}}_2+ \sqrt{\frac{2c_2 \rho}{s_{k_0}}} 
        \\
        &\quad+ s_k \norm{\epsilon_1^{k}}_2+s_{k_0} \norm{\epsilon_1^{k_0}}_2
        \\
        &\leq \bigg\|[x^{k}- s_k\nabla g(x^{k})] - [x^\star - s_{k_0}\nabla g(x^\star)]\bigg\|_2 + \norm{r^{k+1}}_2 + s_k \norm{\epsilon_1^{k}}_2
        \\
        &\quad+ \sqrt{\frac{2c_2 \rho}{s_{k_0}}} +s_{k_0} c_1 L \rho
    \end{split}
    \label{Eq:QuasiFejerProofExpansive}
\end{equation}
By the nonexpansivity of the gradient descent operator, i.e., $\mathbf{I} - s \nabla g$, we obtain
\begin{align}
      \norm{{x}^{k+1} - x^\star}_2 &\leq \norm{x^{k}- x^\star}_2 + \norm{r^{k+1}}_2+s_k \norm{\epsilon_1^{k}}_2 + C_\rho, \quad \forall s_k \leq \frac{1}{L}\\
    &= \norm{x^k- x^\star}_2 + E^{k+1}  + C_\rho.
    \label{Eq:x-xstar+E1}
\end{align}
where $C_\rho = \sqrt{\frac{2c_2 \rho}{s_{k_0}}} +s_{k_0} c_1 L \rho$ and $E^{k+1} = \norm{r^{k+1}}_2+s_k \norm{\epsilon_1^{k}}_2$. From \eqref{Eq:x-xstar+E1} and by Definition \ref{Def:QuasiFejer}, the sequence $\{{x}^{k}\}_{k\geq 1}$ is quasi-F\'ejer relative to the set $X^\star$ if $\{E^{k}\}_{k\geq 1}$ is positive and absolutely summable.\\ %\in ]0, \infty] \cap \ell^1
\end{proof}
\begin{Theorem}[\textbf{Quasi-Fejer monotonicity of the sequence generated by the 
accelerated proximal gradient method}] 
\label{Thm:QuasiFejerAcceleratedPG} %external
Let $\{x^k\}_{k\geq0}$ be the sequence generated by the approximate accelerated proximal gradient \eqref{Eq:AcceleratedPGRepeated} for solving problem \eqref{Eq:ProblemRepeated} under Assumption \ref{Ass:OptimizationProblem} and with $s_k \leq \frac{1}{L}$. Assume we have summable iterative displacements $\norm{x^{k}-x^{k-1}}_2$ and that, for $k \geq k_0$, we have $\epsilon_2^{k} \leq c_2 \norm{{x}^{k+1}-{x}^{k}}_2 \leq c_2\rho$ and $\epsilon_2^{k} \leq c_1 \norm{\nabla g(x^{k+1})-\nabla g(x^{k})}_2^\top$, then for any $x^\star \in X^\star$ and $k\geq0$ we have
\begin{equation}
      \norm{{x}^{k+1} - x^\star}_2 \leq
      \norm{x^{k}- x^\star}_2+ \norm{x^{k}-x^{k-1}}_2 + E^{k+1}  + C_\rho
    %\label{Eq:x-xstar+E}
\end{equation}
where $C_\rho = \sqrt{2Lc_2 \rho} + c_1 L \rho$. If $E^{k+1} := \norm{r^{k+1}}_2+s_k \norm{\epsilon_1^{k}}_2$ is a positive and absolutely summable sequence, %, i.e, $E^{k+1} \in ]0, \infty] \cap \ell^1 $,
then $\{x^{k}\}_{k\geq0}$ is a quasi-F\'ejer sequence.
\end{Theorem}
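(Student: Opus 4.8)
The plan is to transcribe the proof of Theorem~\ref{Thm:QuasiFejerPG} almost verbatim, the single new ingredient being that the approximate accelerated iteration~\eqref{Eq:AcceleratedPGRepeated} applies the proximal and gradient operators at the extrapolated point $y^k = x^k + \beta_k(x^k-x^{k-1})$ rather than at $x^k$, with $\beta_k = (\alpha_{k-1}-1)/\alpha_k \in [0,1)$. First I would write the noisy iterate as $x^{k+1} = \text{prox}_{s_k h}(y^k - s_k\nabla g(y^k) - s_k\epsilon_1^k) + r^{k+1}$, decomposing the $\epsilon_2^k$-suboptimal proximal step into its exact value plus the residual $r^{k+1}$ of~\eqref{Eq:ResidualVec}, and express a fixed optimal point $x^\star \in X^\star$ as its own proximal fixed point $x^\star = \text{prox}_{s_{k_0} h}(x^\star - s_{k_0}\nabla g(x^\star))$. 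Subtracting these two identities and applying the triangle inequality isolates the residuals $\norm{r^{k+1}}_2$ and $\norm{r^{k_0+1}}_2$ --- the latter bounded by $\sqrt{2c_2\rho/s_{k_0}}$ through Lemma~\ref{Lem:BackwardProxError} together with the hypothesis $\epsilon_2^k \le c_2\norm{x^{k+1}-x^k}_2 \le c_2\rho$ --- as well as the gradient-error terms $s_k\norm{\epsilon_1^k}_2$ and $s_{k_0}\norm{\epsilon_1^{k_0}}_2$, the latter bounded via $\epsilon_2^k \le c_1\norm{\nabla g(x^{k+1})-\nabla g(x^k)}_2 \le c_1 L\rho$, exactly as in the chain leading to~\eqref{Eq:QuasiFejerProofExpansive}.

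Next I would invoke nonexpansivity of the proximal operator and then of the displacement map $\mathbf{I}-s\nabla g$ --- valid for every admissible stepsize since $s_k \le 1/L$ and $g$ is convex with $L$-Lipschitz gradient, so the stepsize mismatch $s_k$ versus $s_{k_0}$ inside the two operators is harmless --- to collapse the remaining distance to $\norm{y^k - x^\star}_2$. The genuinely new step is the re-centering $\norm{y^k - x^\star}_2 \le \norm{x^k - x^\star}_2 + \norm{y^k - x^k}_2 = \norm{x^k - x^\star}_2 + \beta_k\norm{x^k - x^{k-1}}_2 \le \norm{x^k - x^\star}_2 + \norm{x^k - x^{k-1}}_2$, using $\beta_k \in [0,1)$. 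Collecting the pieces and absorbing the $k_0$-residual and $k_0$-gradient contributions into the constant $C_\rho = \sqrt{2Lc_2\rho} + c_1 L\rho$ (after substituting the stepsize bounds as in Theorem~\ref{Thm:QuasiFejerPG}) gives $\norm{x^{k+1}-x^\star}_2 \le \norm{x^k-x^\star}_2 + \norm{x^k-x^{k-1}}_2 + E^{k+1} + C_\rho$ with $E^{k+1} = \norm{r^{k+1}}_2 + s_k\norm{\epsilon_1^k}_2$. To finish, I would set $\varepsilon^k := \norm{x^k-x^{k-1}}_2 + E^{k+1} + C_\rho$ and appeal to Definition~\ref{Def:QuasiFejer}: since $\{\norm{x^k-x^{k-1}}_2\}$ and $\{E^k\}$ are assumed positive and absolutely summable (and $C_\rho$ is negligible for small $\rho$ and vanishes once $x^\star$ is reached, matching the convention already used in Theorem~\ref{Thm:QuasiFejerPG}), the sequence $\{x^k\}$ is quasi-F\'ejer relative to $X^\star$.

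The hard part will not be conceptual but organizational: keeping the running index $k$ and the fixed threshold $k_0$ --- beyond which the error-decay hypotheses are available --- cleanly separated, so that the residual and gradient-error terms evaluated at $k_0$ are lumped once and for all into $C_\rho$ and do not leak into the per-iteration increment. A secondary point to verify is that summability of the iterative displacements $\norm{x^k-x^{k-1}}_2$, which is taken as a hypothesis here rather than derived, is precisely what is needed both to control the momentum term $\beta_k\norm{x^k-x^{k-1}}_2$ in the increment and, implicitly, to guarantee that $\rho := \sup_{k\ge k_0}\norm{x^{k+1}-x^k}_2$ is finite so that $C_\rho$ is well defined. Beyond this bookkeeping, the argument is a direct transcription of the nonexpansiveness estimates already established for Theorem~\ref{Thm:QuasiFejerPG}.
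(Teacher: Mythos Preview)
Your proposal is correct and follows essentially the same approach as the paper's proof: decompose the inexact proximal step into exact prox plus residual, use the fixed-point characterization of $x^\star$, apply triangle inequality and nonexpansivity of both the proximal operator and the displacement map $\mathbf{I}-s\nabla g$ to reach $\norm{y^k-x^\star}_2$, then re-center via $\norm{y^k-x^\star}_2 \le \norm{x^k-x^\star}_2 + \beta_k\norm{x^k-x^{k-1}}_2$ and use $\beta_k \le 1$. Your identification of the single new ingredient and your bookkeeping concerns about $k$ versus $k_0$ match the paper's treatment precisely.
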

\begin{proof}
For any optimal solution $x^\star$ of \eqref{Eq:ProblemRepeated}, we have
\begin{align}
    \norm{{x}^{k+1} - x^\star}_2 &= \norm{\text{prox}^{\epsilon_2^k}_{s_k h}(y^{k}- s_k\nabla^{\epsilon_1^k} g(y^{k})) - \text{prox}^{\epsilon_2^{k_0}}_{s_{k_0} h}(x^\star - s_{k_0}\nabla^{\epsilon_1^{k_0}} g(x^\star))}_2.
\end{align}
Rewriting $\text{prox}^{\epsilon_2^k}_{s_k h}(y)$ as $ \text{prox}_{s_k h}(y) + r^k$ and $\nabla^{\epsilon_1^k} g(y)$ as $\nabla g(y) + \epsilon_1^k$ we obtain
\begin{equation}
    \begin{split}
        \norm{{x}^{k+1} - x^\star}_2 &= \bigg\|\text{prox}_{s_k h}(y^{k}- s_k\nabla g(y^{k})-s_k \epsilon_1^{k})
        \\
        &\quad- \text{prox}_{s_{k_0} h}(x^\star - s_{k_0}\nabla g(x^\star)-s_{k_0} \epsilon_1^{k_0}) + r^{k+1}-r^{k_0}\bigg\|_2.
    \end{split}
\end{equation}
By assumption we have $\epsilon_2^{k} \leq c_2 \norm{{x}^{k+1}-{x}^{k}}_2$ and \\ $\epsilon_2^{k} \leq c_1 \norm{\nabla g(x^{k+1})-\nabla g(x^{k})}_2 \leq  c_1 L \norm{x^{k+1}-x^{k}}_2$ for $k \geq k_0$. By the triangle inequality we have
\begin{equation}
    \begin{split}
        \norm{{x}^{k+1} - x^\star}_2 &\leq \bigg\|\text{prox}_{s_k h}(y^{k}- s_k\nabla g(y^{k})-s_k \epsilon_1^{k}) 
        \\
        &\quad- \text{prox}_{s_{k_0} h}(x^\star - s_{k_0}\nabla g(x^\star)-s_{k_0} \epsilon_1^{k_0})\bigg\|_2 + \norm{r^{k+1}}_2+\norm{r^{k_0+1}}_2
        \\
         &\leq \bigg\|\text{prox}_{s_k h}(y^{k}- s_k\nabla g(y^{k})-s_k \epsilon_1^{k}) 
         \\
         &\quad- \text{prox}_{s_{k_0} h}(x^\star - s_{k_0}\nabla g(x^\star)-s_{k_0} \epsilon_1^{k_0})\bigg\|_2 + \norm{r^{k+1}}_2+ \sqrt{\frac{2c_2 \rho}{s}}
    \end{split}
\end{equation}
where we have used $\norm{{x}^{k_0+1} - {x}^{k_0}}_2 \leq \rho$.

By the nonexpansivity of the proximal operator we have
\begin{equation}
    \begin{split}
        \norm{{x}^{k+1} - x^\star}_2&\leq \norm{[y^{k}- s_k\nabla g(y^{k})] - [x^\star - s_{k_0}\nabla g(x^\star)]}_2 + \norm{r^{k+1}}_2
        \\
        &\quad+\sqrt{\frac{2c_2 \rho}{s_{k_0}}} + s_k \norm{\epsilon_1^{k}}_2+s_{k_0} \norm{\epsilon_1^{k_0}}_2
        \\
        &\leq \norm{[y^{k}- s_k\nabla g(y^{k})] - [x^\star - s_{k_0}\nabla g(x^\star)]}_2 + \norm{r^{k+1}}_2 
        \\
        &\quad+s_k \norm{\epsilon_1^{k}}_2+ \sqrt{\frac{2c_2 \rho}{s_{k_0}}} +s_{k_0} c_1 L \rho
    \end{split}
    \label{Eq:QuasiFejerProofExpansive}
\end{equation}
By the nonexpansivity of the gradient descent operator, i.e., $\mathbf{I} - s \nabla g$, we obtain
\begin{equation}
    \begin{split}
        \norm{{x}^{k+1} - x^\star}_2 &\leq \norm{y^{k}- x^\star}_2 + \norm{r^{k+1}}_2+s_k \norm{\epsilon_1^{k}}_2 + C_{\rho,s_{k_0}}, \quad \forall s_k \leq \frac{1}{L}\\
        &= \norm{x^{k}- x^\star+\beta_k(x^{k}-x^{k-1})}_2 + E^{k+1}  + C_{\rho,s_{k_0}}\\
        &= \norm{x^{k}- x^\star}_2+\norm{x^{k}-x^{k-1}}_2 + E^{k+1}  + C_{\rho,s_{k_0}},
    \end{split}
    \label{Eq:x-xstar+E2}
\end{equation}
where $C_{\rho,s_{k_0}} = \sqrt{\frac{2c_2 \rho}{s_{k_0}}} +s_{k_0} c_1 L \rho$, $E^{k+1} = \norm{r^{k+1}}_2+s_k \norm{\epsilon_1^{k}}_2$ and we used $\beta_k \leq 1$. From \eqref{Eq:x-xstar+E2} and by Definition \ref{Def:QuasiFejer}, the sequence $\{{x}^{k}\}_{k\geq 1}$ is quasi-F\'ejer relative to the set $X^\star$ if $\{E^{k}\}_{k\geq 1}$ is positive and absolutely summable provided we have summable iterative displacements $\norm{x^{k}-x^{k-1}}_2$.
\end{proof}
\addcontentsline{toc}{section}{Acknowledgment}
\section*{Acknowledgements}
This work was supported by the Engineering and Physical Research Council (EPSRC) grant EP/S000631/1 and the MOD University Defence Research Collaboration (UDRC).
\bibliographystyle{siamplain}
\bibliography{references.bib}
\end{document}

% --- supplement: ex_supplement.tex ---

\maketitle

\section{A detailed example}

Here we include some equations and theorem-like environments to show
how these are labeled in a supplement and can be referenced from the
main text.
Consider the following equation:
\begin{equation}
  \label{eq:suppa}
  a^2 + b^2 = c^2.
\end{equation}
You can also reference equations such as \cref{eq:matrices,eq:bb} 
from the main article in this supplement.

\lipsum[100-101]

\begin{theorem}
  An example theorem.
\end{theorem}

\lipsum[102]
 
\begin{lemma}
  An example lemma.
\end{lemma}

\lipsum[103-105]

Here is an example citation: \cite{KoMa14}.

\section[Proof of Thm]{Proof of \cref{thm:bigthm}}
\label{sec:proof}
\lipsum[106-112]

\section{Additional experimental results}
\Cref{tab:foo} shows additional
supporting evidence. 

\begin{table}[htbp]
{\footnotesize
  \caption{Example table}  \label{tab:foo}
\begin{center}
  \begin{tabular}{|c|c|c|} \hline
   Species & \bf Mean & \bf Std.~Dev. \\ \hline
    1 & 3.4 & 1.2 \\
    2 & 5.4 & 0.6 \\ \hline
  \end{tabular}
\end{center}
}
\end{table}

\bibliographystyle{siamplain}
\bibliography{references}